\documentclass[12pt, a4paper, reqno, twoside, makeidx]{amsart}
\usepackage[margin=2.8cm, marginpar=3cm]{geometry}

\usepackage{verbatim}
\usepackage{amsmath}
\usepackage{amsfonts}
\usepackage{amssymb}
\usepackage{graphicx}
\usepackage{color}
\usepackage{empheq}
\usepackage[font={small,it}]{caption}
\usepackage{xypic} 
\usepackage[all]{xy}
\usepackage{tikz-cd}
\usepackage[autostyle]{csquotes}
\usepackage[makeroom]{cancel}

\usepackage{tikz}
\usetikzlibrary{calc}


\def\spinc{\text{spin}^c}

\usepackage{amssymb}

\usepackage{tikz}
\usetikzlibrary{calc}

\theoremstyle{remark}

\theoremstyle{plain}
\newtheorem{thm}{Theorem}[section]
\newtheorem{lem}[thm]{Lemma}
\newtheorem{prop}[thm]{Proposition}
\newtheorem{defi}[thm]{Definition}
\newtheorem{cor}[thm]{Corollary}

\newtheorem{exa}[thm]{Example}
\newtheorem{rem}{Remark}[equation]

\theoremstyle{definition}
\newtheorem{rmk}[thm]{Remark}

\newcommand{\Z}{\mathbb{Z}}
\newcommand{\Q}{\mathbb{Q}}
\newcommand{\R}{\mathbb{R}}
\newcommand{\C}{\mathbb{C}}
\newcommand{\Kt}{\text{tHFK}^-}

\newcommand{\Tt}{\mathbb{T}}
\newcommand{\F}{\mathbb{F}}

\newcommand{\CP}{\mathbb{C} P}
\newcommand{\CF}{\mathbb{C} \mathbb{F}}

\newcommand{\x}{\mathbf{x}}
\newcommand{\y}{\mathbf{y}}
\newcommand{\gr}{\text{gr}}
\newcommand{\boa}{\boldsymbol{\alpha}}
\newcommand{\bob}{\boldsymbol{\beta}}
\newcommand{\bog}{\boldsymbol{\gamma}}
\newcommand{\boe}{\boldsymbol{\eta}}
\newcommand{\bod}{\boldsymbol{\delta}}
\newcommand{\s}{\mathfrak{s}}

\newcommand{\Sym}{\text{Sym}}
\newcommand{\T}{\mathbb{T}}

\newcommand{\Spinc}{\text{Spin}^c}
\newcommand{\dimm}{\text{dim}}

\newcommand{\Char}{\text{Char}}

\newcommand{\W}{q}

\begin{document}
\title[]{Deformations of lattice cohomology \\and the upsilon invariant}
\author{Antonio Alfieri}
\begin{abstract}We introduce deformations of lattice cohomology corresponding to the knot homologies found by Ozsv\' ath, Stipsicz and Szab\' o in \cite{OSS4}. By means of holomorphic triangles counting, we prove equivalence with the analytic theory for a wide class of knots. This yields combinatorial formulae for the upsilon invariant.
\end{abstract}
\maketitle
\thispagestyle{empty}
\section{Introduction}
Based on the methods used by Floer \cite{floer} in Symplectic Topology to study the intersection properties of  Lagrangian submanifolds, in \cite{OS2,OS4} Ozsv\' ath and Szab\' o introduced a package of three-manifold invariants called Heegaard Floer homology. This eventually led to the definition of Knot Floer homology \cite{OS7,Ras1}, a related package of knot invariants.

In the last two decades Knot Floer homology proved to be an extremely powerful tool for the study of knots in $S^3$ \cite{tau,surgery,YN}. In particular, in the realm of knot concordance, invariants like the upsilon invariant $\Upsilon_K(t)$ introduced by Ozsv\'ath, Stipsicz, and  Szab\'o \cite{OSS4} turned out to be extremely efficient to decide certain questions about independence in the knot concordance group $\mathcal{C}$.

In \cite{upsiloncovers}, extending the definition of the upsilon invariant to knots inside rational homology spheres, new invariants for knots in $S^3$ were introduced. Indeed, given a knot $K\subset S^3$ one can consider its branched double cover $\Sigma(K)$. This is a rational homology sphere carrying a unique spin structure $\s_0$. By considering the pull-back $\widetilde{K} \subset \Sigma(K)$ of $K$ to $\Sigma(K)$ we get a null-homologous knot $\widetilde{K} \subset \Sigma(K)$. One can see that the upsilon invariant $\Upsilon_{K, \s_0}(t)$ of $(\Sigma(K), \widetilde{K})$ with respect to $\s_0$ yields a knot concordance invariant of $K \subset S^3$. 

Further invariants carrying information about the concordance type of $K$ can be obtained by considering the invariants $\Upsilon_{K, \s}(t)$ of $\widetilde{K}$ associated to the other $\Spinc$ structures of the double branched cover $\Sigma(K)$. More specifically in \cite{upsiloncovers} the following theorem, reminiscent of the results of Grigsby, Ruberman, and Strle \cite{grigsby2008knot},  was proved.      

\begin{thm}[Alfieri, Celoria \& Stipsicz]
If $K$ is a slice knot then there exists a subgroup $G < H^2(\Sigma(K); \Z)$ of cardinality
$\sqrt{|H^2(\Sigma(K), \Z)|}$ such that $\Upsilon_{K,\s_0+\xi}(t)=0$ for all $\xi\in G$.  
\end{thm}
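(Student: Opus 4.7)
The plan is to adapt the metabolizer argument of Grigsby, Ruberman, and Strle, used there for the $d$-invariants, to the upsilon invariants. Since $K$ is slice it bounds a smooth properly embedded disk $D\subset B^4$. The class of $D$ in $H_2(B^4,S^3;\Z/2)$ is automatically trivial, so the double cover $W=\Sigma_2(B^4,D)$ of $B^4$ branched along $D$ is well defined; it is a smooth $4$-manifold with $\partial W=\Sigma(K)$, and the preimage $\widetilde D\subset W$ of $D$ is a properly embedded disk with $\partial\widetilde D=\widetilde K$. A standard Mayer--Vietoris/transfer argument then shows that $W$ is a rational homology $4$-ball.

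The subgroup $G$ comes from the usual metabolizer construction for rational homology balls. The long exact sequence of the pair $(W,\Sigma(K))$ combined with Poincaré--Lefschetz duality implies that the kernel $M$ of $H_1(\Sigma(K);\Z)\to H_1(W;\Z)$ satisfies $|M|^2=|H_1(\Sigma(K);\Z)|$. Under Poincaré duality $H_1(\Sigma(K);\Z)\cong H^2(\Sigma(K);\Z)$ the subgroup $M$ corresponds to the image $G$ of the restriction $H^2(W;\Z)\to H^2(\Sigma(K);\Z)$, which therefore has cardinality $\sqrt{|H^2(\Sigma(K);\Z)|}$. Because $W$ is spin and its spin structure restricts to the unique spin structure on $\Sigma(K)$, the $\Spinc$ structure $\s_0$ extends to $W$; since the set of $\Spinc$ structures on $\Sigma(K)$ extending to $W$ is a torsor over the image of $H^2(W;\Z)\to H^2(\Sigma(K);\Z)$, this set is precisely $\s_0+G$.

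Finally, I would show that $\Upsilon_{K,\s_0+\xi}(t)=0$ for every $\xi\in G$. Removing a small open ball centred at an interior point of $\widetilde D$ converts $W$ into a cobordism $W^\circ$ from $S^3$ to $\Sigma(K)$, and turns $\widetilde D$ into an annulus $A$ cobounding the unknot $U\subset S^3$ and $\widetilde K\subset\Sigma(K)$. Because $W$ is a $\Q$-homology ball, $W^\circ$ is a $\Q$-homology cobordism and $(W^\circ,A)$ is a genus-zero concordance carrying any $\Spinc$ extension of $\s_0+\xi$. Invariance of $\Upsilon$ under such decorated $\Q$-homology concordances would then yield $\Upsilon_{K,\s_0+\xi}(t)=\Upsilon_{U}(t)=0$. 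The main obstacle is precisely this concordance invariance step: one must verify that the cobordism maps induced by $(W^\circ,A)$ on the $t$-modified knot Floer complexes of Ozsváth--Stipsicz--Szabó respect the filtrations defining $\Upsilon$, extending the concordance-invariance arguments of \cite{OSS4} beyond the classical $S^3$ setting to null-homologous knots in rational homology spheres decorated by non-spin $\Spinc$ structures.
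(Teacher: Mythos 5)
Your proposal is the standard metabolizer argument, and it is essentially correct. Note, however, that the present paper does not itself prove this theorem; it is cited from \cite{upsiloncovers} as motivating background.

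Regarding the step you flag as the ``main obstacle'': this is not actually a gap. The $\Spinc$ rational homology concordance invariance of $\Upsilon_{K,\s}(t)$ is established in \cite[Proposition 4.1]{upsiloncovers}, and the present paper records this fact explicitly in the bullet-point proposition of Section~\ref{analytictheory}. So your conditional argument closes. Alternatively, you can sidestep the concordance-invariance discussion entirely by using Zemke's inequality (the paper's Theorem~\ref{Zemkeinequality}) applied to the genus-zero cobordism $(W^\circ,A)$ from $(S^3,U,\s_{S^3})$ to $(\Sigma(K),\widetilde K,\s_0+\xi)$ and to its orientation reversal. Since $W^\circ$ is a $\Q$-homology cobordism one has $b_1(W^\circ)=b_2^+(W^\circ)=b_2(W^\circ)=0$, hence $c_1(\mathfrak t)^2=0$ and $[A]^2=0$, and $g(A)=0$, so the inequality reads $\Upsilon_{\widetilde K,\s_0+\xi}(t)\geq \Upsilon_U(t)=0$; reversing the cobordism gives the opposite inequality, whence equality. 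This route uses only what is stated in the paper and avoids appealing to the broader concordance machinery.

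One small point to tighten: you assert $W$ is spin. This is true because for a knot (as opposed to a link) $\Sigma(K)$ is a $\Z/2$-homology sphere, $W$ is a $\Z/2$-homology ball, and so $H^2(W;\Z/2)=0$ forces $w_2(W)=0$; it is worth making this explicit, since in the link case this would fail. With that noted, your identification of $\s_0+G$ as the set of $\Spinc$ structures on $\Sigma(K)$ extending over $W$, and of $G$ as the image of $H^2(W;\Z)\to H^2(\Sigma(K);\Z)$ with $|G|^2=|H^2(\Sigma(K);\Z)|$, is the standard half-lives-half-dies argument and is fine.
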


In \cite{upsiloncovers} we performed  computations of the invariants $\Upsilon_{K,\s_0+\xi}(t)$, $\xi \in H^2(\Sigma^m(K), \Z)$ for some families of knots having genus one doubly-pointed Heegaard diagrams. These are known as $(1,1)$ knots, and were first studied by Rasmussen \cite{ras11}.      

In this paper we address the issue of computations in the case of graph knots. In what follows a \textit{graph knot} is  a knot that can be described by means of a pluming tree $\Gamma$ with one unframed vertex $v_0$, see Figure \ref{trefoil} below. Note that the knot of an irreducible plane curve singularity $f(x,y)=0$ is a graph knot. Indeed, so is its lift to the branched double cover $\Sigma(K)$. (See Proposition \ref{procedure} below.) The study of graph knots was initiated by Ozsv\' ath, Stipsicz, and Szab\' o in \cite{Lspaceknots}.

\begin{thm}\label{maingoal} Let $K$ be a null-homologous graph knot associated to a plumbing $\Gamma$ with unframed vertex $v_0$. Suppose  that $G=\Gamma-v_0$ is a negative-definite plumbing tree with at most two bad vertices. Let $\s$ be a $\Spinc$ structure of $Y(G)$ and $k$ a characteristic vector of the intersection form of the associated plumbing of spheres $X(G)$ representing the $\spinc$ structure $\s$ on the boundary. Then  
\[\Upsilon_{K, \s}(t)=  -2 \min_{x\in \Z^s} \chi_t(x)+\left( \frac{k^2+ |G|}{4} - t \ \frac{k\cdot F-F^2}{2} \right) \ , \]
where $\chi_t$ denotes the twisted Riemann-Roch function  
\[\chi_t(x)=-\frac{1}{2}\left( (k+tv_0^*) \cdot x + x^2 \right)\ ,\]
and $F\in H_2(X(G), \Q)$ a homology class representing $-v_0^*\in H^2(X(G), \Z)$.
\end{thm}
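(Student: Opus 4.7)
The plan is to combine the equivalence between the deformed lattice cohomology and the $t$-modified knot Floer theory of Ozsv\'ath, Stipsicz and Szab\'o (the main technical result of this paper) with an explicit computation of the upsilon invariant inside the lattice model.

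First, I would invoke this equivalence to identify, for a null-homologous graph knot $K$ associated to a plumbing $\Gamma$ whose complement $G = \Gamma - v_0$ is negative-definite with at most two bad vertices, the $t$-modified knot Floer complex of $(Y(G), K)$ in $\Spinc$ structure $\s$ with the deformed lattice complex of $G$. This is exactly the range in which the Ozsv\'ath-Szab\'o and N\'emethi correspondence between $HF^-$ and lattice cohomology is known, and the knot-theoretic enhancement is set up by letting the unframed vertex $v_0$ carry the Alexander filtration via the twisted Riemann-Roch function $\chi_t$. The equivalence is proved by holomorphic triangle counts, as announced in the abstract.

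Next I would read $\Upsilon_{K,\s}(t)$ directly off the deformed lattice model. By its defining property, $\Upsilon$ records the maximal grading of a non-torsion homogeneous cycle in the $t$-modified complex. In the lattice description the tower of $U$-non-torsion classes is generated by the constant cochain, and its bottom sits at the vertex $x \in \Z^{|G|}$ minimizing $\chi_t$; this produces the intrinsic contribution $-2\min_x \chi_t(x)$. The two correction terms then account for the absolute grading normalizations: the summand $\frac{k^2 + |G|}{4}$ is the standard $d$-invariant shift of $HF^-(Y(G), \s)$, while $-t \cdot \frac{k \cdot F - F^2}{2}$ is the Alexander grading of the distinguished generator relative to the rational Seifert class $F$, obtained from the usual formula $A = \frac{1}{2}(c_1(\s) \cdot F + F^2)$ after using that $F$ represents $-v_0^*$.

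The main obstacle will be the equivalence step in the two bad vertex case: the holomorphic triangle counts have to be arranged compatibly with the Alexander filtration induced by $v_0^*$, generalizing the Ozsv\'ath-Szab\'o-N\'emethi arguments from $HF^-$ to the $t$-modified knot theory. Once this is in place, the theorem reduces to combining the combinatorial minimization of $\chi_t$ with the bookkeeping of the two grading corrections above.
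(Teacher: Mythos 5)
Your overall strategy matches the paper's: build the deformed lattice model, read off $\Upsilon_{\Gamma,\s}(t)$ combinatorially as in the displayed formula (this is the content of Lemma~\ref{goal}, using that the non-torsion tower of $t\mathbb{HFK}_*(\Gamma,\s)$ lives in lattice grading zero and is generated by vertices, so the maximum of $\gr_t$ over $\Z^s$ is achieved at a minimizer of $\chi_t$), and then identify the lattice model with the analytic theory via holomorphic triangle counting and the surgery exact triangle. Your bookkeeping of the two absolute-grading shifts $\frac{k^2+|G|}{4}$ and $-t\frac{k\cdot F - F^2}{2}$ is essentially correct.

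However, you misidentify the actual obstacle in the two-bad-vertex case. You attribute the difficulty to "arranging holomorphic triangle counts compatibly with the Alexander filtration," but that compatibility issue is present already for type-$0$ and type-$1$ knots and is handled uniformly by the $\Spinc$-refined triangle map $F_{\Gamma,k}$. The real issue with two bad vertices is purely algebraic: the inductive argument on the surgery exact triangle requires the top row of the comparison diagram to be short exact, which requires a vanishing of the higher-dimensional reduced lattice group $t\mathbb{HFK}_{\text{red},p}$ that is only available for $p\ge 1$ when there is at most one bad point (Corollary~\ref{badpoints}), exactly as in the $HF^-$ story. The paper circumvents this by introducing a relative $\Z/2\Z$-grading, but this grading is only well-defined when $t = 2a/(2b+1)$ is rational with odd denominator; for these values one proves $tHFK^-_{\text{even}}(Y(G),K) \cong t\mathbb{HFK}^-_0(\Gamma)$ (using that the connecting map in the surgery triangle is odd, hence must vanish on the even summand when the target's even part is zero). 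The statement for all $t\in[0,2]$ is then recovered by density of that set of rationals and continuity of $\Upsilon$. This $\Z/2\Z$-grading reduction plus the density/continuity step is the essential new idea for two bad points, and it is entirely missing from your proposal; without it your inductive scheme would stall at one bad vertex.
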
 

This leads to related formulae for the $\tau$-invariants introduced by Grigsby, Ruberman, and Strle \cite{grigsby2008knot}. These are related to the $\Upsilon$-invariant via the identity $\tau_{\s}(K)= -\lim_{t\to 0^+}\Upsilon_{K, \s}(t)/t$.

The proof of Theorem \ref{maingoal} we outline presently is an adaptation of the argument presented in \cite{OSGraphManifolds}. (A similar type of work was carried on in \cite{instantons} where the same technique was employed to perform computations in the setting of Instanton Floer homology.) There are two main ingredients. The first is a surgery exact triangle involving the ''$t$-modified'' knot homologies introduced by Ozsv\'ath, Stipsicz, and  Szab\'o in \cite{OSS4}. Compare with the knot Floer exact triangle of Ozsv\' ath and Szab\' o \cite[Theorem 8.2]{OS7}. 

\begin{thm}\label{exactanalyticthm} 
Let $K \subset Y$ be a knot in a rational homology sphere, and $C \subset Y$  a framed knot in its complement. Let $\lambda$ denotes the framing of $C$, and $\mu$ its meridian. Suppose that  the surgery three-manifolds $Y_\lambda(C)$ and $Y_{\lambda+\mu}(C)$ are rational homology spheres. Then there is an exact triangle
\begin{equation}\label{exactanalytic}
\begin{tikzcd}[column sep=small]
 \Kt(Y,K) \arrow{rr} &     & \Kt(Y_\lambda(C), K) \arrow{ld} \\
& \Kt(Y_{\lambda+\mu}(C), K)  \arrow{lu}   & 
\end{tikzcd} \ .
\end{equation}
\end{thm}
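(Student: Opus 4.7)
The plan is to adapt the standard Ozsv\'ath--Szab\'o proof of the knot Floer surgery triangle \cite[Theorem 8.2]{OS7} to the $t$-modified setting of \cite{OSS4}. The analytic input (counts of pseudo-holomorphic triangles and rectangles on a four-tuple of attaching curves) is identical to the classical case; the only new content is keeping track of the fractional $t$-weights attached to the two basepoints.

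\textbf{Triple diagram.} First I would fix an admissible multi-pointed Heegaard diagram $(\Sigma, \boa, \bob, w, z)$ for $(Y,K)$ in which one of the $\beta$-curves, say $\beta_1$, is a small meridian of $C$ and is disjoint from the framing curve $\lambda$ for $C$ drawn on $\Sigma$. Replacing $\beta_1$ by a curve $\gamma_1$ parallel to $\lambda$ produces a Heegaard tuple $\bog$ for which $(\Sigma,\boa,\bog,w,z)$ represents $(Y_\lambda(C),K)$; replacing $\gamma_1$ further by a curve $\delta_1$ parallel to $\lambda+\mu$ produces $\bod$ with $(\Sigma,\boa,\bod,w,z)$ representing $(Y_{\lambda+\mu}(C),K)$. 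All remaining curves in $\bog$ and $\bod$ are small Hamiltonian translates of those in $\bob$, so the three auxiliary diagrams $(\bob,\bog)$, $(\bog,\bod)$, $(\bob,\bod)$ are handleslide-equivalent to standard diagrams for $\#^g(S^1\times S^2)$.

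\textbf{Definition of the maps.} The three arrows in \eqref{exactanalytic} are defined by counts of $t$-modified holomorphic triangles: for each triple $(\boa,\bob,\bog)$, $(\boa,\bog,\bod)$, $(\boa,\bod,\bob)$ a triangle class $\psi$ contributes to the second factor the weight $v^{t\cdot n_w(\psi)+(2-t)\cdot n_z(\psi)}$ (the weight used to define $\Kt$ in \cite{OSS4}), paired with the distinguished top-graded generator $\Theta$ of the Floer complex of the corresponding auxiliary pair. Since $Y$, $Y_\lambda(C)$, and $Y_{\lambda+\mu}(C)$ are all rational homology spheres, the appropriate admissibility conditions can be arranged and each weighted sum is finite on every graded piece.

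\textbf{Exactness.} Consecutive compositions are shown to be null-homotopic by the usual quadrilateral argument. The chain homotopy is given by counting $t$-modified pseudo-holomorphic rectangles with corners on $(\boa,\bob,\bog,\bod)$; its differential produces, up to the desired composition, a count of degenerations of these rectangles into two triangles joined along a generator of $CF(\bob,\bod)$. Because $\bod$ is obtained from $\bob$ by a handleslide across $\gamma_1$, the triangle product sending $\Theta_{\bob\bog}\otimes\Theta_{\bog\bod}$ to $\Theta_{\bob\bod}$ is computed by the model calculation on $\#^g(S^1\times S^2)$ and carries over verbatim to the $t$-weighted setting because the relevant small triangles stay away from both basepoints. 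A standard homological algebra lemma then upgrades these null-homotopies, together with the obvious cyclic symmetry of the construction, to an exact triangle.

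\textbf{Main obstacle.} The principal point requiring care is purely algebraic: I must check that the finite-dimensionality of each graded piece of $\Kt$, which in \cite{OSS4} depends on admissibility plus the rational-homology-sphere hypothesis, is preserved under formation of the triple and quadruple diagrams used here, so that the $t$-weighted triangle and rectangle counts define bona fide chain maps and chain homotopies over the $t$-modified coefficient ring. Once this bookkeeping is in place the result follows because the geometric content (admissibility, transversality, Gromov compactness, signed counts of ends of moduli spaces of quadrilaterals) is inherited unchanged from the proof of \cite[Theorem 8.2]{OS7}.
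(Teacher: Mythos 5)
Your overall skeleton matches the paper's argument: a Heegaard quadruple $(\Sigma,\boa,\bob,\bog,\bod)$ with the triangle maps defined by weighting counts by $\W^{t n_z + (2-t) n_w}$, rectangle counts giving the null-homotopies, the $A_\infty$-relations, and a homological algebra lemma to conclude. But two points require correction.

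First, you write that the triangle product sends $\Theta_{\bob\bog}\otimes\Theta_{\bog\bod}$ to $\Theta_{\bob\bod}$. That is not what is needed; the crucial model calculation, going back to the original exact triangle argument, is that $\Theta_{\bob\bog}\cdot\Theta_{\bog\bod}=0$, because the small holomorphic triangles come in cancelling pairs (a neck-stretching/degeneration argument). If the product were the top generator rather than zero, the composition $F_{\bog\bod}\circ F_{\bob\bog}$ would not be null-homotopic as defined; the rectangle map $H$ only accounts for the correction away from $x \cdot (\Theta_{\bob\bog}\cdot\Theta_{\bog\bod})$.

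Second, and more importantly, you under-specify the exactness step. The ``standard homological algebra lemma'' (the paper's Lemma \ref{algebra}) requires, in addition to the null-homotopies, that the maps $\psi_i = f_{i+2}\circ H_i + H_{i+1}\circ f_i$ be quasi-isomorphisms. Invoking ``cyclic symmetry'' does not produce this; it is a genuine analytic input coming from writing the $A_\infty$-relation for $k=5$ (pseudo-holomorphic pentagons) after introducing Hamiltonian translates $\bob^{(i)},\bog^{(i)},\bod^{(i)}$, which identifies $\psi_i$ (up to homotopy) with a map given by multiplication by a pentagon count. One then must argue that this is a quasi-isomorphism over $\mathcal{R}$, which is not automatic from the $\widehat{CFK}$ case. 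The paper handles this by specializing $q=0$, observing that one recovers the hat-flavored complexes (with the $\gr_t$-grading), where Ozsv\'ath--Szab\'o's original proof shows $\widehat{\psi}_i$ is a quasi-isomorphism, and then bootstrapping to $\mathcal{R}$ using the fact that $\mathcal{R}$-module structure is controlled by its reduction modulo $\W$. Your ``main obstacle'' paragraph instead worries about finiteness and admissibility, which is not the delicate point here: all three background manifolds are $\Q$HS$^3$'s, so admissibility is standard. The delicate step you're missing is precisely verifying the quasi-isomorphism hypothesis in the $t$-modified theory.
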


Secondly, in the spirit of \cite{Nemethi1,OSS1}, we introduce a combinatorial invariant associated to algebraic knots. This is a one-parameter family of knot homologies $t\mathbb{HFK}_*(\Gamma)$ with the same formal structure as $t\text{HFK}(Y,K)$. In Section \ref{combinatorialexactsequence} we establish a long exact sequence playing the role of the surgery exact triangle in the combinatorial theory.

\begin{thm}\label{latticeexact}There is a long exact sequence of modules
\[ \xymatrix{
 \ar[r] & t\mathbb{HFK}_p(\Gamma-v) \ar[r]^{\ \ \  \phi_*  } &  t\mathbb{HFK}_p(\Gamma) \ar[r]^{\psi_* \ \ \ }  & t\mathbb{HFK}_p(\Gamma_{+1}(v))  \ar[r]^{\ \  \delta} & t\mathbb{HFK}_{p-1}(\Gamma) \ar[r] &} \]
\end{thm}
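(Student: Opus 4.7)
The plan is to derive the long exact sequence from a short exact sequence of chain complexes
\[0 \longrightarrow t\mathbb{CFK}_*(\Gamma-v) \xrightarrow{\phi} t\mathbb{CFK}_*(\Gamma) \xrightarrow{\psi} t\mathbb{CFK}_*(\Gamma_{+1}(v)) \longrightarrow 0\]
at the lattice level, from which the stated long exact sequence follows by the standard zig-zag construction. This mirrors the surgery exact triangle for lattice cohomology used by Ozsv\'ath and Szab\'o in \cite{OSGraphManifolds}, and more generally the strategy N\'emethi employs in his treatment of lattice cohomology of negative-definite plumbings.

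Writing $t\mathbb{CFK}_*(\Gamma)$ for the chain-level model of $t\mathbb{HFK}_*(\Gamma)$ introduced in Section~\ref{combinatorialexactsequence}, with generators indexed by lattice points in $H_2(X(G),\Z)\cong\Z^s$ and differential encoded by the twisted Riemann-Roch function $\chi_t$, I would construct $\phi$ via the inclusion of lattices $\Z^{s-1}\hookrightarrow\Z^s$ placing a zero in the $v$-coordinate. This inclusion respects $\chi_t$ since the intersection form of $X(G-v)$ is the restriction of that of $X(G)$ to the hyperplane $\{x_v=0\}$, and (since $v\neq v_0$) the linear term $-\tfrac{t}{2}\,v_0^*\cdot\x$ restricts correctly as well. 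For $\psi$, I would identify $t\mathbb{CFK}_*(\Gamma_{+1}(v))$ with the quotient $t\mathbb{CFK}_*(\Gamma)/\operatorname{im}\phi$: the intersection form of $X(G_{+1}(v))$ differs from that of $X(G)$ only in the diagonal entry at $v$, and this modification is precisely what re-indexes the quotient as the chain complex $t\mathbb{CFK}_*(\Gamma_{+1}(v))$.

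The main technical obstacle will be verifying that $\phi$ and $\psi$ commute with the twisted differential $\partial_t$. In the undeformed setting this amounts to a formal Riemann-Roch computation, but here one must carefully track the contributions of the cubical edges in the $v$-coordinate direction and match them with the corresponding boundary contributions on $\Gamma_{+1}(v)$, all while confirming that the extra linear term contributed by the unframed vertex $v_0$ is preserved under both maps. Once the chain-map identities are established, exactness of the short exact sequence at each lattice point is essentially immediate from the construction, and the passage to homology via the zig-zag lemma produces the stated long exact sequence. The connecting homomorphism $\delta$ automatically has homological degree $-1$, as required.
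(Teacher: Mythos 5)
Your high-level strategy --- build a short exact sequence of chain complexes and apply the zig-zag lemma --- matches the paper. But the maps you propose are not the ones that exist, and the claim that exactness at the chain level is essentially automatic is the step that actually carries all the weight.

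Concretely, the map $\phi$ (called $A_t=\psi_v$ in the paper) is \emph{not} the hyperplane inclusion $\Z^{s-1}\hookrightarrow\Z^s$; it is the ``sum over all extensions of characteristic vectors'' map $\psi_v[K,E]=\sum_{p\equiv v^2\ (\mathrm{mod}\ 2)}[K,p,E]$, which spreads a generator out over the entire $v$-axis rather than inserting a zero. Relatedly, $t\mathbb{CFK}_*(\Gamma_{+1}(v))$ cannot be the quotient $t\mathbb{CFK}_*(\Gamma)/\mathrm{im}\,\phi$: changing the $v$-th diagonal entry of the intersection form does not change the underlying lattice $\Z^s$, so both $\Gamma$ and $\Gamma_{+1}(v)$ have a full $\Z^s$ worth of generators, while the quotient you propose throws away an entire hyperplane's worth and produces the wrong object. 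The paper instead constructs $B_t$ as a composition $P_t\circ\psi_e$ through the blown-up graph $\Gamma'(v)$ (add a $(-1)$-framed vertex $e$ attached to $v$): $\psi_e$ again spreads over all extensions, and $P_t$ encodes a blow-down that shifts the $v$-coordinate by the $e$-coordinate, producing the $+1$-twist of the framing.

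Finally, the exactness of the resulting five-term complex is \emph{not} ``essentially immediate.'' The paper explicitly warns that a direct verification is lengthy and delicate, and proves it indirectly: one checks $B_t\circ A_t=0$ by an explicit cancellation of the infinite sums in pairs $(p,m)\leftrightarrow(p+4m-2,-m+1)$, then specializes $q=0$ to reduce to the known exactness in the undeformed setting of \cite[Proposition~6.6]{OSS1}, and finally lifts vanishing of the five-term homology from $\F$ back to $\mathcal{R}$ via the Universal Coefficients Theorem together with the structure theorem for finitely generated $\mathcal{R}$-modules (every such module decomposes into cyclic pieces $\mathcal{R}/q^\alpha$, and $\mathcal{R}/q^\alpha\otimes_{\mathcal{R}}\F\ne 0$). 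Without some substitute for this argument, the exactness of your short exact sequence remains unproved, and that is the core of the theorem.
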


Our main result is then obtained by comparing the two exact triangles as in \cite{OSGraphManifolds}. Note that the combinatorial theory we develop here builds on the work carried on in \cite{Lspaceknots} by Ozsv\' ath, Stipsicz, and Szab\' o.  

  

\vspace{0.3cm}
\begin{footnotesize}
\textbf{Acknowledgements} 
\textit{I would like to thank Peter Ozsv\' ath for showing interest in this projects while in its early stages, and for some useful advice. I would also like to thank  Andr\' as Stipsicz, Andr\' as Juhasz, Ian Zemke, Daniele Celoria, Andr\' as N\' emethi and Liam Watson for useful conversations. Most of this work was carried on in the summer of 2018 when I  was partially supported by the NKFIH Grant \' Elvonal (Frontier) KKP 126683 and K112735.}
\end{footnotesize}

\section{Embedded resolutions of curves and branched double coverings}
In what follows we will deal with knots in rational homology spheres. As a consequence we will adopt the following terminology.

\begin{defi} A knot is a pair $(Y, K)$ where $Y$ is a smooth closed three-manifold and $K$ is the image of a $C^\infty$ embedding $S^1 \hookrightarrow Y$.
\end{defi} 

A knot $(Y, K)$ is called \textit{null-homologous} if $[K]=0$ in $H_1(Y; \Z)$. This is the same as asking that $K$ has a Seifert surface, \text{i.e.} that there exists a surface with boundary $\Sigma\subset Y$ such that $\partial \Sigma=K$.  
Most of the time in what follows we will deal with null-homologous knots $(Y,K)$ where the three manifolds $Y$ is a rational homology sphere, that is $H_*(Y, \Q)\simeq H_*(S^3,\Q)$.  Furthermore, knots and three-manifolds are always assumed to be \textit{oriented}. Note that in a rational homology sphere knots are guaranteed to be \textit{rationally null-homologous}, that is $[K]=0$ in $H_1(Y; \Q)$.      

Two knots $(Y_0,K_0)$ and $(Y_1,K_1)$ are called \textit{rationally homology concordant} if there is a rational homology cobordism $X:Y_0 \to Y_1$ containing a smoothly embedded cylinder $C \subset X$ such that $\partial C= C \cap \partial X = K_0 \cup-K_1$. If $Y_0$ and $Y_1$ are equipped with $\spinc$ structures, say $\s_0$ and $\s_1$, and these extends over $X$ we say that $(Y_0,K_0, \s_0)$ and $(Y_1,K_1, \s_1)$ are $\Spinc$ rationally homology concordant.

There are basically two ways one can use to represent a knot $(Y,K)$:
\begin{enumerate}
\item via a \textit{mixed diagram} that is a pair $(L,K)$ where $L$ represents a framed link $L$ in the three-sphere $S^3$, and  $K$ a knot lying in the link complement $S^3-L$, 
\item or via a \textit{doubly-pointed Heegaard diagram} that is a Heegaard diagram 
\[(\Sigma, \{\alpha_1, \dots, \alpha_g\},\{\beta_1, \dots, \beta_g\})\] 
together with a pair of base points $z, w\in \Sigma-\alpha_1- \dots- \alpha_g-\beta_1- \dots- \beta_g$ lying in the complement of the $\alpha$- and the $\beta$-curves \cite{OS7}.
\end{enumerate} 

\subsection{Algebraic knots} Let $(C, 0) \subset (\C^2, 0)$ be the germ of an irreducible plane curve singularity. By looking at the intersection of $C \subset \C^2$ with a small sphere $S_\epsilon(0)\subset \C^2$ centred at the origin of the axes we get a knot $(S^3, K)$. Knots of this kind are usually called \textit{algebraic knots}. 

The topology of an algebraic knot $(S^3, K)=(S_\epsilon(0), S_\epsilon(0)\cap C)$ can be understood by means of an embedded resolution of the curve singularity $(C,0)\subset (\C^2, 0)$. By this we mean a complex map $\rho: \C^2 \# n\overline{\CP^2} \to \C^2$ such that:
\begin{itemize}
\item $\rho$ defines an isomorphism $\C^2 \# n\overline{\CP^2} \setminus \rho^{-1}(0) \to \C^2\setminus \{0\} $ away from the origin,  
\item  the \textit{exceptional divisor},
\[E:=\rho^{-1}(C)=E_1 \cup \dots \cup E_n \cup \widetilde{C} \ ,\]
is an algebraic curve with smooth components $\{E_1 , \dots , E_n,\widetilde{C} \}$. Furthermore, $E_i \simeq \CP^1$, $i\in \{1, \dots, n\}$, and $\widetilde{C}=\overline{\rho^{-1}(C\setminus\{0\})}$   
\item no three components of the exceptional divisor pass through the same point, 
\item the exceptional divisor $E$ has only normal crossing singularities, that is: the intersection of two components  of $E$ is locally modelled on \[\{(x,y)\in \C^2 \ | \ x^{m}y^{l}=0\}\] for some $m,l\geq 1$ (the \textit{multiplicities}). 
\end{itemize} 
The exceptional divisor $E$ of an embedded resolution $\C^2 \# n\overline{\CP^2} \to \C^2$ is encoded in the so called \textit{resolution graph}. This is the graph $\Gamma$ having as vertices the irreducible components of $E$ and an edge connecting  each pair of intersecting components. Note that the resolution graph comes with a distinguished vertex (the one corresponding to the proper transform $\widetilde{C}$ of the curve $C$) and integers $e_1,\dots, e_n$ labelling the other vertices. These are defined as the self intersections $e_i=E_i \cdot E_i$ of the curves $E_1 , \dots , E_n$ in the blow-up $\C^2 \# n\overline{\CP^2}$. 

\begin{exa}The curve $C=\{(x,y) \in \C^2 \ : \ x^2+y^3=0\}$ has an isolated singularity at the origin $0\in \C^2$. After  three consecutive blow-ups \cite[Example 7.2.3 (a)]{thebook} we get an embedded resolution $\C^2 \# 3\overline{\CP^2} \to \C^2$ with graph: 
\[ \ \ \ \ \ 
\xygraph{ 
!{<0cm,0cm>;<1cm,0cm>:<0cm,1cm>::}
!~-{@{-}@[|(2.5)]}
!{(0,0) }*+{\bullet}="x"
!{(-1.5,0) }*+{\bullet}="a1"
!{(1.5,0) }*+{\bullet}="c1"
!{(0,-1.5) }*+{\bullet}="b1"
!{(0,0.5) }*+{E_3}
!{(-1.8,0.5) }*+{E_1}
!{(1.8,0.5) }*+{E_2}
!{(0.5,-1.3) }*+{\widetilde{C}}
"x"-"c1"
"x"-"a1"
"x"-"b1"
} \  .
\]  
Here $E_1^2=-3, E_2^2=-2$, and $E_3^2=-1$. Furthermore the curves $E_1, E_2$, and $E_3$ have multiplicity $m_1=2, m_2=3$ and $m_3=6$ respectively.
\end{exa}

Note that once we erase the unframed vertex from the embedded resolution graph of a curve singularity we get a negative-definite plumbing tree representing $S^3$. (This is because an embedded resolution $\C^2 \# n\overline{\CP^2} \to \C^2$ is in particular a resolution of the trivial surface singularity $(\C^2,0)$.)

The resolution graph $\Gamma$  gives rise to a surgery diagram representing the algebraic knot $(S^3,K)$ associated to the curve singularity $(C,0)\subset (\C^2, 0)$. 
Indeed, given any rooted tree $(\Gamma, v_0)$ and a weight assignment $m: \Gamma \setminus \{v_0\} \to\Z$, we can look at the plumbed three-manifold $Y(G)$ associated to $G=\Gamma \setminus \{v_0\}$ and consider the knot  $(Y(G), K)$ represented by the unframed vertex as in Figure \ref{trefoil}. This gives rise to an interesting class of knots.

\begin{figure}[t]
\begin{center}
\includegraphics[scale=0.8]{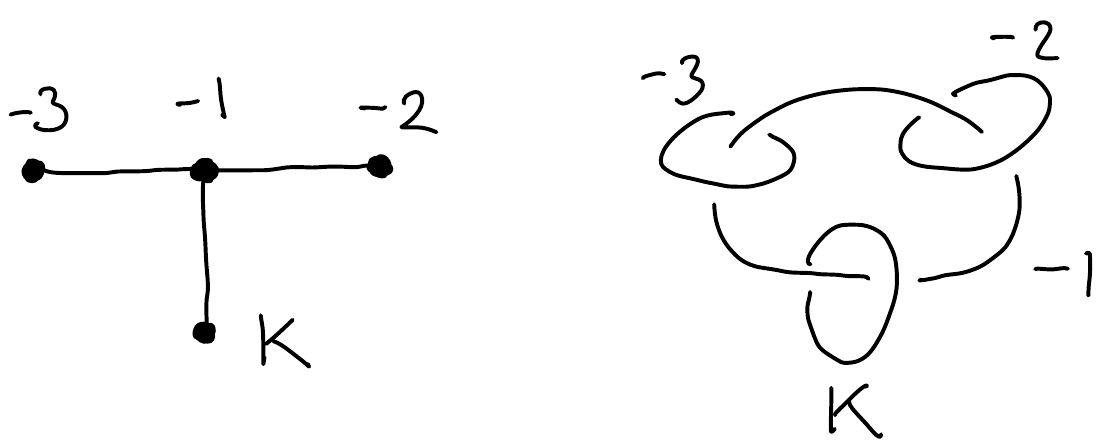}
\caption{\label{trefoil} A surgery diagram of the trefoil knot.}
\end{center}
\end{figure}

\begin{defi}
A knot $(Y, K)$ that can be presented by means of a plumbing tree $\Gamma$ with one unframed vertex $v_0$ is called a graph knot.    
\end{defi}

\begin{exa} Let $(X, 0) \subset \C^N$ be a complex surface singularity and $(C,0) \subset (X,0)$ be a complex curve singularity. Then $(Y,K)=(S_\epsilon(0) \cap X, S_\epsilon(0) \cap C)$ is an algebraic knot in the sense of this new definition. 
\end{exa} 

\begin{exa}Let $(Y_1, K_1)$ and $(Y_2, K_2)$ are algebraic knots with plumbing diagrams $\Gamma_1$ and $\Gamma_2$. Then the connected sum $(Y_1\# Y_2, K_1\#K_2)$ is an algebraic knot. Indeed $\Gamma=\Gamma_1 * \Gamma_2$, the graph obtained joining $\Gamma_1$ and $\Gamma_2$ along their unframed vertices, gives rise to a diagram for $(Y_1\# Y_2, K_1\#K_2)$.
\end{exa}

\subsection{Branched coverings} A knot $(S^3, K)$ gives rise to a knot in a rational homology sphere $(\Sigma(K), \widetilde{K})$ where the ground three-manifold is given by the branched double cover $\Sigma(K)$ of $S^3$ along $K$, and $\widetilde{K}=\text{Fix}(\tau)$ by the fixed point set $\text{Fix}(\tau)$ of the covering involution $\tau: \Sigma(K) \to \Sigma(K)$.  

\begin{prop} \label{procedure}The double branched cover $(\Sigma(K), \widetilde{K})$ of an algebraic knot $(S^3, K)$ associated to a plane curve singularity $(C, 0) \subset (\C^2, 0)$ is algebraic.
\end{prop}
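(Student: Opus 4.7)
The plan is to realize $\Sigma(K)$ explicitly as the link of an isolated complex surface singularity, inside which the lifted curve appears as a curve singularity; the proposition then follows immediately from the generalized notion of algebraic knot introduced in the example preceding the statement.

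Let $f \in \mathbb{C}\{x,y\}$ be a defining equation for the germ $(C,0)\subset (\mathbb{C}^2,0)$, so that $K = S_\epsilon(0)\cap C$ where $C = \{f=0\}$. I would first consider the hypersurface
\[
X \;=\; \{(x,y,z)\in \mathbb{C}^3 : z^2 = f(x,y)\},
\]
together with the projection $\pi \from X \to \mathbb{C}^2$, $(x,y,z)\mapsto (x,y)$, which is a branched double cover of $\mathbb{C}^2$ ramified along $C$. Since $(C,0)$ is an isolated curve singularity, $(X,0)$ is an isolated surface singularity in $\mathbb{C}^3$. The first key step is to observe that for small enough $\epsilon>0$, the restriction of $\pi$ to $X\cap B_\epsilon(0)$ realizes a topological double cover of $B_\epsilon(0)\subset \mathbb{C}^2$ branched along $C$, and in particular $\pi$ sends the link $S_\epsilon^5(0)\cap X$ homeomorphically (as branched covers) to $S^3$ branched along $K$. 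By uniqueness of the branched double cover this gives a canonical identification
\[
\Sigma(K) \;\cong\; S_\epsilon^5(0)\cap X,
\]
with the covering involution $\tau$ corresponding to $(x,y,z)\mapsto (x,y,-z)$.

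Next I would identify the lifted knot $\widetilde{K} = \mathrm{Fix}(\tau)$. The fixed point set of the involution $(x,y,z)\mapsto (x,y,-z)$ on $X$ is exactly the curve $\widetilde{C} := X\cap\{z=0\} = \{(x,y,0) : f(x,y)=0\}$, which is (via $\pi$) isomorphic to $(C,0)$ as a curve germ and sits inside $(X,0)$ as a complex curve singularity. Intersecting with $S_\epsilon^5(0)$ yields
\[
\widetilde{K} \;=\; S_\epsilon^5(0)\cap \widetilde{C}.
\]
Thus the pair $(\Sigma(K),\widetilde{K})$ is realized as $(S_\epsilon(0)\cap X,\, S_\epsilon(0)\cap \widetilde{C})$, i.e.\ as the link of the curve singularity $(\widetilde{C},0)\subset (X,0)$ inside the surface singularity $(X,0)\subset (\mathbb{C}^3,0)$. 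By the example preceding the proposition, this is an algebraic knot in the generalized sense, completing the proof.

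The only subtle point — and the one place I would be careful — is verifying that the branched cover $S_\epsilon^5(0)\cap X \to S^3$ produced by $\pi$ really is \emph{the} double branched cover $\Sigma(K)$, not some other cover with the same ramification. This reduces to checking that the complement $X\setminus\widetilde{C} \to \mathbb{C}^2\setminus C$ is a connected unramified double cover (so is classified by the nontrivial element of $H^1(\mathbb{C}^2\setminus C;\mathbb{Z}/2)$, which corresponds to the linking number mod $2$ with $C$), and then invoking uniqueness of connected double covers with prescribed monodromy. All the other steps are essentially formal once $X$ has been written down.
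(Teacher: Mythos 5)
Your proof is correct, but it takes a genuinely different route from the paper. You realize $(\Sigma(K),\widetilde{K})$ abstractly as the link of the curve germ $\{z=0\}\cap X$ inside the surface singularity $X=\{z^2=f(x,y)\}\subset(\C^3,0)$, and then delegate the conclusion to the preceding Example (links of curve germs inside surface singularities are graph knots). That Example is stated without proof and rests on the standard existence of embedded resolutions for such pairs, so your reduction is legitimate — but note it is essentially an appeal to the same singularity-theory machinery that Proposition~\ref{procedure} is meant to make concrete. The paper instead works constructively: it starts from an embedded resolution $\rho\from\C^2\#n\overline{\CP}^2\to\C^2$ of $(C,0)$, pulls it back to a resolution $\widetilde{S}\to X$ by taking the double cover branched along the Weil divisor $\widetilde{C}+\sum m_i E_i$, and reads off the plumbing tree $\Gamma$ for $(\Sigma(K),\widetilde{K})$ from the adjacency graph of the lifted curve configuration (including the lift of the strict transform $\widetilde{C}$, which becomes the unframed vertex). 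Both arguments prove the same statement; the paper's buys an explicit algorithm for producing $\Gamma$ from the resolution graph of $(C,0)$, which is then used directly in the trefoil computation that follows and is what Theorem~\ref{maingoal} ultimately needs as input, whereas your argument is shorter but only yields existence. The caution you raise about identifying $S^5_\epsilon\cap X\to S^3$ with the canonical double branched cover is well placed and handled correctly via monodromy of the unramified part.
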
    
\begin{proof} Let $ f(x,y)=0$ be an equation for $C\subset \C^2$, and $\rho: \C^2 \# n\overline{\CP^2} \to \C^2$ denote an embedded resolution. Furthermore, let $\rho^{-1}(0)=E_1 \cup \dots \cup E_n \cup \widetilde{C}=E$ denote the exceptional divisor of the embedded resolution.

First we note that $\phi=f\circ \rho$ defines an equation for $E$. This assigns multiplicities $m_1, \dots, m_n \geq 1$ to the rational components of the exceptional divisor. By looking at the branched double cover of $\C^2 \# n\overline{\CP^2}$ along the Weil divisor  $D=\widetilde{C} + \sum_{i=1}^n m_i E_i$ (see \cite[pp. 239-241]{thebook} ) we obtain a smooth surface $\widetilde{S}$ representing a resolution of the surface singularity $(S,0)$ with equation $z^2=f(x,y)$.

The various components of the exceptional divisor $E \subset \C^2 \# n\overline{\CP^2}$ lift to $\widetilde{S}$ as explained in \cite[p. 252]{thebook}  and form a configuration of curves inside $\widetilde{S}$. (Note that for the purposes of singularity theory one usually ignores the pull-back of the strict transform $\widetilde{C}$ to $\widetilde{S}$, while locating the latter here plays a crucial role.) The adjacency graph of this configuration gives rise to a tree $\Gamma$ describing $(\Sigma(K), \widetilde{K})$.   
\end{proof}

\begin{exa}Following the procedure outlined in the proof of Proposition \ref{procedure} we get a diagram for the branched double cover of the resolution graph of the trefoil knot: 
\[ \ \ \ \ \ 
\xygraph{ 
!{<0cm,0cm>;<1cm,0cm>:<0cm,1cm>::}
!~-{@{-}@[|(2.5)]}
!{(0,1.5) }*+{\bullet}="d1"
!{(0,0) }*+{\bullet}="x"
!{(-1.5,0) }*+{\bullet}="a1"
!{(1.5,0) }*+{\bullet}="c1"
!{(0,-1.5) }*+{\bullet}="b1"
!{(0.5,0.5) }*+{-2}
!{(0.5,1.5) }*+{-1}
!{(-1.5,0.5) }*+{-3}
!{(1.8,0.5) }*+{-3}
!{(0.5,-1.3) }*+{K}
"x"-"c1"
"x"-"a1"
"x"-"b1"
"x"-"d1"
} \  .
\]  
This represents a knot in the lens space $L(3,2)$:
\[
\xygraph{ 
!{<0cm,0cm>;<1cm,0cm>:<0cm,1cm>::}
!~-{@{-}@[|(2.5)]}
!{(0,1.5) }*+{\bullet}="d1"
!{(0,0) }*+{\bullet}="x"
!{(-1.5,0) }*+{\bullet}="a1"
!{(1.5,0) }*+{\bullet}="c1"
!{(0.5,1.5) }*+{-1}
!{(0.5,0.5) }*+{-2}
!{(-1.5,0.5) }*+{-3}
!{(1.5,0.5) }*+{-3}
"x"-"c1"
"x"-"a1"
"x"-"d1"
} =
\xygraph{ 
!{<0cm,0cm>;<1cm,0cm>:<0cm,1cm>::}
!~-{@{-}@[|(2.5)]}
!{(0,0) }*+{\bullet}="x"
!{(-1.5,0) }*+{\bullet}="a1"
!{(1.5,0) }*+{\bullet}="c1"
!{(0,0.5) }*+{-1}
!{(-1.5,0.5) }*+{-3}
!{(1.5,0.5) }*+{-3}
"x"-"c1"
"x"-"a1"
} =
\xygraph{ 
!{<0cm,0cm>;<1cm,0cm>:<0cm,1cm>::}
!~-{@{-}@[|(2.5)]}
!{(0,0) }*+{\bullet}="x"
!{(-1.5,0) }*+{\bullet}="a1"
!{(0,0.5) }*+{-2}
!{(-1.5,0.5) }*+{-2}
"x"-"a1"
} = L(3,2) \ .
\]  

Similar computations can be run for all torus knots starting from the defining equation $x^p+y^q=0$.
\end{exa} 

\section{$t$-Modified knot Floer homology, and the upsilon invariant of knots in rational homology spheres}\label{analytictheory}
In \cite{OSS4} Ozsv\'ath, Stipsicz and Szab\' o introduced a one-parameter family of knot homologies $\Kt(K)$ giving rise to  knot invariants of knots in $S^3$. In what follows we go through the straightforward generalisation of their construction taking into account knots in rational homology spheres.

\subsection{Notation}In what follows we will work over the ring $\mathcal{R}$ of long power series with $\F$ coefficients. This is the commutative ring of infinite formal sums  $\sum_{\alpha \in A} \W^\alpha$, with $A \subset \R_{\geq0}$ well-ordered. One defines:
\[\left( \sum_{\alpha \in A} \W^\alpha \right) + \left( \sum_{\beta \in B} \W^\beta \right)= \sum_{\gamma \in A\cup B} \W^\gamma   \]
and 
\[\left( \sum_{\alpha \in A} \W^\alpha \right) \cdot \left( \sum_{\beta \in B} \W^\beta \right)= \sum_{\gamma \in A+ B} c_\gamma \cdot \W^\gamma  \ , \]
where $A+B=\{\alpha+ \beta \ | \ \alpha \in A, \ \beta \in B\} \subset \R_{\geq 0}$ and 
\[c_\gamma=\#\left\{(\alpha, \beta)\in A \times B \ | \ \alpha+\beta=\gamma \right\} \ \mod 2  . \]
The ring $\mathcal{R}$ has the fundamental property that every finitely generated $\mathcal{R}$-module $M$ is sum of cyclic modules  \cite[Section 11]{Brandal} \text{i.e.}
\[ M \simeq \mathcal{R}^k \oplus \mathcal{R}/f_1 \oplus \dots \mathcal{R}/f_m \]
for some $f_1 , \dots , f_m \in \mathcal{R}$, and $k \geq 0$ (the \textit{rank} of $M$). Notice that the field of fraction of the ring $\mathcal{R}$ is given by 
\[ \mathcal{R}^*= \left\{\left.\sum_{\alpha \in A} \W^\alpha \ \right| \ A \subset \R \text{ well-ordered} \right\} \ ,\]
and that the rank of a finitely generated $\mathcal{R}$-module $M$ equals the dimension of  $M_{\mathcal{R}^*}= M \otimes_\mathcal{R} \mathcal{R}^*$ as an $\mathcal{R}^*$-vector space. 

We will think $\mathcal{R}$ as a graded ring, with $\text{deg }\W=-1$. Note that  $\F[U]\hookrightarrow \mathcal{R}$ via the identification $U=\W^2$. 

\subsection{$t$-Modified Knot homologies}
Let $Y$ be a rational homology sphere. Recall \cite{OS7} that a knot $(Y,K)$ can be represented by a doubly-pointed Heegaard diagram $(\Sigma, \boa, \bob, z, w)$. In the symmetric product $\Sym^g(\Sigma)$, the space of degree $g$ divisors over the genus $g$ Riemann surface $\Sigma$, this specifies two half-dimensional, totally-real submanifolds $\T_{\alpha}= \alpha_1 \times \dots \times \alpha_g$ and $\T_\beta= \beta_1 \times \dots \times \beta_g$, and two analytic submanifolds $V_z=\{ z\} \times \Sym^{g-1}(\Sigma)$ and  $V_w=\{ w\} \times \Sym^{g-1}(\Sigma)$ of complex codimension one. We define
\[ CF(\Tt_\alpha, \Tt_\beta)= \bigoplus_{\x \in \Tt_\alpha \cap \Tt_\beta} \mathcal{R} \cdot \x \ .\]   
Given two intersection points $\x$ and $\y \in \Tt_\alpha \cap \Tt_\beta$ consider the set $\pi_2(\x,\y)$ of homotopy classes of topological disks $u: D^2\simeq [0,1] \times\R \to \Sym^g(\Sigma)$  such that 
\begin{itemize}
\item  $u \left( 0\times \R \right) \subseteq \T_{\boa}$ and $u \left(  1 \times \R \right) \subseteq \T_{\bob}$,
\item  $\lim_{t\to - \infty } u(s+it)= \x$ and  $\lim_{t \to + \infty } u(s+it)= \y$.
\end{itemize}
For a generic choice of a path of almost-complex structures $J_s$ we can look at the moduli space $\mathcal{M}(\phi)$ of solutions of the (perturbed) Cauchy-Riemann equation
\begin{equation} \label{cauchyriemann}
\frac{\partial u}{\partial s} (s,t) + J_s \left( \frac{\partial u }{ \partial t} (s,t) \right) = 0
\end{equation} 
within a given homotopy class $\phi \in \pi_2(\x,\y)$ as it was done in \cite{OS2}. It turns out that if we restrict our attention to those classes with Maslov index $\mu(\phi)=1$ then $\mathcal{M}(\phi)$ is a \textit{finite} collection of lines (Gromov's Compactness Theorem). In \cite[Section 4]{OS2} this led to the definition of a differential 
\[\partial \x=\sum_{\y \in \Tt_\alpha \cap \Tt_\beta} \sum_{ \substack{ \phi \in \pi_2(\x , \y) \\    \mu( \phi ) =1  }}\#\left|\frac{\mathcal{M}(\phi)}{\R}\right| \ \W^{2n_z(\phi)} \cdot \y  \]
turning $CF(\Tt_\alpha, \Tt_\beta)$ into a chain complex. Here $n_z(\phi)=\#|\phi(D^2)\cap V_z|$ denotes the intersction with the divisor $V_z$. Notice that the differential of $CF(\Tt_\alpha, \Tt_\beta)$ completely ignores the base point $w$. In fact, the chain homotopy type of $CF(\Tt_\alpha, \Tt_\beta)$ only provides an invariant of the background three-manifold \cite[Theorem 1.1]{OS2}. 

In order to take into account the base point $w$ and hence the knot $K \subset Y$, we can use the following differential 
\[\partial_t \x=\sum_{\y \in \Tt_\alpha \cap \Tt_\beta} \sum_{ \substack{ \phi \in \pi_2(\x , \y) \\    \mu( \phi ) =1  }}\#\left|\frac{\mathcal{M}(\phi)}{\R}\right| \ \W^{tn_z(\phi)+(2-t)n_w(\phi)} \cdot \y \ \ \  \ \ \text{for } t \in [0,2] \ ,  \]
also recording the intersection with the divisor $V_w$. We will denote by $\Kt(Y,K)$ the homology of the resulting chain group  $CF_t(\Tt_\alpha, \Tt_\beta)=(CF(\Tt_\alpha, \Tt_\beta), \partial_t)$.

\subsection{$\Spinc$ refinement}  
In \cite[Section 2.6]{OS2} Ozsv\' ath and Szab\' o built a map $\s_z: \Tt_\alpha \cap \Tt_\beta \to \Spinc(Y)$ associating to an intersection point a $\Spinc$ structure of $Y$. Define
$CF_t(\Tt_\alpha, \Tt_\beta, \s)=\bigoplus_{\substack{  \s_z(\x)=\s}} \mathcal{R} \cdot \x \ .$
Since \cite[Lemma 2.19]{OS2} for any pair of intersection points $\pi_2(\x, \y)$ is non-empty iff $\s_z(\x)=\s_z(\y)$,  we conclude that $CF_t(\Tt_\alpha, \Tt_\beta, \s)$ is a sub-complex of  $CF_t(\Tt_\alpha, \Tt_\beta)$. We set $\Kt(Y, K, \s)=H_*(CF_t(\Tt_\alpha, \Tt_\beta,\s))$. 
 
\subsection{Gradings} Attached to an intersection point $\x \in \Tt_\alpha \cap \Tt_\beta$ of a doubly pointed Heegaard diagram there are two rational numbers: the Alexander grading $A(\x)$ and the Maslov grading $M(\x)$. These have the property that 
\[A(\x)-A(\y)=n_w(\phi) - n_z(\phi) \ \ \  \text{ and } \ \ \ M(\x)-M(\y)= \mu(\phi) - 2n_z(\phi) \]
where $\phi \in \pi_2(\x, \y)$ is disk connecting $\x$ to $\y$. We define a real-valued grading $\gr_t$ on $CF_t(\Tt_\alpha, \Tt_\beta)$ via the formula $\gr_t(\x)=M(\x)-tA(\x)$. Note that $\partial_t$ drops the grading by one \cite[Lemma 3.3]{OSS4}. We set
\[\Upsilon_{K ,\s}(t)= \max \{ \gr_t(\xi) \ | \ \xi \in \Kt(Y,K, \s) \text{ with } \W^\alpha \cdot \xi \not=0 \text{ for } \alpha>0 \} \ . \]  
This is the upsilon invariant of the knot $(Y,K)$ in the $\spinc$ structure $\s$. In the sequel we list some basic properties of the upsilon invariant. 

\begin{prop}Let $(Y,K)$ be a knot in a rational homology sphere and $\s$ a $\spinc$ structure of $Y$, then
\begin{itemize}
\item $\Upsilon_{K,\s}(0)=d(Y, \s)$ where $d(Y, \s)$ denotes the Heegaard Floer correction term of the pair $(Y,\s)$ as defined by Ozsv\' ath and Szab\' o in \cite{OS24},
\item $\Upsilon_{K,\s}(t)=\tau_s(K) \cdot t +d(Y, \s)$ for all $t>0$ close enough to zero, where $\tau_s(K)$ denotes the $\tau$-invariant defined by Grigsby, Ruberman, and Strle in \cite{grigsby2008knot},
\item the invariant $\Upsilon_{K, \s}(t)$ defined presently agrees with the one of \cite{upsiloncovers}, that is
\[\Upsilon_{K, \s}(t)= -2 \cdot \min_\xi \left\{ \frac{t}{2} A(\xi) + \left( 1- \frac{t}{2}\right) j(\xi) \right\}+d(Y(G), \s) \ ,\] 
where $\xi$ ranges between  all cycles with Maslov grading $d= d(Y(G), \s)$ 
\end{itemize} 
Furthermore, $\Upsilon_{K,\s}(t)$ is an invariant of $\Spinc$ rational homology concordance.
\end{prop}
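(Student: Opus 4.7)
The plan is to treat the four bullets in turn. The first three are essentially direct from the definitions; the substantive work lies in the concordance invariance.

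\emph{First bullet.} Setting $t=0$ makes the differential $\partial_0$ weight each disk by $\W^{2n_w(\phi)}$ alone, with no contribution from $n_z$. Under the grading-preserving inclusion $\F[U]\hookrightarrow \mathcal{R}$, $U\mapsto \W^2$, the complex $CF_0(\T_\alpha,\T_\beta,\s)$ is identified with $CF^-(Y,\s)\otimes_{\F[U]}\mathcal{R}$, and $\gr_0$ agrees with the Maslov grading. Base change from $\F[U]$ to $\mathcal{R}$ preserves the $U$-tower summand, so $\Upsilon_{K,\s}(0)$ is the maximal Maslov grading of a non-torsion homogeneous cycle in $HF^-(Y,\s)$, which is $d(Y,\s)$ by definition.

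\emph{Third bullet.} Rewriting $\gr_t(\x)=M(\x)-tA(\x)$ in the bifiltered $\F[U]$-model and tracking how $M$, $A$, and the algebraic ($U$-power) filtration $j$ transform under multiplication by $U$, a routine algebraic manipulation shows that maximizing $\gr_t$ over non-torsion homogeneous elements of $\Kt(Y,K,\s)$ is equivalent, after the additive shift $d(Y(G),\s)$, to the minimum of $\tfrac{t}{2}A(\xi)+(1-\tfrac{t}{2})j(\xi)$ taken over cycles at the bottom-of-tower Maslov level. This reproduces the formula of \cite{upsiloncovers}.

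\emph{Second bullet.} By the identity just used, for $t>0$ sufficiently small the cycle $\xi^\ast$ realizing $\Upsilon_{K,\s}(t)$ is independent of $t$ and coincides with an Alexander-extremal cycle of Maslov grading $d(Y,\s)$. Such a cycle realizes the $\tau$-invariant of \cite{grigsby2008knot}, and evaluating $\gr_t$ at $\xi^\ast$ yields $\Upsilon_{K,\s}(t)=M(\xi^\ast)-tA(\xi^\ast)=d(Y,\s)+t\,\tau_\s(K)$ for $t$ in a right neighbourhood of $0$.

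\emph{Fourth bullet --- the main obstacle.} I would adapt the proof of \cite[Theorem 1.20]{OSS4} to the rational-homology-sphere setting. Given a $\Spinc$ rational homology concordance $(W,\Sigma,\mathfrak{t})\colon (Y_0,K_0,\s_0)\to(Y_1,K_1,\s_1)$, decompose $W\setminus\nu(\Sigma)$ into elementary handle cobordisms subordinate to $\Sigma$. To each elementary piece attach a chain map between the $t$-modified complexes by counting pseudo-holomorphic triangles in a subordinate Heegaard triple, weighted by $\W^{tn_z+(2-t)n_w}$ exactly as in the definition of $\partial_t$. Naturality and the standard composition law for cobordism maps, applied to both $W$ and its reverse, produce chain homotopy equivalences of the resulting complexes after inverting $\W$. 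Since $W$ is a rational homology cobordism and $\Sigma$ is a cylinder, the usual Heegaard Floer grading-shift formula contributes zero, so these equivalences preserve $\gr_t$ and hence the maximal-grading invariant $\Upsilon$. The delicate point is precisely this last one: verifying that the grading shifts vanish for $t$-modified complexes in an \emph{arbitrary} $\spinc$ structure on a rational homology cobordism. I expect this to be essentially formal once the $S^3$-grading formulas of \cite{OSS4} are transplanted to the rational framework of \cite{upsiloncovers}, but it is the only step that goes beyond unwinding definitions.
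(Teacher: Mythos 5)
Your first three bullets match the paper's approach, which simply points to \cite[Proposition 1.6]{OSS4} for the second and \cite[Section 14.1]{Livingston1} for the third; your deduction of the second from the third via piecewise-linearity of the minimum near $t=0$ amounts to the same content, and your sign bookkeeping checks. For the concordance invariance you take a genuinely different route: the paper simply cites \cite[Proposition 4.1]{upsiloncovers}, where $\Spinc$ rational homology concordance invariance of $\Upsilon_{K,\s}$ was already established, whereas you instead sketch a direct cobordism-map argument adapted from \cite[Theorem 1.20]{OSS4}. Your sketch points in the right direction, and the step you honestly flag as delicate---that the $\gr_t$-shift of the triangle-counting maps over a rational homology cobordism containing an embedded annulus vanishes---is precisely what is settled in that reference; it is also immediate from Theorem \ref{Zemkeinequality} applied to $W$ and its reverse, since on a rational homology cobordism $b_2(W)=0$, $c_1(\mathfrak{t})$ and $[\Sigma]$ are torsion classes, and $g(\Sigma)=0$, so both inequalities become equalities. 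There is no wrong step in your outline, but your fourth bullet is not complete as written: to finish the argument you would need either to carry out the grading verification you identify or to import one of these results.
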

\begin{proof}
The first assertion follows from the fact that $\partial_t$ agrees with the differential of $CF^-(Y,\s)$ when $t=0$. The fact that $\Upsilon_{K,\s}(t)=\tau_s(K) \cdot t +d(Y, \s)$ for small values of $t$ follows from the same argument of \cite[Proposition 1.6]{OSS4}. While the last assertion is proved with the same argument presented in \cite[Section 14.1]{Livingston1}. 

The fact that $\Upsilon_{K,\s}(t)$ is an invariant of $\Spinc$ rational homology concordance was proved in \cite[Proposition 4.1]{upsiloncovers}.
\end{proof}  

\subsection{Zemke's inequality} A key feature of the Heegaard Floer correction terms is the Ozsv\' ath and Szab\' o inequality \cite{OS24} relating the correction terms of two three-manifold connected by a negative-definite $\spinc$ cobordism. This asserts that given a $\spinc$ cobordism $(W, \mathfrak{t}): (Y_0, \s_0) \to (Y_1, \s_1)$ between two $\spinc$ rational homology spheres $(Y_0, \s_0)$ and $(Y_1, \s_1)$ with $b_1(W)=b_2^+(W)=0$ one has that:
\[d(Y_1, \s_1) \geq d(Y_0, \s_0) + \frac{c_1(\mathfrak{t})^2+ b_2(W)}{4} \ .\]
   
In \cite{zemkegradings} Zemke proved that a similar inequality holds for the upsilon invariant.

\begin{thm}[Zemke \cite{zemkegradings}]\label{Zemkeinequality} Let $(Y_0, K_0)$ and $(Y_1, K_1)$ be two knots, and $\s_i\in \Spinc(Y_i)$ $\spinc$ structures. Suppose that there is a $\spinc$ cobordism $(W, \mathfrak{t}): (Y_0, \s_0) \to (Y_1, \s_1)$  containing a properly embedded surface $\Sigma\hookrightarrow W$ with $\partial \Sigma= K_1\cup-K_0$. If $b_1(W)=b_2^+(W)=0$ then 
\[\Upsilon_{K_1, \s_1}(t)\geq \Upsilon_{K_0, \s_0}(t)+ \frac{c_1(\mathfrak{t})^2+ b_2(W)-2t\langle c_1(\mathfrak{t}), [\Sigma]\rangle +2t [\Sigma]^2 }{4} + g(\Sigma) \cdot (|t-1|-1) ,\]
where $g(\Sigma)$ denotes the genus of the surface $\Sigma$.  
\end{thm}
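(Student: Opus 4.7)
The plan is to adapt Zemke's decorated surface cobordism maps from knot Floer homology to the $t$-modified setting of Section \ref{analytictheory}. First, I equip $\Sigma$ with a dividing arc $A$ separating it into a $z$-subsurface and a $w$-subsurface, producing a decorated cobordism $(W, \Sigma, A, \mathfrak{t}) \colon (Y_0, K_0, \s_0) \to (Y_1, K_1, \s_1)$. I would then construct a chain map
\[
F = F_{W, \Sigma, A, \mathfrak{t}} \colon CF_t(Y_0, K_0, \s_0) \longrightarrow CF_t(Y_1, K_1, \s_1)
\]
by decomposing $(W,\Sigma)$ into elementary pieces—$k$-handles of $W$ attached away from $\Sigma$, band moves along the knot, and genus stabilizations of $\Sigma$—and defining the piece-wise maps by counting the same pseudo-holomorphic triangles used in the $t = 0$ theory, reweighted by $\W^{tn_z + (2-t)n_w}$ rather than by $U^{n_z}$. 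The standard verification that each piece commutes with the differential goes through verbatim, since $\partial_t$ differs from $\partial_0$ only in the monomial weight attached to each pseudo-holomorphic disc.

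Next, I would track the $\gr_t$-shift of $F$. Handle attachments away from $\Sigma$ contribute the usual Maslov shift $(c_1(\mathfrak{t})^2 - 2\chi(W) - 3\sigma(W))/4$, which under the hypotheses $b_1(W) = b_2^+(W) = 0$ collapses to $(c_1(\mathfrak{t})^2 + b_2(W))/4$. The presence of $\Sigma$ shifts the Alexander grading by $(\langle c_1(\mathfrak{t}), [\Sigma]\rangle - [\Sigma]^2)/2$, contributing $-t (\langle c_1(\mathfrak{t}), [\Sigma]\rangle - [\Sigma]^2)/2$ to $\gr_t$. Finally, each genus-one stabilization of $\Sigma$ can be realised either by a $z$-band or by a $w$-band; comparing the two choices and selecting the one minimising the $\gr_t$-drop gives a cost of $\min(t,\, 2-t) = 1 - |t - 1|$ per handle, yielding the total contribution $g(\Sigma)\bigl(|t - 1| - 1\bigr)$. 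Summing these three contributions reproduces precisely the right-hand side of the claimed inequality.

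Finally, I would invoke the negative-definite hypothesis $b_1(W) = b_2^+(W) = 0$ to conclude, via the standard Ozsv\'ath--Szab\'o argument for correction terms (applied here to the localised theory $\Kt \otimes_\mathcal{R} \mathcal{R}^*$), that $F$ is non-zero after inverting $\W$. Consequently, if $\xi_0 \in \Kt(Y_0, K_0, \s_0)$ realises $\Upsilon_{K_0, \s_0}(t)$, meaning $\W^\alpha \cdot \xi_0 \not= 0$ for all $\alpha > 0$, then $F(\xi_0)$ enjoys the same property in the target and has $\gr_t$-level at least $\Upsilon_{K_0, \s_0}(t)$ plus the grading shift computed above. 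Since $\Upsilon_{K_1, \s_1}(t)$ is the supremum of $\gr_t$ over such classes, the stated inequality follows.

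The main obstacle is the bookkeeping at genus-stabilization moves: one must verify that both $z$- and $w$-band realisations give well-defined $\partial_t$-chain maps and compute their exact $\gr_t$-shifts, so that the minimum of the two is indeed $1 - |t - 1|$ per handle. This is the one place where the $t$-modification genuinely interacts with Zemke's surface decoration, and where the absolute value in $(|t - 1| - 1)$ emerges.
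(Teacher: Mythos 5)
The paper does not prove this theorem: it is quoted from Zemke \cite{zemkegradings} and used as a black box, so there is no internal proof to compare against. That said, your blind reconstruction does follow the route Zemke actually takes --- decorated link cobordism maps with grading shifts read off a handle decomposition --- and the key insight, that each genus of $\Sigma$ can be realised by a $z$-band or by a $w$-band at a $\gr_t$-cost of $\min(t,2-t)=1-|t-1|$, is indeed the correct origin of the $g(\Sigma)(|t-1|-1)$ term.

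Two places where the sketch is genuinely thin and where the substance lives. First, Zemke's grading shifts are not organised as ``handles away from $\Sigma$'' plus ``Alexander shift'' plus ``genus stabilisations''; they are bookkept through the Euler characteristics $\chi(\Sigma_w)$ and $\chi(\Sigma_z)$ of the two subsurfaces cut out by the dividing arcs, together with $\langle c_1(\mathfrak{t}),[\Sigma]\rangle$ and $[\Sigma]^2$. Minimising over dividing sets (put all the genus on the $w$- or $z$-side, whichever is cheaper for the given $t$) does reproduce your term-by-term accounting, but you still need to verify that your additive decomposition and Zemke's formula actually agree. Second, the non-vanishing of $F$ after inverting $\W$ is not automatic from the grading computation alone: it requires the identification $\Kt(Y,K)\otimes_{\mathcal{R}}\mathcal{R}^*\simeq HF^\infty(Y)\otimes_{\F}\mathcal{R}^*\simeq\mathcal{R}^*$ for rational homology spheres --- which the paper itself invokes when it argues via $tHFK^\infty$ in Section 6 --- together with the Ozsv\'ath--Szab\'o fact that a cobordism map with $b_1(W)=b_2^+(W)=0$ is an isomorphism on the localised theory in the appropriate $\Spinc$ structures. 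Only then does pushing a non-torsion class through $F$ at the $\gr_t$-level give the stated bound. Both of these are supplied in Zemke's paper, so your roadmap is right, but these are the steps that carry the argument.
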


Suppose that $(Y,K)$ is an algebraic knot represented by a negative definite plumbing tree $\Gamma$. Let $v_0 \in \Gamma$ be the unframed vertex and $G=\Gamma\setminus \{v_0\}$. Then $K \subset Y=Y(G)$ bounds a smooth disk $\Delta \subset X(G)$ properly embedded in the plumbing of spheres associated to $G$. (If $(Y,K) = (S^3, K)$ is the link of a plane curve singularity $ (C, 0) \subset (\C^2, 0)$ then $X(G)$ is identified with the total space of a resolution $\rho: \C^2 \# n\overline{\CP^2} \to \C^2$ of $\C^2$ at the origin and $\Delta = \widetilde{C}$ is just the proper transform of $C$.) 

Since $X(G)$ is simply connected, given a $\spinc$ structure $\s$ of $Y(G)$ we can chose an extension $\mathfrak{t}$ to $X(G)$. Then according to Theorem \ref{Zemkeinequality} one has that 
\begin{equation}\label{charineq} \Upsilon_{K, \s}(t)\geq  \frac{k^2+ |G|}{4}-t \cdot \frac{ k \cdot F- F^2 }{2} \ ,
\end{equation} 
where $k=c_1(\mathfrak{t})$ denotes the first Chern class of $\mathfrak{t}$, and $F \in H_2(X(G), \Q)$ a homology class representing the Poincar\' e dual of  $\phi(c)=-\#(\Delta \cap c)$, $c \in H_2(X(G), \Z)$. 
See Section \ref{alexanderfiltration} below. 

In what follows we will show (Theorem \ref{maingoal}) that the inequality displayed in Equation~\eqref{charineq} is sharp, that is $\Upsilon_{K, \s}(t)= (k^2+ |G|)/4-t \cdot (k \cdot F- F^2)/2$ for some  characteristic vector $k$, if the graph $G$ satisfies suitable combinatorial hypothesis.

\section{Deformations of lattice cohomology}
\subsection{A quick review of lattice cohomology} Let $G$ be a negative-definite plumbing of spheres. Denote by $X(G)$ the plumbing of spheres associated to $G$. Let 
\begin{align*}
\text{Char}(G)&=\{c_1(\s)  : \s \in \Spinc(X(G))\} \\
&=\{ k \in H^2(X(G), \Z)  :   k(x) \equiv x^2 \ (\text{mod 2}) \text{ for every }x \in H_2(X(G), \Z)\} \ .
\end{align*}
be the set of characteristic vectors of the intersection form of $X(G)$. Notice that, since $X(G)$ is simply connected, a $\Spinc$ structure $\s$ of $X(G)$ is uniquely determined by its first Chern class $c_1(\s)$. Thus, $\text{Char}(G)\simeq \Spinc(X(G))$. 

In what follows we will be interested in the $\Spinc$ structures of $Y(G)=\partial X(G)$. These always extend over $X(G)$, and two $\Spinc$ structures represented by characteristic vectors $k$ and $k'$ induce the same $\Spinc$ structure on the boundary $\partial X(G)=Y(G)$  if and only if $k-k' \in 2 \cdot H^2(X(G), Y(G))\simeq 2 \cdot H_2(X(G))$.

In \cite{OS20} a computational scheme for the Heegaard Floer homologies of graph manifolds was described. This eventually lead to the definition of  the \textit{lattice homology groups} \cite{Nemethi1} whose construction we presently review. Let $s$ denotes the number of vertices of $G$, and $\s$ a $\Spinc$ structure of $Y$. Think $H_2(X(G), \Z)= \Z^s$ as a lattice in $H_2(X(G) , \R)= \R^s$. The points of $\Z^s \subset \R^s$ specify the vertices of  a subdivision into hypercubes of $H_2(X(G) , \R)=\R^s$, and hence a CW-complex decomposition of $\R^s$. A $p$-cell of this CW-complex decomposition is specified by a pair $(\ell, I)$ where $\ell \in H_2(X(G), \Z)=\Z^s$, and $I \subset G$ with $|I|=p$. More specifically, we associate to such a pair $(\ell, I)$ the $|I|$-cell corresponding to the convex hull of $\{ \ell + \sum_{v \in J} v \ |\  J \subset I \}$. Fix a reference characteristic vector $k\in \Char(G)$ representing $\s$, and set 
$ \chi_k(x) =- \frac{1}{2} (k(\ell)+ \ell^2)$. This is the Riemann-Roch quadratic form associated to the characteristic vector.  
For a $p$-cell $\Box$ of the latter CW decomposition of $\R^s$ we set 
\[w_k(\Box)= \max_{\text{vertices of } \Box} \chi_k(v) \ . \]
For $l \in \Z$ consider the sub-level set $M_l= \bigcup_{w_k(\Box)\leq l}\Box$  and form the chain complex
\[ \CF_*(G,\s)=\bigoplus_{l \in \Z} C_{*}(M_l, \F)  \ ,\]
where $C_*(-, \F)$ denotes CW homology with $\F$-coefficients. (Note that sub-level sets are sub-complexes of $\R^s$ since $w_k(\Box_i)\leq w_k(\Box)$ for every $(p-1)$-dimensional face $\Box_i$ of a given $p$-cell $\Box$.) This is the lattice homology chain group associated to $(G, \s)$. Notice that the inclusions $\dots M_{l-1} \hookrightarrow M_l \hookrightarrow M_{l+1} \dots$ induce a chain map $U: \CF_*(G,\s) \to \CF_*(G,\s)$ turning $\CF_*(G,\s)$ into a chain complex over the ring $\F[U]$. Notice that $\CF_*(G,\s)$ has a natural $\F[U]$-basis: $\mathcal{B}=\{ \Box \in  C_*(M_{w_k(\Box)},\Z) : \Box \  p\text{-face of } \R^s , \ 0 \leq p \leq s\}$. With respect to $\mathcal{B}$ the differential of  $\CF_*(G,\s)$ reads as:
\begin{equation} \label{latticedifferential}
\partial \Box = \sum_{ i}  U^{w(\Box)- w (\Box_i)} \cdot \Box_i \ ,
\end{equation}
where the sum is extended to all $(p-1)$-dimensional faces $\Box_i$ of $\Box$.

\subsection{Gradings} 
In addition to the grading induced by the homological grading of the $C_{*}(M_t, \Z)$ summands, $\CF_*(G,\s)$ has another  grading corresponding to the Maslov grading of the analytic theory: 
\[ \ \ \  \ \ \  \ \ \  \ \ \ \gr(\Box)=\dimm(\Box)-2w_k(\Box)+ \frac{k^2+ |G|}{4} \ ,\] 
for a basis element $\Box \in \mathcal{B}$. This is then extended to $\F$-generators via the identity $\gr(U^j\cdot\Box)=\gr(\Box)-2j$. 

\subsection{The filtration of an algebraic knot}\label{alexanderfiltration}
An algebraic knot $K \subset Y(G)$ is a knot that can be described by a plumbing tree $\Gamma$ with one unframed vertex $v_0$ such that $G=\Gamma -v_0$. As in the analytic theory, the choice of such a knot $K$ induces a filtration on $\CF_*(G, \s)$. We define the Alexander grading of a generator $\Box \in\mathcal{B}$ by
\[A(\Box)= w_{k+2v_0^*}(\Box)- w_k(\Box) + \frac{k \cdot F -F^2}{2} \ .\]
where $F \in H_2(X(G), \Q)$ is a rational homology class representing the Poincar\' e dual\footnote{The class of $v_0$ makes no sense in $H_2(X(G); \Z)$ since $v_0$ does not represent a closed surface in $X(G)$. On the other hand, $v_0$ represents a properly embedded disk $\Delta \subset X(G)$ and hence a class in $H_2(X(G), \partial X(G); \Z)$. Thus we can consider its Poincar\' e dual $v_0^*\in Hom(H_2(X(G); \Z), \Z)$. This is characterized by the property that  $v_0^* \cdot v=1$ if $v_0v$ is in $\Gamma$, and zero otherwise.} of $-v_0^*$, \text{i.e.} such that $F\cdot v = -v_0^* \cdot v$ for each $v \in G$. We define the Alexander grading of a chain $\xi = \sum_{j=1}^m  U^{m_j}\Box_j$ as the maximum of the Alexander grading of its components $A (\xi)= \max_j A(\Box_j)-m_j$. Note that the multiplication by $U$ drops the Alexander grading by one.

\begin{prop} $A(\partial \Box)\leq A(\Box)$.
\end{prop}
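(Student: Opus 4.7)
The plan is to unwind both sides of the inequality against the explicit formula for the lattice differential and show that the claim reduces to an obvious monotonicity statement of the weight functions $w_{k+2v_0^*}$ on faces.

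First I would use the formula displayed in Equation~\eqref{latticedifferential} to write
\[
\partial \Box = \sum_i U^{w_k(\Box)-w_k(\Box_i)} \cdot \Box_i ,
\]
where the sum ranges over the $(p-1)$-dimensional faces $\Box_i$ of $\Box$. By the definition of the Alexander grading on chains as a maximum, and by the fact that multiplication by $U$ drops $A$ by one, it suffices to verify the cell-wise inequality
\[
A(\Box_i) - \bigl(w_k(\Box)-w_k(\Box_i)\bigr) \leq A(\Box)
\]
for every face $\Box_i$ of $\Box$.

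Next I would substitute the definition
\[
A(\Box) = w_{k+2v_0^*}(\Box) - w_k(\Box) + \frac{k\cdot F - F^2}{2}
\]
into both sides. The constant term $\tfrac{k\cdot F-F^2}{2}$ cancels, as do the two occurrences of $w_k(\Box_i)$, and the $w_k(\Box)$ terms coincide. The required inequality therefore collapses to
\[
w_{k+2v_0^*}(\Box_i) \leq w_{k+2v_0^*}(\Box).
\]

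Finally, this last inequality is immediate from the definition $w_c(\Box) = \max_{v \in \text{vert}(\Box)} \chi_c(v)$: the vertices of a face $\Box_i$ of $\Box$ form a subset of the vertices of $\Box$, so the maximum of $\chi_{k+2v_0^*}$ over $\text{vert}(\Box_i)$ is bounded above by the maximum over $\text{vert}(\Box)$. There is really no hard step here; the only thing worth checking carefully is that the constant shift in the definition of $A$ and the $U$-power prefactor in $\partial$ cooperate so that both are measured by the \emph{same} reference characteristic vector $k$, which is indeed how the formulas are set up.
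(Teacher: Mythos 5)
Your proof is correct and follows exactly the same route as the paper's: reduce via the $U$-power in the differential to a cell-wise inequality, cancel the constant and $w_k$ terms, and conclude from the obvious monotonicity $w_{k+2v_0^*}(\Box_i)\leq w_{k+2v_0^*}(\Box)$ for faces.
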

\begin{proof}We have to prove that for a $p$-cell $\Box$ we have
\[A( U^{w_k(\Box)- w_k(\Box_i)} \cdot \Box_i ) \leq A(\Box)\]
for every $(p-1)$-face $\Box_i$ of $\Box$. Since multiplication by $U$ drops the Alexander grading by one everything boils down to prove that
\[ A(  \Box_i ) - (w_k(\Box)- w_k (\Box_i)) \leq A(\Box) \ .\]
Substituting the value of the Alexander filtration, and canceling $(k \cdot F +F^2)/4$ on both sides we get  
\[ w_{k+2v_0^*}(\Box_i)-\cancel{ w_k(\Box_i) }  - (\cancel{w_k(\Box)}- \cancel{ w_k (\Box_i)})  \leq  w_{k+2v_0^*}(\Box)- \cancel{w_k(\Box )} \ , \]
On the other hand the inequality $w_{k+2v_0^*}(\Box_i) \leq  w_{k+2v_0^*}(\Box)$ follows immediately from the definitions.   
\end{proof}

\begin{rmk}
Exactly as in the analytic theory the group $\CF(G, \s)$ has an algebraic filtration $j$. For a $q$-cell $\Box \subset M_l$ this is given by $j(\Box)= w_k(\Box)- l$. 
\end{rmk}

\subsection{Proof of Theorem \ref{maingoal} for rational graphs} Recall that a vertex $v$ of a negative definite plumbing tree $G$ is said to be \textit{bad} if $\deg (v) >- v^2$. Using the number of bad vertices we can partition algebraic knots into complexity classes: we say that a knot $K\subset Y(G)$ is of \textit{type-$k$} if it can be represented by a plumbing diagram $\Gamma$ with underlying plumbing tree $G$ having no more than $k$ bad points. 

We now prove that \eqref{charineq} is sharp in the special case of algebraic knots that can be represented by means of a plumbing diagram with no bad points. 

\begin{prop} Suppose that $K$ is an algebraic knot of type-0 (no bad vertices) then 
\begin{equation}\label{inspiring}
 \Upsilon_{K, \s}(t)=  -2 \min_{x\in \Z^s} \chi_t(x)+\left( \frac{k^2+ |G|}{4} - t \ \frac{k\cdot F-F^2}{2} \right) \ , \end{equation}
where $\chi_t$ denotes the twisted Riemann-Roch function  
\[\chi_t(x)=-\frac{1}{2}\left( (k+tv_0^*) \cdot x + x^2 \right)\ .\]
\end{prop}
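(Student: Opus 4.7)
The plan is to establish the two inequalities $\geq$ and $\leq$ separately. The lower bound follows essentially for free from Zemke's inequality applied to the natural one-parameter family of characteristic vectors representing $\s$, while the upper bound is where the type-0 hypothesis enters and reduces to a direct grading computation in the deformed lattice complex.

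For the lower bound, fix $x \in H_2(X(G);\Z) = \Z^s$. Then $k' = k+2x$ is again characteristic and represents the same $\spinc$ structure on $\partial X(G) = Y(G)$, so inequality~\eqref{charineq} applies verbatim with $k'$ in place of $k$. Using $F \cdot x = -v_0^* \cdot x$ (the defining property of $F$) and expanding $(k+2x)^2$ and $(k+2x) \cdot F$, one obtains
\[\frac{(k+2x)^2 + |G|}{4} - t\,\frac{(k+2x) \cdot F - F^2}{2} = \left( \frac{k^2 + |G|}{4} - t\,\frac{k \cdot F - F^2}{2} \right) - 2\chi_t(x).\]
Taking the supremum over $x \in \Z^s$ yields the desired inequality.

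For the upper bound I would invoke the Ozsv\'ath--Szab\'o computation \cite{OS20}, which identifies $CF^-(Y(G),\s)$ with the lattice complex $\CF_*(G,\s)$ in the absence of bad vertices, and extend the identification to the $t$-deformed setting so that $\Kt(Y(G),K,\s)$ matches the twisted lattice homology $t\mathbb{HFK}_*(\Gamma,\s)$ constructed in the previous section. In the type-0 case the complex is particularly transparent: the $\W$-non-torsion part of the homology is a single free tower, generated by any vertex $v \in \Z^s$ viewed as a $0$-cell. A direct application of the grading formulas from Section~\ref{alexanderfiltration} gives
\[\gr_t(v) = -2\chi_t(v) + \frac{k^2 + |G|}{4} - t\,\frac{k \cdot F - F^2}{2},\]
so the maximum of $\gr_t$ over $\W$-non-torsion cycles is realized by the vertex minimizing $\chi_t$, yielding the upper bound.

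The hard part will be justifying rigorously that the deformed lattice complex actually computes $\Kt$ in the type-0 case. At $t = 0$ this is immediate from \cite{OS20}, but tracking the Alexander filtration induced by $v_0$ across the quasi-isomorphism --- and hence the $tv_0^*$-twist --- requires some additional bookkeeping. Fortunately, in the type-0 setting one expects no higher-cell contributions to the $\W$-non-torsion part of the homology, so after passing to the free part the entire computation reduces to matching Maslov and Alexander gradings on $0$-cells, which is precisely the calculation above.
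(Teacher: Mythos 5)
Your proposal is correct in outline and partially departs from the paper's route. The paper proves this proposition in one shot: it cites \cite{Lspaceknots} for a \emph{filtered} chain homotopy equivalence between $CFK^\infty(K,Y(G),\s)$ and the lattice complex $(\CF^-(G,\s)\otimes_{\F[U]}\F[U,U^{-1}],\partial,A)$, combines this with the formula for $\Upsilon$ as $-2\min_\xi\{\frac{t}{2}A(\xi)+(1-\frac{t}{2})j(\xi)\}+d(Y(G),\s)$ over degree-$d$ cycles, notes that contractibility of $\R^s$ lets one restrict to the $0$-cells, and then performs exactly the grading calculation you wrote down. Your lower bound via Zemke's inequality is a genuinely different and attractively elementary observation: since every characteristic vector representing $\s$ on the boundary has the form $k+2x$ for $x\in H_2(X(G);\Z)$ and \eqref{charineq} holds for each of them, your algebraic identity converts the supremum over extensions into $-2\min_x\chi_t(x)$. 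This direction does not even need the type-0 hypothesis, and it isolates where the hypothesis is actually used (only in the upper bound). The cost is redundancy once one has the filtered identification.

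Two items to tighten. First, \cite{OS20} only establishes the equivalence $CF^-(Y(G),\s)\simeq \CF_*(G,\s)$ for the three-manifold invariant and does not see the knot $v_0$; the ``additional bookkeeping'' you flag at the end is real, and it is precisely what \cite{Lspaceknots} supplies: a \emph{filtered} chain homotopy equivalence, carrying not just the Maslov grading but the Alexander filtration induced by $v_0$ to the combinatorial filtration $w_{k+2v_0^*}-w_k$. With that citation your upper-bound sketch closes. Second, as stated, your upper bound restricts to $0$-cells because ``one expects no higher-cell contributions to the $\W$-non-torsion part''; the paper makes this precise by noting $H_p(\R^s;\Z)=0$ for $p>0$, so any higher cycle is eventually null-homologous in some sub-level set $M_l$ and therefore $\W$-torsion (this is Lemma~\ref{shape} in the combinatorial theory). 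Including that remark would make the argument watertight.
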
  
\begin{proof} According to \cite{Lspaceknots}  $CFK^\infty(K, Y(G), \s)$ has the same filtered chain homotopy type of $(\CF^-(G, \s) \otimes_{\F[U]} \F[U, U^{-1}], \partial, A)$. Thus
\begin{equation}\label{pippo}
\Upsilon_{K, \s}(t)= -2 \cdot \min_\xi \left\{ \frac{t}{2} A(\xi) + \left( 1- \frac{t}{2}\right) j(\xi) \right\}+d(Y(G), \s) 
\end{equation} 
where the minimum is taken over all cycles $\xi$ with Maslov grading $d= d(Y(G), \s)$ representing the generator of $H_d(\CF^-(G, \s) \otimes_{\F[U]} \F[U, U^{-1}])=\F$. 

Since $H_p(\R^s, \Z)=0$ for $p > 0$ any $p$-cycle $\xi \subset \R^s$ eventually bounds a $(p+1)$-chain in $M_{l}$. Hence the minimum in Equation~\eqref{pippo} can be taken over all cycles of the form $\xi=U^{-j} \cdot x$, with $x$ representing a vertex of the CW-decomposition of $\R^s$, and $j \in \Z$. Imposing $\gr(U^{-j} \cdot x)=d$, and substituting into Equation \eqref{pippo}, we get 
\begin{align*}
\Upsilon_{K, \s}(t) &= -2 \cdot \min_{x \in \Z^s}\left\{ 
\frac{t}{2}\left(A(x)+ \frac{d-\gr(x)}{2}\right) + \left(1-\frac{t}{2} \right) \left( \frac{d-\gr(x)}{2}\right)\right\}+ d\\ 
&=  -2 \cdot \min_{x \in \Z^s}\left\{ \frac{t}{2}\big(\chi_{k+2v_0*}(x) -\chi_k(x) \big) +  \chi_k(x) + t \ \frac{k\cdot F- F^2}{4} -\frac{k^2+ |G|}{8}\right\} \\
&= -2 \cdot \min_{x \in \Z^s}\left\{ -\frac{t}{2}v_0^* \cdot x +   \chi_k(x)\right\} +\frac{k^2+ |G|}{4} - t \ \frac{k\cdot F- F^2}{2} \ ,
\end{align*} 
from where the claimed identity. 
\end{proof}  

\subsection{Constructions of the groups}\label{construction}
Formula \eqref{inspiring} suggests that the upsilon invariant of an algebraic knot can be expressed as the correction term of  suitable lattice groups. 

Let $K \subset Y(G)$ be an algebraic knot ($G$ negative-definite) presented by a tree $\Gamma$ with one unframed vertex $v_0$. For $t \in [0,2]$ we twist the Riemann-Roch function by means of the real cohomology class $t v_0^* \in H^2(X(G), \R)$ \[\chi_t(x)=-\frac{1}{2}\left( (k+tv_0^*) \cdot x + x^2 \right) \ . \]   
Again we stress the fact that the homology class of the vertex $v_0$ does not exists while its Poincar\' e dual is always defined. With this said, we extend $\chi_t$ to $p$-cells ($0<p\leq n$) via the identity
\begin{equation}\label{levelfunction}
 w_t(\Box)=(2-t) \cdot w_{k}(\Box)+t \cdot w_{k+2v_0^*}(\Box)   \     .  
\end{equation}

Note that $w_t$ has the key property that $w_t(\Box_i)\leq w_t(\Box)$ for every face $\Box_i$ of $\Box$. For each real parameter $l \in \R$ we can now consider the sub-level set $M_l= \bigcup_{w_t(\Box)\leq l}\Box$ and form the persistent homology module 
\[\CF_t(\Gamma,\s)=\prod_{l \in \R} C_{*}(M_l, \F)  \ .\]
This has an $\mathcal{R}$-module structure: for $\alpha \geq 0$ we set $\W^\alpha \cdot \Box= \iota_{ \#}^\alpha(\Box)$, where 
$\iota^\alpha : M_l \hookrightarrow M_{l+\alpha}$ denotes the inclusion, and we extend it to an $\mathcal{R}$-action by linearity. (Note that the use of direct products in the definition of $\CF_t$ is crucial.) 

\subsection{Gradings in the deformations} As in the analytic setting we define a grading on $\CF_t(\Gamma, \s)$ setting $\gr_t(\Box)= \gr(\Box)- t A(\Box)$. 
\begin{prop} The differential of $\CF_t(\Gamma, \s)$ drops the grading by one.
\end{prop}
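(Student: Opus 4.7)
The plan is to reduce the claim to the observation that $\gr_t$ agrees, up to an overall constant, with the function $\dim(\Box) - w_t(\Box)$, and then to match this with the shift induced by the powers of $\W$ appearing in the differential.

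First I would unpack the definitions. Writing $C = \frac{k^2+|G|}{4} - t\,\frac{k\cdot F - F^2}{2}$, we have
\[
\gr_t(\Box) = \dim(\Box) - 2 w_k(\Box) + \tfrac{k^2+|G|}{4} - t\bigl(w_{k+2v_0^*}(\Box) - w_k(\Box)\bigr) - t\,\tfrac{k\cdot F - F^2}{2}.
\]
Grouping the $w$ terms gives $-2w_k(\Box) + tw_k(\Box) - tw_{k+2v_0^*}(\Box) = -\bigl((2-t)w_k(\Box) + t\,w_{k+2v_0^*}(\Box)\bigr) = -w_t(\Box)$ by \eqref{levelfunction}. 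Hence $\gr_t(\Box) = \dim(\Box) - w_t(\Box) + C$, with $C$ independent of $\Box$.

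Next I would identify the differential of $\CF_t(\Gamma,\s)$ on the distinguished basis. Since $\CF_t(\Gamma,\s)$ is a product of the cellular chain complexes of the sub-level sets $M_l$, and a $p$-cell $\Box$ enters the filtration at level $w_t(\Box)$, the CW boundary of $\Box$ inside $M_{w_t(\Box)}$ expresses each $(p-1)$-face $\Box_i$ (which appears at its own level $w_t(\Box_i)\le w_t(\Box)$) as $\W^{w_t(\Box)-w_t(\Box_i)}\cdot \Box_i$. Thus, by direct analogy with \eqref{latticedifferential},
\[
\partial \Box = \sum_{\Box_i} \W^{w_t(\Box) - w_t(\Box_i)}\cdot \Box_i,
\]
where the sum runs over the $(p-1)$-faces $\Box_i$ of $\Box$.

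Finally, using $\deg \W = -1$ and the identity $\gr_t(\Box) = \dim(\Box) - w_t(\Box) + C$, I would compute
\[
\gr_t\bigl(\W^{w_t(\Box)-w_t(\Box_i)}\cdot \Box_i\bigr) = \gr_t(\Box_i) - \bigl(w_t(\Box) - w_t(\Box_i)\bigr) = (p-1) - w_t(\Box) + C = \gr_t(\Box) - 1.
\]
Since every summand of $\partial \Box$ has $\gr_t$ equal to $\gr_t(\Box) - 1$, the differential is homogeneous of degree $-1$, as required. No real obstacle arises: the only point that needs care is verifying that the exponent $w_t(\Box) - w_t(\Box_i)$ is nonnegative (so that $\W$ acts, and in particular that $\partial$ preserves $\CF_t(\Gamma,\s)$), which follows from the property $w_t(\Box_i) \le w_t(\Box)$ noted immediately after \eqref{levelfunction}.
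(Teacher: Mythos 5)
Your proof is correct and follows essentially the same route as the paper's: rewrite $\gr_t(\Box) = \dim(\Box) - w_t(\Box) + C$ using the definition of $w_t$, express $\partial_t \Box$ on the distinguished basis, and observe that each term $\W^{w_t(\Box)-w_t(\Box_i)}\cdot\Box_i$ sits in grading $\gr_t(\Box)-1$. Your explicit note that $w_t(\Box_i)\le w_t(\Box)$ guarantees the exponent is nonnegative is a welcome clarification, and your sign bookkeeping is actually cleaner than the paper's displayed computation.
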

\begin{proof}Using $\mathcal{B}=\{ \Box \in  C_*(M_{w_k(\Box)},\Z) : \Box \  p\text{-face of } \R^s , \ 0 \leq p \leq s\}$ as $\F[U]$-basis the differential $\partial_t$ of  $\CF_t(\Gamma,\s)$ writes as 
\begin{equation} 
\partial_t \Box = \sum_{ i}  \W^{w_t(\Box)- w_t (\Box_i)} \cdot \Box_i \ ,
\end{equation}
where the sum is extended to all faces $\Box_i$ of $\Box$. For a $p$-cell $\Box$ one computes 
\[\gr_t(\Box)=\dimm(\Box) -w_t(\Box) + \frac{k^2+|G|}{4}-t \ \frac{k \cdot F- F^2}{2}  \ .\]
Thus, for a component $c_i=\W^{w_t(\Box)- w_t (\Box_i)} \cdot \Box_i$ of the differential $\partial_t \Box$ of a $p$-cell we have 
\begin{align*}
\gr_t(\Box)-\gr_t(c_i)&= \gr_t(\Box)- \gr_t(\Box_i)- (w_t(\Box)- w_t(\Box_i))\\
&= (\dimm(\Box)- \cancel{w_t(\Box)})- (\dimm(\Box_i)- \cancel{w_t(\Box_i)})- \cancel{w_t(\Box)}+ \cancel{ w_t(\Box_i)}\\
&= \dimm(\Box)- \dimm(\Box_i)=1  \ . 
\end{align*}

\vspace{-0.6cm}
\end{proof}

\subsection{Correction terms}\label{combcorterms}Here is a structure theorem for the module $t\mathbb{HFK}_*$. 
\begin{lem}\label{shape}
The group $t\mathbb{HFK}_*(\Gamma, \s)$ is an $\mathcal{R}$-module of rank one. Furthermore, non-torsion elements are concentrated in lattice grading zero.  
\end{lem}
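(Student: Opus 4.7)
The plan is to exploit the contractibility of $\R^s$, together with the fact that the sub-level sets $M_l$ give an exhausting filtration. The key geometric input is that the twisted Riemann--Roch form $\chi_t(x)=-\tfrac12((k+tv_0^*)\cdot x+x^2)$ is a positive definite quadratic form (plus a linear correction) because $G$ is negative definite, so its Hessian $-I$ is positive definite. Consequently $\chi_t(x)\to+\infty$ as $|x|\to\infty$, and via the convex combination formula \eqref{levelfunction} the same blow-up happens for $w_t$ on cells. Hence for every $l$ only finitely many cells satisfy $w_t(\Box)\leq l$, so each $M_l$ is a \emph{finite} CW-subcomplex of $\R^s$. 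Moreover, since every cell has some finite $w_t$-value, $\bigcup_l M_l=\R^s$, which is contractible; in particular $H_p(\R^s;\F)=0$ for $p>0$ and $H_0(\R^s;\F)=\F$.

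Next I would compute the rank by localizing. Using the characterization quoted in the paper, the rank of $t\mathbb{HFK}_*(\Gamma,\s)$ as an $\mathcal{R}$-module equals $\dim_{\mathcal{R}^*} t\mathbb{HFK}_*(\Gamma,\s)\otimes_{\mathcal{R}}\mathcal{R}^*$. The $\mathcal{R}$-action was defined so that $\W^\alpha\cdot\Box=\iota^\alpha_{\#}(\Box)$ is the push-forward induced by the inclusion $M_l\hookrightarrow M_{l+\alpha}$; thus inverting $\W$ is the algebraic counterpart of passing to the direct limit of the $C_*(M_l,\F)$ along these inclusions. Since homology commutes with direct limits of spaces, this localization computes
\[
\varinjlim_l H_*(M_l,\F)\;=\;H_*\bigl(\textstyle\bigcup_l M_l,\F\bigr)\;=\;H_*(\R^s,\F)\;=\;\F\quad\text{(in degree }0\text{)}.
\]
This proves both that the rank is one and that the surviving (non-torsion) class lies in homological (i.e.\ lattice) degree zero: the unique generator of $H_0(\R^s;\F)$ is represented by any vertex $v\in\Z^s$, which is a $0$-cell, and a single vertex is visibly not a boundary in any $M_l$ (the sum of coefficients of a boundary vanishes), so its class is in the image of the canonical map from the $\mathcal{R}$-module generated by $v$.

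The main subtlety I expect to wrestle with is the direct product in the definition of $\CF_t(\Gamma,\s)=\prod_l C_*(M_l,\F)$ and the fact that, as written, this is not finitely generated over $\mathcal{R}$. One must check that the homology is finitely generated so that the structure theorem applies and the notion of rank matches the $\mathcal{R}^*$-dimension used above. I would address this by identifying $\CF_t(\Gamma,\s)$ with the free $\mathcal{R}$-module on the cells $\Box$ of the cubulation of $\R^s$ with differential $\partial_t\Box=\sum_i \W^{w_t(\Box)-w_t(\Box_i)}\Box_i$, and then running a discrete-Morse/acyclic-matching argument on the cubical CW-structure using the level function $w_t$: the unique minimum of $\chi_t$ on $\Z^s$ produces the one unmatched $0$-cell, every other cell cancels against a neighbour (since the matching sees a contractible target space), and the resulting collapsed complex is $\mathcal{R}$ in degree zero. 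This both confirms finite generation and pins down the generator geometrically, completing the proof.
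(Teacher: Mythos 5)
Your proof takes a genuinely different route from the paper. The paper works entirely at the level of cycles: given a $p$-cycle $\xi\subset M_l$ with $p>0$, contractibility of $\R^s$ gives an $l'\geq l$ with $\xi=\partial\beta$ in $M_{l'}$, so $\W^{l'-l}[\xi]=0$; for $p=0$ a chain with an even number of vertices eventually bounds by the same argument, while any two chains with odd numbers of vertices eventually become homologous, giving $\W^{l''-l}[\xi]=\W^{l''-l'}[\xi']$ for suitable exponents. This is elementary, needs no structure theorem and no finiteness hypothesis, and directly exhibits the rank-one, degree-zero structure. You instead localize to $\mathcal{R}^*$, interpret the localization as a direct limit of $H_*(M_l;\F)$, and identify that with $H_*(\R^s;\F)$.

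There is a genuine gap in your argument as written. The characterization of rank via $\dim_{\mathcal{R}^*}(M\otimes_{\mathcal{R}}\mathcal{R}^*)$ is stated in the paper only for \emph{finitely generated} $\mathcal{R}$-modules, and $t\mathbb{HFK}_*(\Gamma,\s)$ is the homology of the infinitely generated complex $\prod_{l\in\R}C_*(M_l;\F)$. You flag this yourself, but the proposed fix (discrete Morse on the cubulation) is not carried out, and it is not obvious that it is routine: the level function $w_t$ is real-valued and not generic, there may be ties and accumulation of values, and one has to produce an acyclic matching compatible with the direct-product completion. More importantly, the identification ``inverting $\W$ is the algebraic counterpart of passing to the direct limit'' is a heuristic that does not immediately justify the displayed equality: localizing a direct \emph{product} does not commute with taking direct limits the way a direct sum would, so the step $H_*(\CF_t\otimes\mathcal{R}^*)=\varinjlim_l H_*(M_l;\F)$ needs an argument (e.g.\ that $\mathcal{R}^*$ is flat, so localization commutes with homology, and then a direct comparison of $\CF_t\otimes\mathcal{R}^*$ with the cellular complex of $\R^s$ after the change of basis $\Box\mapsto\W^{-w_t(\Box)}\Box$). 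The paper's cycle-level argument sidesteps all of this; a cleaner version of your strategy would replace the appeal to the structure theorem and to direct limits with that explicit rescaling of generators over $\mathcal{R}^*$, which reduces the localized complex to $C_*(\R^s;\F)\otimes_{\F}\mathcal{R}^*$ without any finite-generation hypothesis.
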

\begin{proof}
Let $\xi \subset M_l$ be a $p$-chain, for some $p>0$. Since $\R^s$ is contractible there exists $l'\geq l$ such that $\xi=\partial \beta$ in $M_{l'}$. Thus $\W^{l'-l} \cdot [\xi]=0$, showing that all cycles of this type are torsion. 

If $p=0$ on the other hand, $\xi=v_1+ \dots+v_m$ with either $m$ even or odd. If $m$ is even then we can find $l'\geq l$ and a $1$-chain $\gamma \subset M_{l'}$ such that either $\xi=\partial \gamma$. Again this proves that  $\W^{l'-l}[\xi]=0$, hence 
that $[\xi]$ is an element of $\mathcal{R}$-torsion. If $m$ is odd on the other hand given any other $0$-cycle $\xi'=v_1+\dots +v_{2n+1} \subset M_{l'}$ we can find $1$-chain $\gamma \subset M_{l''}$ with $l''>\max\{l, l'\}$ such that $\xi-\xi'=\partial\gamma$. In this case $\W^{l''-l} [\xi] + \W^{l''-l'} [\xi']=0$ and we are done showing that the rank is one.    
\end{proof}

As in the analytic theory we define 
\[\Upsilon_{\Gamma, \s}(t)=\max \{ \gr_t(\xi) \ | \ \xi \in t\mathbb{HFK}(\Gamma, \s) \text{ with } \W^\alpha \cdot \xi \not=0 \text{ for } \alpha>0 \} \ .\] 
The following proves that $\Upsilon_{\Gamma, \s}(t)$ can be computed combinatorially as in Equation \eqref{inspiring}. 

\begin{lem}\label{goal}
Let $k$ be a characteristic vector representing the $\Spinc$ structure $\s$. Then 
\[\Upsilon_{\Gamma, \s}(t)=-2 \min_{x\in \Z^s} \chi_t(x)+\left( \frac{k^2+ |G|}{4}- t \ \frac{k\cdot F- F^2}{2} \right)  \ . \] 
\end{lem}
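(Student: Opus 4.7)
The plan is to combine the structure theorem Lemma~\ref{shape} with a direct computation of the $\gr_t$-grading on vertices of the CW decomposition of $\R^s$.

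First, I would reduce the computation of $\Upsilon_{\Gamma,\s}(t)$ to a maximization over vertices $x \in \Z^s$. By Lemma~\ref{shape}, every non-torsion class in $t\mathbb{HFK}_*(\Gamma,\s)$ is concentrated in lattice grading zero, so any homogeneous non-torsion class admits a representative $\xi = \sum_j \W^{a_j} v_j$ with $v_j \in \Z^s$ vertices and every component living in the same $\gr_t$-grading. Each single vertex $x$ already yields a non-torsion homogeneous class $[x]$ of grading $\gr_t(x)$, so $\Upsilon_{\Gamma,\s}(t) \geq \max_{x\in\Z^s} \gr_t(x)$. Conversely, any such homogeneous non-torsion representative has grading $\gr_t(v_j) - a_j \leq \gr_t(v_j) \leq \max_x \gr_t(x)$, giving equality.

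Next, I would compute $\gr_t(x)$ explicitly for a vertex $x$. Since $\dim(x)=0$ and $w_k(x) = \chi_k(x)$, the identities $\chi_{k+2v_0^*}(x) - \chi_k(x) = -v_0^* \cdot x$ and $\chi_t(x) = \chi_k(x) - \tfrac{t}{2} v_0^* \cdot x$ combine to yield $w_t(x) = (2-t)\chi_k(x) + t\chi_{k+2v_0^*}(x) = 2\chi_t(x)$. Plugging this into the grading formula
\[
\gr_t(\Box) = \dim(\Box) - w_t(\Box) + \frac{k^2+|G|}{4} - t\,\frac{k\cdot F - F^2}{2}
\]
gives $\gr_t(x) = -2\chi_t(x) + \tfrac{k^2+|G|}{4} - t\tfrac{k\cdot F - F^2}{2}$. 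Maximizing over $x$ turns $\max_x$ into $-2\min_x \chi_t(x)$, which together with the constant term reproduces the claimed formula.

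The main obstacle I expect is the first step: arguing carefully that $\Upsilon_{\Gamma,\s}(t)$, a priori a supremum of $\gr_t$ over all $\W$-non-torsion elements of the homology module, is actually realized on a single-vertex $0$-cycle. This uses both the rank-one structure of Lemma~\ref{shape} (ensuring that every vertex represents, up to $\W$-scaling, the same generator of the free part) and the homogeneity restriction built into the definition of $\Upsilon$ (which pins a non-torsion representative to a single grading). Once that reduction is secured, the remainder of the proof is a purely algebraic unpacking of the definitions of $w_t$, $\chi_t$, and the grading shift.
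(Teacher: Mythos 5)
Your proof is correct and follows essentially the same route as the paper: invoke Lemma~\ref{shape} to reduce $\Upsilon_{\Gamma,\s}(t)$ to $\max_{x\in\Z^s}\gr_t(x)$, then unwind $w_t(x)=(2-t)\chi_k(x)+t\chi_{k+2v_0^*}(x)=2\chi_t(x)$ and the grading-shift formula. The paper's version simply compresses the two steps into a single three-line display; your expanded justification of why the maximum is realized on a single vertex is a useful amplification but not a different argument.
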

\begin{proof}As consequence of Lemma \ref{shape} we have that
\begin{align*}\Upsilon_{\Gamma, \s}(t)&= \max_{x\in \Z^s} \gr_t(x)\\
&=-\min_{x\in \Z^s}\Big\{(2-t)\chi_k(x)+t\chi_{k+2v_0^*}(x)\Big\} + \left( \frac{k^2+ |G|}{4}- t \ \frac{k\cdot F- F^2}{2} \right)\\
&=-2\min_{x\in \Z^s}\chi_t(x) + \left( \frac{k^2+ |G|}{4}- t \ \frac{k\cdot F- F^2}{2} \right)\ ,
\end{align*} 
and we are done.
\end{proof}

\section{The surgery exact triangle of the $t$-modified knot homologies}\label{sectionanalytictriangle}
Let $\Sigma$ be a genus $g$ Riemann surface, and let $\boe^1, \dots \boe^k$ be collections of compressing circles for some genus $g$ solid handlebodies $U_{\boe_1}, \dots , U_{\boe_k}$ with boundary $\Sigma$. For $i=1, \dots, k$ let $\Tt_i=\eta_i^1\times  \dots \times \eta_i^g \subset \Sym^g(\Sigma)$ denote the Lagrangian torus associated to $\boe^i=\{\eta_i^1, \dots , \eta_i^g\}$. Without loss of generality we can assume that the various $\eta$-curves intersect transversely, hence  $\Tt_i \pitchfork \Tt_j$ for $i\not=j$. Given intersection points $\x_i \in \Tt_{i-1} \cap  \Tt_{i} $ for $i=1, \dots, k$ and $\y \in \Tt_1 \cap \Tt_k$,  denote by $\pi_2(\x_1, \dots, \x_k, \y)$ the set of homotopy classes of continuous maps $u: D^2 \to \Sym^g(\Sigma)$ with domain the complex unit disk $D^2$ with $k+1$ marked points $z_0, z_1, \dots z_k \in \partial D^2$ (lying in the order on the unit circle) such that
\begin{itemize}
\item $u(z_i)=\x_i$ for $i=1, \dots , k$ and $u(z_0)=\y$
\item $u(a_i) \subset \Tt_i$ where  $a_i \subset \partial D^2$ denotes the boundary arc in between $z_i$ and $z_{i+1}$ for $i=0, \dots ,k$ (mod $k$).
\end{itemize}
We will be interested in the moduli spaces $\mathcal{M}(P)$ of pseudo-holomorphic representatives of a given homotopy class $P \in \pi_2(\x_1, \dots, \x_k, \y)$, \text{i.e.} maps  $u: D^2 \to \Sym^g(\Sigma)$ in $P$ solving  the Cauchy-Riemann equation on the interior of the unit disk. Note that for $k \geq 4$ the source of these maps themselves have moduli: if $\mathcal{M}_{0,k}$ denotes the moduli space of disks with $k$ punctures on the boundary then $\dimm \mathcal{M}_{0,k}=k-3$. As in the case of pseudo-holomorphic strips discussed in Section \ref{analytictheory}, associated to an homotopy class $P \in \pi_2(\x_1, \dots, \x_k, \y)$ there is a Maslov index $\mu(P) \in \Z$. For a generic choice of perturbations of the Cauchy-Riemann equation, the moduli space $\mathcal{M}(P)$ forms a smooth finite-dimensional manifold of dimension 
\[\dimm \mathcal{M}(P)= \mu(P)+ \dimm \mathcal{M}_{0,k}=\mu(P) + k-3 \ .\]   
We define maps $ f_{\boe_1, \dots ,\boe_k}: CF_t(\Tt_{k-1}, \Tt_k) \otimes_\mathcal{R} \dots \otimes_\mathcal{R} CF_t(\Tt_{1}, \Tt_2) \to CF_t(\Tt_1, \Tt_k)$ by counting pseudo-holomorphic $k$-gons
\[f_{\boe_1, \dots ,\boe_k}(\x_k \otimes \dots \otimes \x_1)= \sum_{y \in \Tt_0 \cap \Tt_k} \sum_{\mu(P)=3-k} \# \mathcal{M}(P)  \ \W^{tn_z(P)+ (2-t)n_w(P)} \cdot \y \ .\]

An inspection of the ends of moduli spaces of pseudo-holomorphic $k$-gons with Maslov index $\mu=2-k$ shows that the maps $f_{\boe_1, \dots ,\boe_k}$ satisfy the so called $A_\infty$-relations:
\[\sum_{0\leq i < j \leq k}f_{\boe_1, \dots, \boe_i, \boe_j, \dots ,\boe_k}(\x_1\otimes \dots \otimes\x_{i-1}\otimes f_{\boe_i, \dots ,\boe_j}(\x_{i}\otimes \dots \otimes \x_{j-1})\otimes \x_{j}\otimes \dots \otimes \x_k )=0 \ . \]
We will be interested in these relations for low values of $k$. If we set $\x \cdot \y=f_{\boe_1, \boe_2, \boe_3}(\x\otimes \y )$ then the $A_\infty$-relations for $k=4$ read as
\begin{equation}
\partial_t (\x\cdot \y)=\partial_t \x\cdot \y +\x\cdot \partial_t\y \ ,
\end{equation}
proving that $\x \cdot \y$ satisfies the Leibeniz rule. 
Note that this product operation is not associative. On the other hand, for $k=5$ the $A_\infty$-relations say that so happens up to homotopy. More precisely we have that:
\begin{equation}
(\x \cdot \y) \cdot\boldsymbol{z}+ \x \cdot (\y \cdot \boldsymbol{z})= \partial_t f_{\boe_1, \boe_2, \boe_3, \boe_4}(\x \otimes  \y \otimes \boldsymbol{z})+f_{\boe_1, \boe_2, \boe_3, \boe_4}(\partial_t(\x \otimes \y \otimes \boldsymbol{z}))  \ . 
\end{equation}

Suppose now that $K \subset Y$ is a knot in a rational homology sphere and that $C \subset Y\setminus K$ is a framed loop in its complement. We wish to establish an exact triangle of the form
\begin{equation}\label{exactanalytic}
\begin{tikzcd}[column sep=small]
 \Kt(Y,K) \arrow{rr} &     & \Kt(Y_\lambda(C), K) \arrow{ld} \\
& \Kt(Y_{\lambda+\mu}(C), K)  \arrow{lu}   & 
\end{tikzcd} 
\end{equation}
where $\lambda$ denotes the chosen longitude of $C$, and $\mu$ a meridian. To this end we model the triple $(Y, Y_\lambda(C), Y_{\lambda+\mu}(C))$ by means of four collections of compressing circles $\boa, \bob, \bog$ and $\bod$ on a genus $g$ Riemann surface $\Sigma$. More precisely we choose:
\begin{itemize}
\item the $\alpha$- and the $\beta$-curves so that $(\Sigma, \boa, \bob)$ forms a Hegaard diagram of $Y$,
\item the first $\beta$-curve $\beta_1$ to be a meridian $\mu$ of $C$, the first $\gamma$-curve $\gamma_1$ to be the longitude $\lambda$, and the first of the $\delta$-curves to be a curve  of type $\lambda+\mu$,
\item the last $g-1$ $\gamma$- and $\delta$-curves to be small Hamiltonian translates of the corresponding last $g-1$ $\beta$-curves.
\end{itemize}     
Note that the base points $z$ and $w$ can be chosen so that $\mathcal{H}_{\boa, \bob}=(\Sigma, \boa, \bob, z, w)$ represents $(Y, K)$, $\mathcal{H}_{\boa, \bog}=(\Sigma, \boa, \bog, z, w)$ represents $(Y_\lambda(C), K)$, and $\mathcal{H}_{\boa, \bod}=(\Sigma, \boa, \bod, z, w)$ represents $(Y_{\lambda+\mu}(C), K)$. Notice that we can assume: $\beta_2$ to be a meridian of $K$, the two base points $z$ and $w$ to lie near to the two sides of $\beta_2$, and the Hamiltonian isotopies sending $\{\beta_2, \dots, \beta_g\}$ onto $\{\gamma_2, \dots, \gamma_g\}$ and  $\{\delta_2, \dots, \delta_g\}$ to not cross the two basepoints. 

We now introduce a triangle of maps
\begin{equation} 
\begin{tikzcd}[column sep=small]
 CF_t(\Tt_\alpha, \Tt_\beta) \arrow{rr}{F_{\bob, \bog}} &     & CF_t(\Tt_\alpha, \Tt_\gamma) \arrow{ld}{F_{\bog , \bod}} \\
& CF_t(\Tt_\alpha, \Tt_\delta) \arrow{lu}{F_{\bod , \bob}}   & 
\end{tikzcd}
\end{equation}
inducing in homology the maps appearing in \eqref{exactanalytic}. First we observe that, since the two basepoints $z$ and $w$ lie on the same connected component of $\Sigma \setminus \bob \cup \bog$, we have an identification $H_*(CF_t(\Tt_\beta, \Tt_\gamma))=\Lambda^*H_1(T^{g-1}) \otimes \mathcal{R}$. In fact, the same equality holds for  $H_*(CF_t(\Tt_\gamma, \Tt_\delta))$ and $H_*(CF_t(\Tt_\delta, \Tt_\beta))$. Denote by $\Theta_{\beta, \gamma}$, $\Theta_{\gamma, \delta}$ and $\Theta_{\delta,\beta}$ the  cycles descending to the top-dimensional generator of $\Lambda^*H_1(T^{g-1}) \otimes \mathcal{R}$ in $CF_t(\Tt_\beta, \Tt_\gamma)$, $CF_t(\Tt_\gamma, \Tt_\delta)$, and $CF_t(\Tt_\delta, \Tt_\beta)$ respectively. We define $F_{\bob, \bog}: CF_t(\Tt_\alpha, \Tt_\beta) \to CF_t(\Tt_\alpha, \Tt_\gamma)$ by $F_{\bob, \bog}(x)=x \cdot \Theta_{\beta, \gamma}$. Note that $F_{\bob, \bog}$ is a chain map:
\[\partial_t F_{\bob, \bog}(x)=\partial_t(x \cdot \Theta_{\beta, \gamma})=
\partial_t x \cdot \Theta_{\beta, \gamma} + \cancel{x \cdot \partial_t \Theta_{\beta, \gamma}}= \partial_t x \cdot \Theta_{\beta, \gamma}= F_{\bob, \bog}(\partial_t x) \ . \]
Analogously we define chain maps $F_{\bog, \bod}: CF_t(\Tt_\alpha, \Tt_\gamma) \to CF_t(\Tt_\alpha, \Tt_\delta)$ and $F_{\bod, \bob}: CF_t(\Tt_\alpha, \Tt_\delta) \to CF_t(\Tt_\alpha, \Tt_\beta)$ using the top-dimensional generators $\Theta_{\gamma, \delta}$ and $\Theta_{\delta,\beta}$. 

We now consider the triangle of maps induced in homology by $F_{\bob, \bog},F_{\bog, \bod}$ and $F_{\bod, \bob}$. We would like to show that the composition of two consecutive maps is zero. To this end define $H_{ \bob, \bog, \bod}: CF_t(\Tt_\alpha, \Tt_\beta) \to CF_t(\Tt_\alpha, \Tt_\delta)$ by counting pseudo hlomorphic rectangles: $H_{ \bob, \bog, \bod}(x)=f_{ \boa, \bob, \bog, \bod}( x \otimes \Theta_{\beta, \gamma}\otimes \Theta_{ \gamma, \delta})$. In this case the $A_\infty$-relations prescribe the identity
\begin{align*}
F_{\bog, \bod}\circ F_{\bob, \bog}(x)&= (x \cdot \Theta_{\beta, \gamma}) \cdot \Theta_{\gamma, \delta} \\
&=x \cdot (\Theta_{\beta, \gamma} \cdot \Theta_{\gamma, \delta})  \\
&+\partial_t( f_{ \boa, \bob, \bog, \bod}(x \otimes \Theta_{\beta, \gamma} \otimes \Theta_{\gamma, \delta})) + f_{ \boa, \bob, \bog, \bod}(\partial_t x \otimes \Theta_{\beta, \gamma} \otimes \Theta_{\gamma, \delta}) \\ 
&+\cancel{f_{ \boa, \bob, \bog, \bod}(x \otimes \partial_t \Theta_{\beta, \gamma}\otimes \Theta_{\gamma, \delta})}+\cancel{f_{ \boa, \bob, \bog, \bod}(x \otimes \Theta_{\beta, \gamma} \otimes \partial_t \Theta_{\gamma, \delta})} \\
&=x \cdot (\Theta_{\beta, \gamma} \cdot \Theta_{\gamma, \delta}) +\partial_tH_{ \bob, \bog, \bod}(x) + H_{ \bob, \bog, \bod}(\partial_t x) \ .  
\end{align*}
On the other hand: $\Theta_{\beta, \gamma} \cdot \Theta_{\gamma, \delta}=0$ based on the very same neck stretching argument of \cite{}. Hence,
\[F_{\bog, \bod}\circ F_{\bob, \bog}= \partial_t \circ H_{ \bob, \bog, \bod} + H_{ \bob, \bog, \bod}\circ \partial_t \ , \]
showing that the composition $F_{\bog, \bod}\circ F_{\bob, \bog}$ is null-homotopic via the map $H_{ \bob, \bog, \bod}$.
Similarly one defines homotopy equivalences $H_{\bog,\bod, \bob}: CF_t(\Tt_\alpha, \Tt_\gamma) \to CF_t(\Tt_\alpha, \Tt_\beta)$ and $H_{\bod,\bob, \bog}: CF_t(\Tt_\alpha, \Tt_\delta) \to CF_t(\Tt_\alpha, \Tt_\gamma)$ for $F_{\bod, \bob} \circ  F_{\bog, \bod}$, and $F_{\bob, \bog} \circ  F_{\bod, \bob}$.

Finally one would like to show that the triangle of maps induced by $F_{\bob, \bog}, F_{\bog, \bod}$ and $F_{\bod, \bob}$
has trivial homology (exactness). This will be based on the following algebraic lemma.

\begin{lem}\label{algebra}Let $\{A_i\}_{i\in\Z}$ be a collection of chain complexes, and let $\{f_i:A_i\to A_{i+1}\}_{i\in \Z}$ be a collection of chain maps satisfying the following two properties:
\begin{enumerate}
\item\label{condition} $f_{i+1}\circ f_i$ is chain homotopically trivial via a chain homotopy $H_i:A_i \to A_{i+1}$,
\item the map $\psi_i=f_{i+2}\circ H_i+H_{i+1}\circ f_i$ is a quasi-isomorphism. 
\end{enumerate} 
Then $H_*(\text{Cone}(f_i))\simeq H_*(A_{i+2})$, where $\text{Cone}(f_i)$ denotes the mapping cone of $f_i$.  
\end{lem}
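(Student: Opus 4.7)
My plan is to construct a natural chain map $\Phi_i \colon \text{Cone}(f_i) \to A_{i+2}$ which will realize the claimed quasi-isomorphism, and then to use the octahedral axiom to bootstrap from hypothesis (2). The key formula is
\[ \Phi_i(a, b) = H_i(a) + f_{i+1}(b) \quad \text{for } (a, b) \in A_i[1] \oplus A_{i+1} = \text{Cone}(f_i). \]
That $\Phi_i$ commutes with the differential is precisely the statement $\partial H_i + H_i \partial = f_{i+1} \circ f_i$, i.e.\ hypothesis (1). The lemma therefore reduces to proving that the iterated mapping cone $T_i := \text{Cone}(\Phi_i)$ is acyclic.

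To attack this, I would write $T_i = A_i[2] \oplus A_{i+1}[1] \oplus A_{i+2}$ and observe that the subcomplex $A_{i+1}[1] \oplus A_{i+2}$ is canonically identified with $\text{Cone}(f_{i+1})$, with quotient $A_i[2]$. The connecting morphism of the resulting short exact sequence is the chain map $\delta_i \colon A_i \to \text{Cone}(f_{i+1})$ given by $\delta_i(a) = (f_i(a), H_i(a))$; verifying $\delta_i$ is a chain map over $\mathbb{F}$ is yet again an application of hypothesis (1). The standard identification of the middle term of a short exact sequence with the mapping cone of its connecting morphism (shifted appropriately) then yields a quasi-isomorphism $T_i \simeq \text{Cone}(\delta_i)$ up to a fixed overall shift.

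At this point hypothesis (2) enters via a direct computation:
\[ \Phi_{i+1} \circ \delta_i(a) = H_{i+1}(f_i(a)) + f_{i+2}(H_i(a)) = \psi_i(a), \]
where $\Phi_{i+1} \colon \text{Cone}(f_{i+1}) \to A_{i+3}$ is built from $H_{i+1}$ and $f_{i+2}$ in the same manner as $\Phi_i$. Applying the octahedral axiom to the composition $A_i \xrightarrow{\delta_i} \text{Cone}(f_{i+1}) \xrightarrow{\Phi_{i+1}} A_{i+3}$ produces an exact triangle
\[ \text{Cone}(\delta_i) \to \text{Cone}(\psi_i) \to \text{Cone}(\Phi_{i+1}) = T_{i+1} \to \text{Cone}(\delta_i)[1]. \]
Since $\psi_i$ is a quasi-isomorphism by hypothesis (2), $\text{Cone}(\psi_i)$ is acyclic, and the triangle collapses to a quasi-isomorphism $T_i \simeq T_{i+1}$ shifted by a fixed integer.

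The hard part is the final step: promoting the periodic identification $T_i \simeq T_{i+1}[n]$ to outright acyclicity of $T_i$. Iterating gives $H_*(T_i) \cong H_{*-kn}(T_{i+k})$ for every $k \geq 1$, which forces vanishing provided the homology of each $T_j$ lies in a half-infinite grading range. In the intended application of Section \ref{sectionanalytictriangle}, the $\mathcal{R}$-module structure of the $t$-modified complexes, combined with the behaviour of the Maslov grading, supplies exactly this boundedness and closes out the argument. Careful tracking of shifts in the octahedral axiom, and of chain-level signs—most of which collapse on account of working over $\mathbb{F}$—is the main technical burden throughout.
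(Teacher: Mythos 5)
The paper does not actually supply a proof of Lemma~\ref{algebra}; it is used as a standard tool, going back to Ozsv\'ath and Szab\'o's proof of the surgery exact triangle. So your attempt must be judged on its own terms rather than against a written argument in the text.

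The first half of your proposal is correct and is how one typically opens: defining $\Phi_i(a,b)=H_i(a)+f_{i+1}(b)$ as a chain map $\text{Cone}(f_i)\to A_{i+2}$ is the right move, the identification of the subcomplex $\text{Cone}(f_{i+1})\subset T_i=\text{Cone}(\Phi_i)$ with quotient $A_i[2]$ and connecting morphism $\delta_i(a)=(f_i(a),H_i(a))$ is correct, and the octahedral computation $\Phi_{i+1}\circ\delta_i=\psi_i$ together with acyclicity of $\text{Cone}(\psi_i)$ does yield a quasi-isomorphism $T_i\simeq T_{i+1}[1]$. All of this is sound over $\F$.

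The gap is in the last step, and you have correctly identified it yourself but not resolved it. Periodicity $H_*(T_i)\cong H_{*-k}(T_{i+k})$ does not, by itself, force vanishing: one needs not merely that each $H_*(T_j)$ lies in a half-infinite grading range, but that the upper bounds $N_j$ with $H_m(T_j)=0$ for $m>N_j$ grow strictly slower than $j$, a \emph{uniform} condition you never articulate. More importantly, the lemma as stated carries no finiteness or boundedness hypothesis whatsoever, so an argument that appeals to gradedness or finite generation of the $A_i$ proves something strictly weaker than what is asserted. The lemma in fact holds over a field with no boundedness: one can see this by transporting the data to the homology complexes $(H_*(A_i),0)$ via homotopy transfer, where the hypotheses become $\bar f_{i+1}\bar f_i=0$ and $\bar f_{i+2}\bar H_i+\bar H_{i+1}\bar f_i$ an isomorphism, and the conclusion reduces to elementary linear algebra; alternatively, one can show $(\Phi_i)_*$ is injective by a more careful direct analysis of $\ker(\Phi_i)_*\subset H_*(\text{Cone}(f_i))$ using the whole family of isomorphisms $(\psi_j)_*$. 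Finally, even for the intended application in Section~\ref{sectionanalytictriangle}, the required identification $T_{j+3}\simeq T_j$ is not automatic from the Hamiltonian-translate setup: it demands that the continuation maps commute, up to homotopy, with the $f$'s and $H$'s, which is an additional naturality statement you would need to quote or verify.
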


\begin{proof}[Proof of Theorem \ref{exactanalyticthm}]
We proceed in analogy with the proof of the exact triangle 
\begin{equation} \label{exacthat}
\begin{tikzcd}[column sep=small]
 \widehat{\text{HFK}}(Y, K) \arrow{rr} &     &  \widehat{\text{HFK}}(Y_\lambda(C),  K) \arrow{ld} \\
& \widehat{\text{HFK}}( Y_{\lambda+\mu}(C), K) \arrow{lu}   & 
\end{tikzcd}
\end{equation}
established by Ozsv\' ath and Szab\' o \cite[Theorem 8.2]{OS7}. See also \cite[Section 2]{OS6}.

To meet precisely the hypothesis of Lemma \ref{algebra} we choose a sequence of Hamiltonian translates $\bob^{(i)}, \bog^{(i)}$ and $\bod^{(i)}$ of the $\beta$-curves, the $\gamma$-curves, and the $\delta$-curves. Set $A_{3i}=CF_t(\T_{\boa},\T_{\bob^{(i)}})$, $A_{3i+1}=CF_t(\T_{\boa},\T_{\bog^{(i)}})$, and $A_{3i}=CF_t(\T_{\boa},\T_{\bod^{(i)}})$, and note that there are obvious identifications $A_{3i}=CF_t(\T_{\boa},\T_{\bob})$, $A_{3i+1}=CF_t(\T_{\boa},\T_{\bog})$, and $A_{3i}=CF_t(\T_{\boa},\T_{\bod})$. Furthermore, we define $f_{3i}=F_{\bob^{(i)},\bog^{(i)}}$, $f_{3i+1}=F_{\bog^{(i)},\bod^{(i)}}$, and $f_{3i+2}=F_{\bod^{(i)},\bob^{(i)}}$. Similarly, we take $H_{3i}=H_{ \bob^{(i)}, \bog^{(i)}, \bod^{(i)}}$, $H_{3i+1}=H_{\bog^{(i)},\bod^{(i)}, \bob^{(i)}}$ and $H_{3i+2}: H_{\bod^{(i)},\bob^{(i)}, \bog^{(i)}}$ so that condition \eqref{condition} of Lemma \ref{algebra} is met.

Writing down the $A_\infty$-relations for $k=5$ (the one coming from the count of pseudo-holomorphic pentagons) we get that
\begin{align*}
0&= f_{i+2}(H_i(x))+H_{i+1}( f_i(x))\\
&+ f_{\boa,\bog^{(i)}, \bod^{(i)}, \bog^{(i+1)}}(x \otimes \Theta_{\bog^{(i)}, \bod^{(i)}} \otimes \cancel{\Theta_{\bod^{(i)}, \bob^{(i)}} \cdot \Theta_{\bob^{(i)}, \bog^{(i+1)}}} )\\
&+ f_{\boa,\bog^{(i)}, \bob^{(i)}, \bog^{(i+1)}}(x \otimes \cancel{\Theta_{\bog^{(i)}, \bod^{(i)}} \cdot \Theta_{\bod^{(i)}, \bob^{(i)}}} \otimes \Theta_{\bob^{(i)}, \bog^{(i+1)}})\\
&+x \cdot H_{\bog^{(i)},\bod^{(i)}, \bob^{(i)}, \bog^{(i+1)}}
(\Theta_{\bog^{(i)}, \bod^{(i)}} \otimes \Theta_{\bod^{(i)}, \bob^{(i)}} \otimes \Theta_{\bob^{(i)}, \bog^{(i+1)}}) \ .
\end{align*}  

Hence, in order to show that the exact triangle holds, we must show that the map
\[\psi_i: x \mapsto x \cdot H_{\bog^{(i)},\bod^{(i)}, \bob^{(i)}, \bog^{(i+1)}}
(\Theta_{\bog^{(i)}, \bod^{(i)}} \otimes \Theta_{\bod^{(i)}, \bob^{(i)}} \otimes \Theta_{\bob^{(i)}, \bog^{(i+1)}})  \]
induces an isomorphism in homology. To this end we observe that specialising $q=0$ in the chain complex $CF_t$ we get the hat version of knot Floer homology $\widehat{CFK}$ (with the real-valued $\text{gr}_t$-grading instead of the usual bi-grading), and that $\psi_i$ is a qusi-isomorphism provided that $\widehat{\psi}_i$ (its restriction to $\widehat{CFK}$) is a quasi-isomorphism. On the other hand, the map $\widehat{\psi}_i$  was shown to be a quasi-isomorphism in Ozsv\' ath and Szab\' o's proof of the exact triangle  \cite[Theorem 8.2]{OS7}.      
\end{proof}

\section{A surgery exact sequence for deformations of lattice cohomology}\label{combinatorialexactsequence}
Let $\Gamma$ be a plumbing graph with one unframed vertex $v_0$. Let $G=\Gamma-v_0$ and $v\in G$ be a vertex that is \textit{not} directly connected to $v_0$. We denote by $G_{+1}(v)$ the graph obtained from $G$ by increasing the weight of $v$ by one, and by $G'(v)$ the graph obtained from $G$ by adding a $(-1)$-framed vertex $e$ connected to $v$. Let $\Gamma_{+1}(v)$ and $\Gamma'(v)$ the graphs obtained similarly from $\Gamma$. These represent knots in $Y(G_{+1}(v))$, $Y(G'(v))$, and $Y(G-v)$.

\begin{thm}\label{latticeexact}There is a long exact sequence of $\mathcal{R}$-modules
\[ \xymatrix{
 \ar[r] & t\mathbb{HFK}_p(\Gamma-v) \ar[r]^{\ \ \  \phi_*  } &  t\mathbb{HFK}_p(\Gamma) \ar[r]^{\psi_* \ \ \ }  & t\mathbb{HFK}_p(\Gamma_{+1}(v))  \ar[r]^{\ \  \delta} & t\mathbb{HFK}_{p-1}(\Gamma) \ar[r] &} \ .\]
\end{thm}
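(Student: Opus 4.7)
My plan is to realize Theorem~\ref{latticeexact} as the long exact sequence induced by a short exact sequence of $\mathcal{R}$-chain complexes
\[0 \longrightarrow \CF_t(\Gamma - v) \xrightarrow{\ \phi\ } \CF_t(\Gamma) \xrightarrow{\ \psi\ } \CF_t(\Gamma_{+1}(v)) \longrightarrow 0,\]
following the template used by Ozsv\'ath, Stipsicz, and Szab\'o in \cite{Lspaceknots} for the analogous undeformed triangle, adapted here to the $t$-deformed setting.

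\textbf{Compatibility of level functions.} Fix a characteristic vector $k$ of $G$ representing $\s$. Because $v$ is not adjacent to $v_0$, the dual class $v_0^*$ satisfies $v_0^*(v) = 0$; therefore the restriction $k_0 := k|_{G-v}$ is a characteristic vector for $G - v$, and $k_1 := k + v^*$ (with $v^*$ the dual basis covector at $v$) is a characteristic vector for $G_{+1}(v)$ once the self-intersection of $v$ is raised by one. A direct calculation of the twisted Riemann--Roch function, using $v_0^*(v) = 0$ and $x^2_{\Gamma_{+1}(v)} = x^2_\Gamma + x_v^2$, yields the two compatibility formulas
\[\chi_t^{\Gamma}(x, 0) = \chi_t^{\Gamma - v}(x), \qquad \chi_t^{\Gamma_{+1}(v)}(x) = \chi_t^{\Gamma}(x) - \tfrac{1}{2} x_v (x_v + 1),\]
which govern the comparison of the three level functions $w_t$ and hence of the three filtrations.

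\textbf{Definitions of $\phi$ and $\psi$.} The map $\phi$ is the cellular inclusion $\R^{s-1} \hookrightarrow \R^s$ along the hyperplane $\{x_v = 0\}$, sending a basis cell $(\ell, I)$ of $\CF_t(\Gamma - v)$ to $(\ell \times \{0\}, I)$ of $\CF_t(\Gamma)$. The first identity above forces $w_t^{\Gamma}$ to restrict to $w_t^{\Gamma - v}$ along this hyperplane, so $\phi$ is a filtered chain map of $\mathcal{R}$-modules and is manifestly injective. The map $\psi$ is defined cellularly by the direct analog of the formula from \cite{Lspaceknots}, with each $U^j$-weight in that formula replaced by an appropriate $\W$-power whose exponent is read off from the correction $\tfrac12 x_v(x_v+1)$ of the second identity above; the identification $U = \W^2$ in the undeformed directions then dictates how these weights are distributed, and the formula extends to a well-defined $\mathcal{R}$-linear map on the full persistent homology module.

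\textbf{Main obstacle and conclusion.} The core technical step is verifying that $\psi$ commutes with $\partial_t$ and that $\ker \psi = \mathrm{im}\,\phi$: this requires a case-by-case check, depending on whether $v \in I$ and on the value of $\ell_v$, that the $\W$-weights in the definition of $\psi$ mesh correctly with those appearing in the differential \eqref{latticedifferential}. This is also precisely where the hypothesis $v \not\sim v_0$ is essential, since otherwise the $v_0^*$-twist in $\chi_t$ would couple the $v$-direction to the rest of the lattice and break the clean decomposition of level functions above. Once the short exact sequence is in place, the long exact sequence of Theorem~\ref{latticeexact} follows immediately from the snake lemma, with the connecting map $\delta$ being the standard zigzag homomorphism.
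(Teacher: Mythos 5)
Your proposed short exact sequence has the right shape, and your compatibility identities for $\chi_t$ under the operations $\Gamma \rightsquigarrow \Gamma-v$ and $\Gamma \rightsquigarrow \Gamma_{+1}(v)$ (using $v_0^*(v)=0$) are correct and explain why the hypothesis $v\not\sim v_0$ is needed. However, the definition you give for $\phi$ is not the right map, and with it the sequence is not even a complex, let alone short exact.

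The paper's map $A_t=\psi_v\colon \CF_t(\Gamma-v)\to\CF_t(\Gamma)$ is \emph{not} the single-layer cellular inclusion of the hyperplane $\{x_v=0\}\subset\R^s$. In the $[K,E]$ notation it is the infinite sum
\[
\psi_v[K,E]=\sum_{p\equiv v^2\ (\mathrm{mod}\ 2)}[K,p,E],
\]
in other words a generator of $\CF_t(\Gamma-v)$ is sent to the formal sum of its lifts over \emph{all} $v$-levels compatible with the characteristic condition, not just the level $0$ lift. This is essential: the second map $B_t$ is built from adding a $(-1)$-vertex and then projecting, and the vanishing $B_t\circ A_t=0$ is a telescoping cancellation across this infinite sum (the $(p,m)$ term cancels against the $(p+4m-2,\,-m+1)$ term). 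With your single-hyperplane $\phi$, the composite $\psi\circ\phi$ does not vanish, and the cokernel of $\phi$ is nothing like $\CF_t(\Gamma_{+1}(v))$ — it is essentially all of $\CF_t(\Gamma)$ away from one hyperplane, a vastly bigger module. So the "manifest" exactness you invoke fails at the first step. Note also that this long exact sequence does not respect a fixed $\Spinc$ structure (the first homologies of the three plumbed manifolds differ), which is another reason a one-hyperplane inclusion cannot be the right map.

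One smaller point of comparison: to prove exactness of the short exact sequence, you propose a direct case-by-case check of weights. The paper instead sets $q=0$ to reduce to the already-known undeformed (hat) exact sequence of Ozsv\'ath--Stipsicz--Szab\'o and then lifts the acyclicity back to $\mathcal{R}$ via the structure theorem for $\mathcal{R}$-modules (cyclic decomposition) together with a Universal Coefficients argument. Your direct approach could in principle work once the maps are corrected, but the $q=0$ reduction is both shorter and less error-prone, and it is where the deformation parameter $t$ genuinely drops out of the argument.
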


The exact sequence of Theorem \ref{latticeexact} is obtained by applying the Snake Lemma to a short exact sequence 
\begin{equation}\label{shortexact}
\xymatrix{
0 \ar[r] &\mathbb{CF}_t(\Gamma-v)  \ar[r]^{ \ \ \  A_t  } &  \mathbb{CF}_t(\Gamma)  \ar[r]^{B_t \ \ \ }  & \mathbb{CF}_t(\Gamma_{+1}(v))  \ar[r] & 0} 
\end{equation}   
preserving the lattice grading. To describe the maps $A_t$ and $B_t$ appearing in \eqref{shortexact} we resort to some convenient notation introduced by Ozsv\' ath, Stipsicz, and Szab\' o \cite{OSS1}. This has the advantage that no choice of ground characteristic vector is needed in the definition of the differential. Passing from one notation to the other consists in choosing an origin for the affine space of characteristic vectors associated to the plumbing.       

Let $G$ be a negative-definite plumbing tree. Let $v_1, \dots, v_s$ denote the vertices of $G$. For every pair $[K, E]$, with $E\subseteq \{v_1, \dots, v_s\}$, and $K\in H^2(X(G); \Z)$ characteristic, define 
\[2 f_G(K,I)=  \sum_{v\in I} K(v) +\left( \sum_{v\in I} v  \right) \cdot \left( \sum_{v\in I} v  \right) = K\cdot I + I^2 \ ,\] 
and take 
\[g_G[K, E]= \min_{I \subseteq E} f(K,I) \ .\] 
Then $\CF^-(G)$ can be identified with the free $\F[U]$-module formally generated by all such pairs $[K, E]$ with differential 
\[\partial[K,E]= \sum_{v \in E } U^{a_v[K,E]}\otimes [K, E- v] + \sum_{v \in E } U^{b_v[K,E]} \otimes [K+2v^*, E- v] \ ,  \]  
where:
\begin{align*}
a_v[K,E]&= g[K,E-v]-g[K,E] \ , \\ 
b_v[K,E]&= g[K+2v^*,E-v]-g[K,E]+ \frac{K\cdot v+v^2}{2} \ .
\end{align*}
 
If $\Gamma$ is a negative-definite plumbing diagram with one unframed vertex $v_0$ representing a knot in $Y(G)$, $G=\Gamma\setminus\{v_0\}$, then the differential of $\CF_t(\Gamma)=\CF(G) \otimes \mathcal{R}$ is given by
\[\partial_t[K,E]= \sum_{v \in E } q^{a_v(t)}\otimes [K, E- v] + \sum_{v \in E } q^{b_v(t)} \otimes [K+2v^*, E- v] \ ,  \] 
where $a_v(t)= (2-t)a_v[K,E]+t a_v[K+2v_0^*, E]$, and similarly $b_v(t)= (2-t)b_v[K,E]+t b_v[K+2v_0^*, E]$. In this context the $\gr_t$-grading is given by
\[\gr_t[K,E]={}^tg_G[K,E]+|E|+\frac{K^2+|G|}{4}-t \ \frac{K\cdot F-F^2}{2} \ ,\] 
where we set ${}^tg_G[K,E]= (2-t)g_G[K,E]+tg_G[K+2v_0^*,E]$.

Let now $v$ be a vertex of $G$. We define $\psi_v: \mathbb{CF}_t(\Gamma-v) \to  \mathbb{CF}_t(\Gamma)$ by
\[\psi_v[K,E] =\sum_{p \equiv v^2 (mod \ 2)}[K, p, E]\]
where  $[K, p, E]$ stands for the generator of $\mathbb{CF}_t(\Gamma)$ associated to the subset $E$, and the characteristic vector $(K,p) \in \Char(G)$ extending $K \in \Char(G-v)$ by $K(v)=p$. (The fact $p \equiv v^2 \ \text{mod } 2$ ensures that $(K,p)$ is actually characteristic.)
 
\begin{prop} $\partial_t \circ \psi_v= \psi_v  \circ \partial_t$.  
\end{prop}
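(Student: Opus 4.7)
The plan is to verify the identity directly on a generator $[K,E]$ of $\mathbb{CF}_t(\Gamma-v)$ by expanding both $\partial_t\circ\psi_v[K,E]$ and $\psi_v\circ\partial_t[K,E]$ and matching terms. Since $\partial_t$ is built entirely from the exponents $a_w(t), b_w(t)$, the only thing to check is that these exponents, computed in $G$ with respect to the extended characteristic vector $(K,p)$, agree for every admissible $p$ with those computed in $G-v$ with respect to $K$. Once this is established, the two sums coincide termwise (with $p$ as an outer summation index in both cases).

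The key observation is purely combinatorial: if $I\subseteq E\subseteq G-v$, then $(K,p)\cdot I + I^{2}$ does not involve the $v$-coordinate of $(K,p)$, because no summand in $I$ equals $v$. Hence $f_G((K,p),I)=f_{G-v}(K,I)$ for all $p$, and consequently
\[
g_G[(K,p),E]=g_{G-v}[K,E],\qquad g_G[(K,p)+2w^{*},E-w]=g_{G-v}[K+2w^{*},E-w]
\]
for every $w\in E$ (here I use that $w\ne v$, so adding $2w^{*}$ does not alter the $v$-component). From this one deduces immediately that $a_w[(K,p),E]=a_w[K,E]$ and $b_w[(K,p),E]=b_w[K,E]$, independently of $p$.

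To upgrade this from the untwisted exponents to the $t$-twisted exponents $a_w(t),b_w(t)$, which are defined using the shift by $2v_0^{*}$, one must invoke the standing hypothesis that $v$ is \textbf{not} directly connected to $v_0$. This is precisely what forces $v_0^{*}$ to have vanishing $v$-component, so that
\[
(K,p)+2v_0^{*}=(K+2v_0^{*},p),
\]
and the previous argument applies verbatim to $K+2v_0^{*}$ in place of $K$. With both $a_w(t)$ and $b_w(t)$ now known to be independent of $p$, the computation becomes mechanical: the double sum $\sum_p \sum_{w\in E}$ in $\partial_t\psi_v[K,E]$ factors as $\sum_{w\in E}\bigl(q^{a_w(t)}\psi_v[K,E-w]+q^{b_w(t)}\psi_v[K+2w^{*},E-w]\bigr)$, which is exactly $\psi_v\partial_t[K,E]$.

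The main (and essentially only) obstacle is bookkeeping: one must correctly identify where the hypothesis "$v$ not adjacent to $v_0$" is used, because without it the $t$-twisted exponents would acquire a genuine $p$-dependence coming from the $v$-component of $(K,p)+2v_0^{*}$, and the commutation would fail. Beyond that, the proof is a formal unwinding of the definitions.
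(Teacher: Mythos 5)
Your proof follows the same route as the paper's: expand both compositions on a generator $[K,E]$, reduce the problem to matching the exponents $a_w(t)$ and $b_w(t)$ computed in $G$ versus $G-v$, and observe that these agree because for any $I\subseteq E\subseteq G-v$ the quantity $K'\cdot I + I^2$ is insensitive to the value of $K'$ at $v$. This is precisely the key identity $f_{G-v}[K,E]=f_G[K,p,E]$ that the paper invokes.

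One small point where your reasoning is off: you justify $g_G[(K,p)+2w^*,E-w]=g_{G-v}[K+2w^*,E-w]$ by saying "adding $2w^*$ does not alter the $v$-component", and likewise you invoke the hypothesis "$v$ not adjacent to $v_0$" to argue that $(K,p)+2v_0^*=(K+2v_0^*,p)$. Neither of these preservation statements is actually true in general (if $w$ is adjacent to $v$, then $(K+2w^*)(v)=K(v)+2(w\cdot v)\neq K(v)$; and the paper's proof works whether or not $v_0^*(v)=0$). But it does not matter: since $E-w\subseteq G-v$, the function $g_G[\,\cdot\,,E-w]$ reads only the components at vertices of $G-v$, so the $v$-component of the characteristic vector is irrelevant regardless of whether it is preserved. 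This is the same observation you correctly use to dispose of the $p$-dependence, and it applies uniformly to handle $2w^*$ and $2v_0^*$ as well. In particular, the hypothesis that $v$ is not adjacent to $v_0$ is not needed for this commutativity proposition — it is a standing assumption in the section that enters elsewhere in the construction of the exact sequence, not here.
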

\begin{proof} One computes 
\begin{align*}
\psi_v\partial_t[K,E]&= \sum_{ w \in E} q^{a_w(t)}\otimes \psi_v[K,  E] + \sum_{ w \in E} q^{b_w(t)}\otimes\psi_v[K, E]\\
&=\sum_{p \equiv v^2 (mod \ 2),\ w \in E} q^{a_w(t)}\otimes[K, p, E] + \sum_{p \equiv v^2 (mod \ 2),\ w \in E} q^{b_w(t)}\otimes[K, p, E] \ ,
\end{align*}
where $a_w(t)= (2-t)a_w[K,E]+t a_w[K+2v_0^*, E]$, and $b_w(t)= (2-t)b_w[K,E]+t b_w[K+2v_0^*, E]$. On the other hand,
\begin{align*}
\partial_t \psi_v[K,E]&= \sum_{p \equiv v^2 (mod \ 2)} \partial_t[K, p, E] \\
&=\sum_{p \equiv v^2 (mod \ 2),\ w \in E} q^{a'_w(t)}\otimes[K, p, E] + \sum_{p \equiv v^2 (mod \ 2),\ w \in E} q^{b'_w(t)}\otimes[K, p, E] \ ,
\end{align*}
where $a'_w(t)= (2-t)a_w[K,p,E]+t a_w[K+2v_0^*,p, E]$, and $b'_w(t)= (2-t)b_w[K,p,E]+t b_w[K+2v_0^*,p, E]$. Comparing the exponents the claim boils down to the following identities 
\[
\begin{cases}
a_w[K,E]&= a_w[K,p,E]\\
b_w[K,E]&= a_w[K,p,E]\\
\end{cases} \ \ \ 
\text{ and } \ \  \
\begin{cases}
a_w[K+2v_0^*,E]&= a_w[K+2v_0^*,p,E]\\
b_w[K+2v_0^*,E]&= a_w[K+2v_0^*,p,E]\\
\end{cases}
\]
these follow from the fact that 
\[f_{G-v}[K,E]=f_G[K,p,E]\ \ \ 
\text{ and } \ \  \	f_{G-v}[K+2v_0^*,E]=f_G[K+2v_0^*,p,E]\]
when the vertex $v$ does not belong to the vertex set  $E$.
\end{proof}

Thus, in \eqref{shortexact} we can take $A_t=\psi_v$. Recall that the graph $\Gamma'(v)$ is obtained from $\Gamma$ by adding a $(-1)$-framed vertex $e$. We define $B_t:\mathbb{CF}_t(\Gamma)  \to \mathbb{CF}_t(\Gamma_{+1}(v))$ as the composition of the map $\psi_e:\mathbb{CF}_t(\Gamma)  \to \mathbb{CF}_t(\Gamma'(v))$ with the map $P_t:\mathbb{CF}_t(\Gamma'(v)) \to \mathbb{CF}_t(\Gamma_{+1}(v))$ defined as follows. 

Let $[K, p, 2m-1, E]$ be the generator of $\mathbb{CF}_t(\Gamma'(v))$ associated to the vertex set $E$, and the characteristic vector $(K, p, 2m-1)\in \Char(\Gamma'(v))$ extending $K \in \Char(G-v)$ to $\Gamma'(v)$ so that $K(v)=p$, and $K(e)=2m-1$. Then we define:
\[P_t[K,p,2m-1]=
\begin{cases}
q^{s_m(t)} \otimes [K, p+2m-1,E] \text{ if } e \not\in E\\
 \ \ \ \ \ \ \text{zero otherwise}  
\end{cases} \ ,\]
where 
\[s_m(t)={}^tg_{G_{+1}(v)}[K, p+2m-1, E]-{}^tg_{G}[K, p, 2m-1, E]+m(m-1) \ .\]

\begin{prop}$P_t$ is well-defined.
\end{prop}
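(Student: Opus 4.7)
The plan is to show that the exponent $s_m(t)$ is non-negative for every $m \in \Z$ and every $t \in [0,2]$. Since $\mathcal{R}$ consists of long power series indexed by a well-ordered subset of $\R_{\geq 0}$, this is precisely what is required for $q^{s_m(t)}$ to lie in $\mathcal{R}$, and hence for the prescription $P_t[K,p,2m-1,E]=q^{s_m(t)}\otimes[K,p+2m-1,E]$ to produce an honest element of $\mathbb{CF}_t(\Gamma_{+1}(v))$.

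I will first establish a blow-down comparison at the level of $f$. Since $\Gamma_{+1}(v)$ differs from $\Gamma'(v)$ only by the removal of the $(-1)$-framed vertex $e$ accompanied by a $+1$ shift of the framing of $v$, a direct expansion of the definition of $f$ shows that, for every $I\subseteq E$ (so that $e\notin I$),
\[f_{\Gamma_{+1}(v)}[K,p+2m-1,I]-f_{\Gamma'(v)}[K,p,2m-1,I]=m\,\chi_{v\in I}.\]
The hypothesis that $v$ is not directly connected to $v_0$ ensures that neither $v$ nor $e$ is adjacent to $v_0$, so $(K+2v_0^*)(v)=p$ and $(K+2v_0^*)(e)=2m-1$, and the identical formula holds with $K$ replaced by $K+2v_0^*$.

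Passing to minima and taking $J^{*}\subseteq E$ to be any minimizer of $f_{\Gamma_{+1}(v)}[K',p+2m-1,\,\cdot\,]$, this gives
\[g_{\Gamma_{+1}(v)}[K',p+2m-1,E]-g_{\Gamma'(v)}[K',p,2m-1,E]\ \geq\ m\,\chi_{v\in J^{*}}\ \geq\ \min(m,0)\]
for each $K'\in\{K,K+2v_0^*\}$. Because ${}^tg$ is an affine combination with weights $2-t$ and $t$ summing to $2$, this yields the uniform bound $s_m(t)\geq 2\min(m,0)+m(m-1)$.

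The plan concludes with the elementary verification that $2\min(m,0)+m(m-1)\geq 0$ on $\Z$: for $m\geq 0$ the expression equals $m(m-1)$, while for $m\leq 0$ it equals $m(m+1)$, both being products of two consecutive integers. I expect the only real technical obstacle to be the blow-down identity in the first step, where one must simultaneously track the framing change at $v$, the new off-diagonal contribution $v\cdot e=1$, and the value $2m-1$ taken by the characteristic vector at $e$; everything downstream is purely algebraic.
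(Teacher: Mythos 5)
Your argument is correct and follows essentially the same route as the paper: the same blow-down comparison $f_{\Gamma_{+1}(v)}-f_{\Gamma'(v)}=m\,\chi_{v\in I}$, the same reduction to the non-negativity of $m(m-1)$ for $m\geq 0$ and $m(m+1)$ for $m\leq 0$. Your treatment of the passage from $f$ to $g$ via a chosen minimizer $J^*$ is tidier than the paper's terse case split on $v\in E$ versus $v\notin E$ (which glosses over the fact that minimizers for $g_{\Gamma_{+1}(v)}$ and $g_{\Gamma'(v)}$ need not coincide), but the substance is identical.
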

\begin{proof}
We must show that $s_m(t)\geq0$. Indeed, $s_m(t)\geq0$ iff both
\begin{equation}\label{A}
g_{G_{+1}(v)}[K, p+2m-1, E]-g_{G}[K, p, 2m-1, E]\geq -\frac{m(m-1)}{2} \ ,
\end{equation} 
and
\begin{equation}\label{B}
g_{G_{+1}(v)}[K+2v_0^*, p+2m-1, E]-g_{G}[K+2v_0^*, p, 2m-1, E]\geq -\frac{m(m-1)}{2} \ . 
\end{equation}
There are two cases. If $v\not\in E$ then for every $I \subseteq E$ we have identities:
\begin{align*}
&f_{G_{+1}(v)}[K, p+2m-1, I]=f_{G}[K, p, 2m-1, I] \\
&f_{G_{+1}(v)}[K+2v_0^*, p+2m-1, I]=f_{G'(v)}[K+2v_0^*, p, 2m-1, I] 
\end{align*}
Hence $s_m(t)=m(m-1)\geq 0$. If on the other hand $v\in I \subseteq E$ we have that  
\begin{align*}
&f_{G_{+1}(v)}[K, p+2m-1, E]-f_{G}[K, p, 2m-1, E] = m  \\ 
&f_{G_{+1}(v)}[K+2v_0^*, p+2m-1, E]-f_{G'(v)}[K+2v_0^*, p, 2m-1, E]=m  \end{align*} 
and we conclude that also in this case $s_m(t)\geq0$.
\end{proof}

\begin{lem} \label{shortexactlemma} The sequence 
\begin{equation}\label{shortexact}
\xymatrix{
0 \ar[r] &\mathbb{CF}_t(\Gamma-v)  \ar[r]^{ \ \ \  A_t  } &  \mathbb{CF}_t(\Gamma)  \ar[r]^{B_t \ \ \ }  & \mathbb{CF}_t(\Gamma_{+1}(v))  \ar[r] & 0} 
\end{equation}
defined as above is short exact.  
\end{lem}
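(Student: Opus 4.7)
The plan is to verify three conditions in turn: injectivity of $A_t$, exactness at the middle, and surjectivity of $B_t$, working in the generator-level description of $\mathbb{CF}_t$ given in Section \ref{construction}.

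Injectivity of $A_t = \psi_v$ is immediate. The image $\psi_v[K, E] = \sum_{p \equiv v^2\ (\bmod\ 2)} [K, p, E]$ is a formal infinite sum, and distinct pairs $(K, E)$ in $\mathbb{CF}_t(\Gamma - v)$ produce sums with disjoint support in $\mathbb{CF}_t(\Gamma)$; hence no non-zero $\mathcal{R}$-linear combination can map to zero.

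The central step is exactness at the middle. For $B_t \circ A_t = 0$, we expand
\[
B_t A_t[K, E] \;=\; \sum_{\substack{p \equiv v^2 \\ m \in \mathbb{Z}}} q^{\,s_m(t;\, K, p, E)}\, [K,\, p + 2m - 1,\, E]
\]
and reindex by $q' = p + 2m - 1$, so that the coefficient of each target $[K, q', E]$ becomes the $\mathcal{R}$-valued series $\sum_{m} q^{\,s_m(t;\, K, q'+1-2m, E)}$. We would prove this vanishes by exhibiting a fixed-point-free pairing of the summation indices under which the corresponding exponents coincide; the $\mathbb{F}$-coefficients then cancel in pairs. The existence of such a pairing is forced by the explicit form of $s_m(t)$ as a combination of the Riemann-Roch functions ${}^t g_{G_{+1}(v)}$ and ${}^t g_{G'(v)}$ with the quadratic correction $m(m-1)$, together with the conjugation-type symmetry $g_G[K, E] = g_G[-K - 2E, E] + f_G[K, E]$ obtained by the substitution $I \leftrightarrow E - I$ in the defining minimum of $g_G$. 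For the reverse inclusion $\ker B_t \subseteq \mathrm{im}\, A_t$, the projection $P_t$ realizes at the chain level the blow-down identification $Y(G'(v)) = Y(G_{+1}(v))$; a chain annihilated by $B_t$ must then have coefficients that are ``constant across $p$-slices" $\{[K|_{G-v}, p, E]\}_p$, and these constant sequences are precisely the image of $\psi_v$.

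For surjectivity, given a generator $[K', E']$ of $\mathbb{CF}_t(\Gamma_{+1}(v))$, we build a preimage starting from $q^{-s_1(t)}\, [K, K'(v) - 1, E']$ with $K = K'|_{G-v}$, which contributes $[K', E']$ as its $m = 1$ summand under $B_t$; the unwanted $m \neq 1$ summands are cleared iteratively by correction terms that converge as long power series in $\mathcal{R}$, thanks to the $|m|$-growth of the exponents $s_m(t)$. The main obstacle will be the pairwise cancellation needed for $B_t \circ A_t = 0$: identifying the correct involution on the double-indexed sum is the only non-routine step, and once it is in hand the remaining verifications reduce to direct manipulation of the explicit formulas defining $A_t$ and $B_t$ together with the well-ordering properties of the coefficient ring $\mathcal{R}$.
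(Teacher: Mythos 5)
Your plan diverges from the paper's proof in a substantial way, and as written it has genuine gaps.

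For $B_t\circ A_t=0$, your idea of a fixed-point-free pairing of the $(p,m)$-indices is the right shape, but you do not actually produce the involution; you invoke a conjugation symmetry of $g_G$ that plays no role here. The paper's pairing is much more elementary: when $v\notin E$ (and $e\notin E$) the twisted $g$-differences cancel, so $s_m(t)\equiv m(m-1)$, a quantity independent of $p$ and symmetric under $m\leftrightarrow 1-m$. The term indexed by $(p,m)$ then cancels against the one indexed by $(p+4m-2,\,-m+1)$, which targets the same generator $[K,p+2m-1,E]$ with the same exponent. Stating that the pairing is ``forced by the explicit form of $s_m(t)$'' leaves exactly the computation that the lemma requires.

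For surjectivity and exactness at the middle your argument, as stated, does not work. Beginning the inversion from $q^{-s_1(t)}[K,K'(v)-1,E']$ presupposes $s_1(t)\le 0$, which is false in general, and $q^{-s_1(t)}$ then lies outside $\mathcal R$ (whose exponents are required to be $\ge 0$). The ``iterative clearing'' and the claim that kernel elements are ``constant across $p$-slices'' are heuristics, not arguments, and both would require delicate control of well-ordered supports in $\mathcal R$. The paper sidesteps all of this: after proving $B_tA_t=0$, it specializes $q=0$ to get a complex of $\mathbb F$-vector spaces whose short exactness is the classical lattice statement (adapting \cite[Proposition~6.6]{OSS1}), then invokes the Universal Coefficients Theorem to get $H^-_\Delta(G,v)\otimes_{\mathcal R}\mathbb F=0$, and finally uses the structure theorem for finitely generated $\mathcal R$-modules (every nonzero such module contains a cyclic summand $\mathcal R/q^\alpha$ with $\mathcal R/q^\alpha\otimes_{\mathcal R}\mathbb F=\mathbb F\ne 0$) to conclude $H^-_\Delta(G,v)=0$. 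That algebraic reduction is the key idea your plan is missing, and without it the injectivity/exactness/surjectivity claims remain unproven except for the easy injectivity of $\psi_v$, which you handle correctly.
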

\begin{proof}
One computes
\begin{equation}\label{pluto}
B_t\circ A_t \ [K,E]=\sum_{m= -\infty}^{+\infty} \  \sum_{p = v^2 (mod \ 2)}   q^{m(m-1)} \otimes [K,p+2m-1,E]
\end{equation} 
where $s_m(t)\equiv m(m-1)$ since ${}^tg_{G_{+1}(v)}[K, p+2m-1, E]={}^tg_{G}[K, p, E]$ in the eventuality that $v$ is not in $E$. Moreover the sum on the right hand side of \eqref{pluto} vanishes since the term corresponding to the parameter $(p,m)$ cancels in pair with the one with parameters $(p+4m-2,-m+1)$. (Here plays the fact that we are using $\F$-coefficients.) Thus, $B_t\circ A_t=0$ proving that the sequence in \eqref{shortexact} forms a five-term chain complex. Denote with $H^-_\Delta(G,v)$ its homology.

If we prove that $H^-_\Delta(G,v)=0$ we are done. A direct proof of this vanishing results can be lengthy and somehow confusing, we will argue along another line. First we note that plugging in $q=0$ in \eqref{shortexact} we get a "time independent" chain complex over the base field $\F$
\begin{equation}\label{paperino}
\xymatrix{
0 \ar[r] &\widehat{\mathbb{CF}}_t(\Gamma-v)  \ar[r]^{ \ \ \  \widehat{A}_t  } &  \widehat{\mathbb{CF}}_t(\Gamma)  \ar[r]^{\widehat{B}_t \ \ \ }  & \widehat{\mathbb{CF}}_t(\Gamma_{+1}(v))  \ar[r] & 0} 
\end{equation}  
This can be verified to be a short exact sequence directly, adapting the computations of \cite[Proposition 6.6]{OSS1}. 

Thus the five-term chain complex in \eqref{paperino} is acyclic. If $\widehat{H}_\Delta(G,v)$ denotes its homology, then we have that $H^-_\Delta(G,v)\otimes_\mathcal{R} \F= \widehat{H}_\Delta(G,v)=0$ by the Universal Coefficients Theorem. (A long power series $f=\sum_{\alpha \in A}q^\alpha \in \mathcal{R}$ acts on $x\in \F$ by multiplication by its constant term $f(0)$.) Based on this we can argue for the vanishing of $H^-_\Delta(G,v)$ as follows: suppose by contradiction that $H^-_\Delta(G,v)\not=0$, then a non-trivial element $x \in H^-_\Delta(G,v)$ generates a graded $\mathcal{R}$-module $M \subset H^-_\Delta(G,v)$. Because of the special nature of the ring $\mathcal{R}$ the module $M$ can be decomposed as sums of cyclic modules of the form $\mathcal{R}/q^\alpha$  \cite[Lemma 5.3]{OSS4}, from where the contradiction since $\mathcal{R}/q^\alpha \otimes_\mathcal{R} \F=\F$. 

To see this note that  $\mathcal{R}/q^\alpha$ is the vector space of long power series $f=\sum_{\gamma \in \Omega} \W^\gamma$ with  $\gamma\leq \alpha$. These can be divided into two equivalence classes: those such that $f(0)=0$ and those such that $f(0)=1$. If $f=\sum_{\gamma \in \Omega} \W^\gamma$ is of the first kind then 
\[f\otimes 1 = 1\otimes f(0) \cdot 1= 1\otimes 0=0 \ , \] 
otherwise $f\otimes 1 = 1\otimes f(0) \cdot 1= 1\otimes 1$.   
\end{proof}

\begin{rem}
Note the perfect analogy of the argument presented in the proof of Lemma \ref{shortexactlemma} and the one for the exact triangle in the analytic theory. A similar argument appeared in \cite[Theorem 6.4]{OSS1}.
\end{rem}

\section{First consequences of the exact triangle}
\subsection{Floer simple knots} We briefly explore some immediate consequences of the exact triangle. First recall that given a knot $(Y,K)$ there is a spectral sequence starting with $\widehat{HFK}(Y,K)$ and converging to $\widehat{HF}(Y)$, the Heegaard Floer homology of the ambient three-manifold. This leads to a rank inequality:
\[\text{rk} \widehat{HFK}(Y,K) \geq \text{rk}\widehat{HF}(Y) \ . \]
If the equality holds then we say that $(Y,K)$ is \textit{Floer simple}. If $Y$ is assumed to be an $L$-space this is the same as saying that $tHFK^-(Y,K) = \mathcal{R}^{|H_1(Y; \Z)|}$.

\begin{exa} The plumbing 
\vspace{-0.3cm}
\[\xygraph{ 
!{<0cm,0cm>;<1cm,0cm>:<0cm,1cm>::}
!~-{@{-}@[|(2.5)]}
!{(0,0) }*+{\bullet}="x"
!{(-1.5,0) }*+{\bullet}="a1"
!{(0.5,0) }*+{K}
!{(-1.5,0.5) }*+{-p}
"x"-"a1"
} \ .\]
represents a Floer simple knot in the lens space $L(p,1)$.  
\end{exa}

Floer simple knots played an important role in the work of Baker, Grigsby and Hedden \cite{}. We use the exact triangle to produce some new examples of knots belonging to this class.   

\begin{thm} \label{simpleknots}Suppose that a knot $(Y,K)$ can be represented by means of a negative-definite plumbing tree $\Gamma$ with one unframed vertex $v_0$ and no bad points. Then
\[tHFK^-(Y(G),K)\simeq t\mathbb{HFK}_*(\Gamma)=\mathcal{R}^{|H_1(Y(G);\Z)|} \ .\]
In particular, the knot $(Y,K)$ is Floer simple.
\end{thm}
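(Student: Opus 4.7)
\emph{Plan.} The statement splits into two assertions: that $t\mathbb{HFK}_*(\Gamma)=\mathcal{R}^{|H_1(Y(G);\Z)|}$, and that $tHFK^-(Y(G),K)$ is isomorphic to this combinatorial module. I treat them in that order.

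For the combinatorial computation, Lemma \ref{shape} already gives that $t\mathbb{HFK}_*(\Gamma,\s)$ is a rank-one $\mathcal{R}$-module for each $\s \in \Spinc(Y(G))$. Since $|\Spinc(Y(G))|=|H_1(Y(G);\Z)|$, what remains is to rule out $\mathcal{R}$-torsion in each summand. This is where the no-bad-vertex hypothesis enters. Adapting the Morse-theoretic analysis of \cite{Lspaceknots,Nemethi1}, every sublevel set $M_l$ of the weight function $w_t$ is either empty or contractible: as $l$ increases, newly added cells always attach to an already existing connected component without creating higher-dimensional homology classes. Consequently $\CF_t(\Gamma,\s)$ is quasi-isomorphic to a telescope of copies of $\F$, whose persistent homology is precisely $\mathcal{R}$, with no torsion summands of the form $\mathcal{R}/\W^\alpha$.

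For the identification with the analytic theory I proceed by induction on the number of vertices of $G$, running the two exact triangles of Theorems \ref{exactanalyticthm} and \ref{latticeexact} in tandem. The base case $G=\emptyset$ gives $(Y,K)=(S^3,\text{unknot})$, for which both sides equal $\mathcal{R}$. For the inductive step I choose a vertex $v$ of $G$ not directly connected to $v_0$ (at worst, after a preliminary reduction that shifts the problem to a diagram where such a $v$ exists). The no-bad-vertex property is preserved under both $G \mapsto G-v$ and $G \mapsto G_{+1}(v)$, so the inductive hypothesis applies to the two simpler graphs. A five-lemma argument, combining the two exact triangles with the inductively established isomorphisms, then delivers the isomorphism for $\Gamma$ itself.

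The main technical obstacle is verifying that the connecting maps in the analytic and combinatorial exact triangles intertwine under the inductively constructed comparison isomorphism. Following the scheme of \cite{OSGraphManifolds}, this is achieved by a chain-level identification of the analytic surgery map $F_{\bob,\bog}$ (defined by counting pseudo-holomorphic triangles) with the combinatorial map built out of $\psi_v$ and $P_t$. The absence of bad vertices ensures that the relevant triangle counts reduce to lattice weight data on $\R^s$, rendering the matching of connecting maps amenable to direct verification and thereby completing the induction.
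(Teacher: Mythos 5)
Your proposal takes a genuinely different route from the paper's in both halves, and each half has a gap worth flagging.

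For the combinatorial computation $t\mathbb{HFK}_*(\Gamma)=\mathcal{R}^{|H_1(Y(G);\Z)|}$, the paper does not argue directly about contractibility of sublevel sets. It invokes the combinatorial exact triangle of Theorem~\ref{latticeexact} and peels off a leaf, inducting as in \cite[Lemma 2.11]{OS20} (first assuming the strict inequality $\deg(w)<-w^2$ at every vertex, then relaxing by a second induction on the number of vertices where equality holds). Your claim that the sublevel sets of $w_t$ are empty or contractible is what one is trying to prove, not something known in advance: the weight $w_t$ is a convex combination $(2-t)w_k + t\,w_{k+2v_0^*}$, and it is not immediate that the Laufer/Morse-type arguments of \cite{Nemethi1,Lspaceknots} survive the real-valued twist $tv_0^*$ of the Riemann--Roch form. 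This is precisely what the paper circumvents by routing through the exact triangle, so you would need to supply the missing acyclicity argument rather than assert that it \enquote{adapts.}

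For the identification $tHFK^-(Y(G),K)\simeq t\mathbb{HFK}_*(\Gamma)$, the paper does \emph{not} run a parallel exact-triangle induction. It cites directly the result of Ozsv\'ath--Stipsicz--Szab\'o \cite{Lspaceknots} that when $G$ has no bad vertex the filtered complexes $\mathbb{CFK}_*^\infty(\Gamma)$ and $CFK^\infty(Y(G),K)$ are chain homotopy equivalent, and then observes that both $tHFK^-$ and $t\mathbb{HFK}_*$ are obtained from these by the same $t$-modification functor of \cite[Section 4]{OSS4}. Your five-lemma scheme is, in substance, the paper's strategy for the harder cases of one and two bad vertices later on (following \cite[Theorems 2.1--2.2]{OS20}), but (i) the five lemma as stated does not apply verbatim to the long exact sequences coming from a triangle---one needs the surjectivity/limiting arguments of Ozsv\'ath--Szab\'o, which is why the paper references \cite[Lemma 2.10]{OS20} instead; (ii) the chain-level matching of $F_{\bob,\bog}$ with $\psi_v$ and $P_t$ is a serious technical step, not a formality, and you have not provided it; and (iii) your base case $G=\emptyset$ leaves no vertex not adjacent to $v_0$ to start the induction, so the inductive step cannot be fed. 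The paper's appeal to \cite{Lspaceknots} avoids all of this in one stroke for the no-bad-vertex case, which is the whole point of isolating this statement as a base case for the later, genuinely exact-triangle-based arguments.
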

\begin{proof} First suppose that the strict inequality $\deg(w)<-w^2$ holds at every vertex $w$ of $G=\Gamma\setminus \{v_0\}$. Consider the long exact sequence of Theorem \ref{latticeexact}
\begin{equation*} \xymatrix{
 \ar[r] & t\mathbb{HFK}_p(\Gamma-v) \ar[r] &  t\mathbb{HFK}_p(\Gamma) \ar[r]  & t\mathbb{HFK}_p(\Gamma_{+1}(v))  \ar[r] & t\mathbb{HFK}_{p-1}(\Gamma) \ar[r] &} \ ,
\end{equation*} 
and choose $v$ to be a leaf. Then going by induction as in \cite[Lemma 2.11]{OS20} we can conclude that $t\mathbb{HFK}_p(\Gamma)=0$ for $p>0$, and $t\mathbb{HFK}_0(\Gamma)=\mathcal{R}^{|\det(G)|}$. 

If  $\deg(w)=-w^2$ at some vertices $\{v_{i_1}, \dots, v_{i_k}\}$ of $G$, we can proceed by induction on $k$ choosing $v$ to be one of these vertices. This proves that 
\[t\mathbb{HFK}_*(\Gamma)=\mathcal{R}^{|\det(G)|} = \mathcal{R}^{|H_1(Y(G);\Z)|}\] 
when there are no bad points. On the other hand, in  \cite{Lspaceknots} Ozsv\' ath, Stipsicz, and Szab\' o showed that when there are no bad points  there is a chain homotopy equivalence $\mathbb{CFK}_*^\infty(\Gamma)\simeq CFK^\infty(Y(G),K)$. Since $tHFK^-(Y(G),K)$ and $t\mathbb{HFK}_*(\Gamma)$ are obtained by $t$-modification \cite[Section 4]{OSS4} from $\mathbb{CFK}_*^\infty(\Gamma)$ and $CFK^\infty(Y(G),K)$ respectively, we can conclude that there is an isomorphism  $tHFK^-(Y(G),K)\simeq t\mathbb{HFK}_*(\Gamma)$.
\end{proof}

\begin{exa}The trefoil knot $(S^3, K)$is not Floer simple. Indeed, for the trefoil knot $\text{rk} \widehat{HFK}(S^3,K)=3$, while $\text{rk}\widehat{HF}(S^3)=1$. Note that the trefoil knot can be represented by means of a plumbing tree with one bad vertex. 
\end{exa}
  
\subsection{Reduced lattice groups} Given a $CW$-complex $X$ one can consider the augmentation homomorphism $\epsilon: C_0(X;\F) \to \F$. This gives rise to the reduced homology  $\widetilde{H}_*(X; \F)$ of $X$ \cite{AH}. Similarly given a plumbing diagram $\Gamma$ of an algebraic knot $(Y,K)$ we can consider the augmentation homomorphism
\[\mathbb{CF}_0(G, \s) =\prod_{l\in \R}C_0(M_l; \F) \longrightarrow \prod_{l\geq \gamma(t)} \F=\mathcal{R}_{(\gamma(t))} \ ,\]
where, in the notation of Section \ref{construction}, $\gamma(t)=\min_{x\in \Z^s} \chi_t(x)$. This gives rise to the reduced lattice group $t\mathbb{HFK}_\text{red,*}(\Gamma,\s)=  \prod_{l\in \R}\widetilde{H}_*(M_l; \F)$  
with the property that: 
\[t\mathbb{HFK}_*(\Gamma,\s)=\mathcal{R}_{(\Upsilon_{\Gamma, \s}(t))} \oplus t\mathbb{HFK}_\text{red,*}(\Gamma,\s) \ .\]
In particular, we have that $t\mathbb{HFK}_\text{red,p}(\Gamma,\s)= t\mathbb{HFK}_\text{p}(\Gamma,\s)$ for $p\geq 1$.

\begin{cor}\label{badpoints}If $\Gamma$ represents a type-n knot then $t\mathbb{HFK}_{\text{red},p}(\Gamma)=0$ for $p\geq n$. 
\end{cor}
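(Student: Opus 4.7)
My plan is to argue by induction on the total badness
\[ b(\Gamma) := \sum_{v \in G,\; \deg(v) > -v^2} \bigl(\deg(v) + v^2\bigr) \]
of the subgraph $G = \Gamma - v_0$, a non-negative integer that vanishes precisely when $G$ has no bad vertex and that always satisfies $b(\Gamma) \geq n(\Gamma)$, where $n(\Gamma)$ denotes the number of bad vertices. The base case $b(\Gamma) = 0$ coincides with $n = 0$ and is exactly Theorem \ref{simpleknots}: it identifies $t\mathbb{HFK}_*(\Gamma, \s)$ with $\mathcal{R}$ concentrated in lattice grading zero for every $\Spinc$ structure $\s$, so the reduced groups vanish in every non-negative degree.

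For the inductive step I would pick a bad vertex $v$ of $G$ and set $\Gamma' := \Gamma_{-1}(v)$, the graph obtained from $\Gamma$ by \emph{decreasing} the weight of $v$ by one. Then $\Gamma = \Gamma'_{+1}(v)$, the graph $\Gamma'$ is still negative-definite, and the local contribution of $v$ to badness drops by one, yielding $b(\Gamma') = b(\Gamma) - 1$ and $n(\Gamma') \in \{n-1, n\}$; similarly $\Gamma - v$ satisfies $n(\Gamma - v) \leq n - 1$ and $b(\Gamma - v) \leq b - 1$. Applying Theorem \ref{latticeexact} with $\Gamma'$ in the role of ``$\Gamma$'' produces
\[ \cdots \to t\mathbb{HFK}_p(\Gamma - v) \to t\mathbb{HFK}_p(\Gamma') \to t\mathbb{HFK}_p(\Gamma) \to t\mathbb{HFK}_{p-1}(\Gamma - v) \to \cdots, \]
and the inductive hypothesis applied to $\Gamma'$ and $\Gamma - v$ (both of strictly smaller total badness) supplies $t\mathbb{HFK}_{\text{red},p}(\Gamma') = 0$ for $p \geq n$ and $t\mathbb{HFK}_{\text{red},p}(\Gamma - v) = 0$ for $p \geq n - 1$.

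For $p \geq \max\{2, n\}$ both flanking terms of the exact sequence vanish outright --- reduced and unreduced groups agree in positive degree --- and the sequence forces $t\mathbb{HFK}_p(\Gamma) = 0$. The only borderline instance is $p = n = 1$, where the sequence collapses to $0 \to t\mathbb{HFK}_1(\Gamma) \to t\mathbb{HFK}_0(\Gamma - v)$. By the inductive hypothesis the target reduces to its torsion-free summand $\mathcal{R}_{(\Upsilon(\Gamma - v))}$, while Lemma \ref{shape} asserts that $t\mathbb{HFK}_1(\Gamma)$ is entirely $\mathcal{R}$-torsion. Since no non-zero torsion module injects into a torsion-free one, $t\mathbb{HFK}_1(\Gamma) = 0$, and the induction closes.

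The hardest part will be ensuring, at each inductive step, that the chosen bad vertex $v$ can be taken to be non-adjacent to $v_0$, as required by the hypothesis of Theorem \ref{latticeexact}. In typical graph-knot diagrams this is immediate, but in degenerate configurations where every bad vertex touches $v_0$ one must introduce a preliminary plumbing-calculus manoeuvre --- for instance an edge blow-up separating $v$ from $v_0$ --- while carefully verifying that the badness count does not inflate beyond what the induction can absorb. A minor subsidiary check, that decreasing the weight at $v$ cannot create new bad vertices elsewhere, is immediate from the fact that the defining inequality $\deg(w) > -w^2$ at any vertex $w \neq v$ is insensitive to changes in $v^2$.
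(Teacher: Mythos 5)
Your proposal is correct in its broad strokes and matches the route the paper itself takes (the paper's proof is a one-liner: induct on bad points as in Ozsv\'ath--Stipsicz--Szab\'o, with Theorem \ref{simpleknots} as base case). Two features of your write-up deserve comment. First, your choice to induct on the \emph{total badness} $b(\Gamma)=\sum(\deg(v)+v^2)$ rather than on the raw number of bad vertices is not just cosmetic: passing from $\Gamma$ to $\Gamma'=\Gamma_{-1}(v)$ can leave $v$ still bad (whenever $\deg(v)+v^2\geq 2$), so the bad-vertex count need not strictly drop and a naive induction on $n$ would stall. Second, your resolution of the borderline case $p=n=1$---using that $t\mathbb{HFK}_1(\Gamma)$ is all $\mathcal{R}$-torsion (Lemma \ref{shape}) while $t\mathbb{HFK}_0(\Gamma-v)=\mathcal{R}_{(\Upsilon)}$ is torsion-free once $t\mathbb{HFK}_{\mathrm{red},0}(\Gamma-v)=0$---is exactly the right observation and plugs a hole that the paper's terse proof leaves to the reader.

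The obstruction you flag at the end is, however, a genuine gap that your proposal does not close. Theorem \ref{latticeexact} as stated requires the surgery vertex $v$ to be \emph{not} adjacent to $v_0$ in $\Gamma$. A bad vertex of a negative-definite $G$ has $\deg_G(v)>-v^2\geq 1$, so $\deg_G(v)\geq 2$ and $v$ is never a leaf of $G$; but this does not preclude $v$ from being adjacent to $v_0$. A concrete obstruction: take $v$ with $v^2=-2$ joined to $v_0$ and to three leaves $u_1,u_2,u_3$ of $G$ with arbitrarily negative framings. Then $v$ is the unique bad vertex yet is adjacent to $v_0$, and no other choice of surgery vertex reduces badness, so the induction as written cannot launch. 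Your suggested remedy (an edge blow-up separating $v$ from $v_0$) would require an invariance statement for $t\mathbb{HFK}$ under that plumbing-calculus move, which the paper does not prove. To complete the argument you would need to either (i) establish that invariance, or (ii) show that the hypothesis of non-adjacency in Theorem \ref{latticeexact} can be dropped, or (iii) constrain the class of graph knots under consideration (e.g. insist $\deg_\Gamma(v_0)=1$ and handle the single adjacent vertex separately by a direct computation). Until one of these is supplied the inductive step is incomplete for precisely the configurations you worry about.
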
 
\begin{proof}By induction on the number of bad points as in \cite[Theorem 5.1]{OSS1}. The base step here is provided by Theorem \ref{simpleknots}.
\end{proof}  

\section{A map $t\mathbb{HFK}^-_0(\Gamma)\to t \text{HFK}^-(Y(G), K)$}
We start with an handy description of the group $t\mathbb{HFK}^-_0(\Gamma)$.   

\begin{lem}\label{relations}
The group $t\mathbb{HFK}^-_0(\Gamma)$ can be identified with the quotient
\[t\mathbb{HFK}^-_0(\Gamma)= \left. \left( \bigoplus_{k \in \Char(G)} \mathcal{R} \otimes k \right) \right/ \sim \] 
where $\sim$ denotes the equivalence relation generated by the following elementary relations. Let $v$ be a vertex of $G=\Gamma-v_0$, $k\in \Char(G)$ be a characteristic vector, and let $2n=k(v)+v\cdot v$. If $v$ is not connected to $v_0$ in $\Gamma$ then  
\begin{itemize}
\item if $n\geq 0$ then $q^{2n+m}\otimes (k+2v^*)\sim q^{m}\otimes k$,
\item while if $n\leq 0$ then $q^{m}\otimes (k+2v^*)\sim q^{m-2n}\otimes k$.
\end{itemize} 
If $v$ is connected to $v_0$ in $\Gamma$ then
\begin{itemize}
\item if $n\geq 0$ then $q^{m+2n+t}\otimes (k+2v^*)\sim q^{m}\otimes k$,
\item while if $n\leq -1$ then $q^{m}\otimes (k+2v^*)\sim q^{m-2n-t}\otimes k$.
\end{itemize} 
\end{lem}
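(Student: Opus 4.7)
The plan is to compute $t\mathbb{HFK}_0^-(\Gamma)$ directly from the definition. Since $\mathbb{CF}_t(\Gamma)$ is supported in non-negative cellular degrees, one has $t\mathbb{HFK}_0^-(\Gamma) = C_0 / \mathrm{im}(\partial_t \colon C_1 \to C_0)$. By construction, $C_0$ is freely generated as an $\mathcal{R}$-module by the pairs $[K,\emptyset]$ with $K \in \Char(G)$, which one identifies with the direct summands $\bigoplus_K \mathcal{R} \otimes K$ appearing in the statement. Similarly, $C_1$ is $\mathcal{R}$-freely generated by the pairs $[K, \{v\}]$ with $K \in \Char(G)$ and $v \in G$. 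Thus the lemma reduces to checking that the boundaries $\partial_t[K,\{v\}]$ produce exactly the four families of elementary relations listed.

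The core step is the direct expansion of $\partial_t[K,\{v\}]$. From $g[K,\emptyset] = f(K,\emptyset) = 0$ and $g[K,\{v\}] = \min\{0, n\}$ with $n = (K(v) + v^2)/2$, one reads off
\[
(a_v[K,\{v\}],\, b_v[K,\{v\}]) = \begin{cases} (0,\, n) & \text{if } n \geq 0, \\ (-n,\, 0) & \text{if } n \leq 0,\end{cases}
\]
and the analogous pair for $K + 2v_0^*$ is obtained by replacing $n$ by $n' = n + v_0^* \cdot v$; here $v_0^* \cdot v$ equals $1$ when $v$ is adjacent to $v_0$ in $\Gamma$, and $0$ otherwise.

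If $v$ is \emph{not} adjacent to $v_0$, then $n' = n$ and the $t$-twist collapses: $a_v(t) = 2\, a_v[K,\{v\}]$ and $b_v(t) = 2\, b_v[K,\{v\}]$. Substituting into $\partial_t[K,\{v\}] = q^{a_v(t)} K + q^{b_v(t)} (K+2v^*)$ and passing to the quotient yields the first two bulleted relations, separated by the sign of $n$. If $v$ \emph{is} adjacent to $v_0$, then $n' = n+1$. Integrality of $n$ (forced by $K$ being characteristic) rules out the mixed scenario $n < 0 < n'$, leaving only $n \geq 0$ and $n \leq -1$; a direct evaluation of $(2-t)\, a + t\, b$ then produces the twisted exponents $2n + t$ and $-2n - t$, which match the last two bulleted relations.

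I expect the main obstacle to be bookkeeping rather than anything conceptual, namely carefully tracking the four regimes determined by adjacency with $v_0$ and the sign of $n$, and invoking integrality to exclude the mixed-sign case when $v$ is adjacent to $v_0$. Once these case distinctions are handled, the claimed presentation of $t\mathbb{HFK}_0^-(\Gamma)$ as a quotient of $\bigoplus_K \mathcal{R} \otimes K$ follows immediately.
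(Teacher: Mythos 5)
Your proposal is correct and is exactly what the paper intends by \enquote{a direct computation of the differential $\partial_t \colon \mathbb{CF}_1(\Gamma) \to \mathbb{CF}_0(\Gamma)$}: identifying $t\mathbb{HFK}_0^- = C_0/\mathrm{im}\,\partial_t$ since the complex is non-negatively graded, computing $(a_v[K,\{v\}], b_v[K,\{v\}])$ from $g[K,\{v\}]=\min\{0,n\}$, and splitting into the four regimes governed by $\mathrm{sgn}(n)$ and the value of $v_0^* \cdot v \in \{0,1\}$. One small slip in notation: where you write \enquote{a direct evaluation of $(2-t)a + tb$}, you of course mean combining each exponent with its counterpart for $K+2v_0^*$, namely $a_v(t) = (2-t)\,a_v[K,\{v\}] + t\,a_v[K+2v_0^*,\{v\}]$ and likewise for $b_v(t)$; the resulting exponents $2n+t$ and $-2n-t$ are correct, so the intent is clear.
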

\begin{proof}
 This is just a direct computation of the differential $\partial_t: \CF_1(G)\to \CF_0(G)$.
\end{proof}

Based on this explicit description of the lattice group $t\mathbb{HFK}^-_0(\Gamma)$ we  build a module homomorphism $t\mathbb{HFK}^-_0(\Gamma)\to t \text{HFK}^-(Y(G), K)$. 

First we observe that the algebraic knot associated to a plumbing diagram $\Gamma$ comes with an associated doubly pointed Heegaard diagram. This is obtained as follows. Fix a planar drawing $\Gamma \hookrightarrow \R^2$ of the graph $\Gamma$ and build the associated surgery diagram $\mathcal{D}_\Gamma$ suggested by Figure \ref{trefoil}. Forgetting about the framings and the under-over conventions this gives a connected, $4$-valent planar graph. Thickening $D_\Gamma \subset \R^2\times 0 \subset \R^3$ we get a genus $g=\#(\text{vertices})+ \# (\text{edges})$ solid handlebody $V \subset S^3$ providing, together with its complement $V^c=\overline{S^3-V}$, a Heegaard splitting of $S^3$. We choose as $\alpha$-curves of the corresponding Heegaard diagram (supported on $\Sigma= \partial V$) the boundary of the compact complementary regions of $\mathcal{D}_\Gamma \subset \R^2$ (this provides a set of compressing circles for $U_\alpha=V^c$). As compressing circles of $U_\beta=V$ ($\beta$-curves) we take a meridian for each link component of $\mathcal{D}_\Gamma$, and one further curve in correspondence of each edge of $\Gamma$ as suggested by Figure \ref{beta}. By taking base points on the two sides of the $\beta$-curve corresponding to the meridian of the unlabled vertex $v_0\in \Gamma$ we get a doubly pointed Heegaard diagram $\mathcal{H}_{\alpha\beta}=(\Sigma, \alpha, \beta, z, w)$ representing the unknot $U \subset S^3$. 

\begin{figure}[t]
\begin{center}
\includegraphics[scale=0.7]{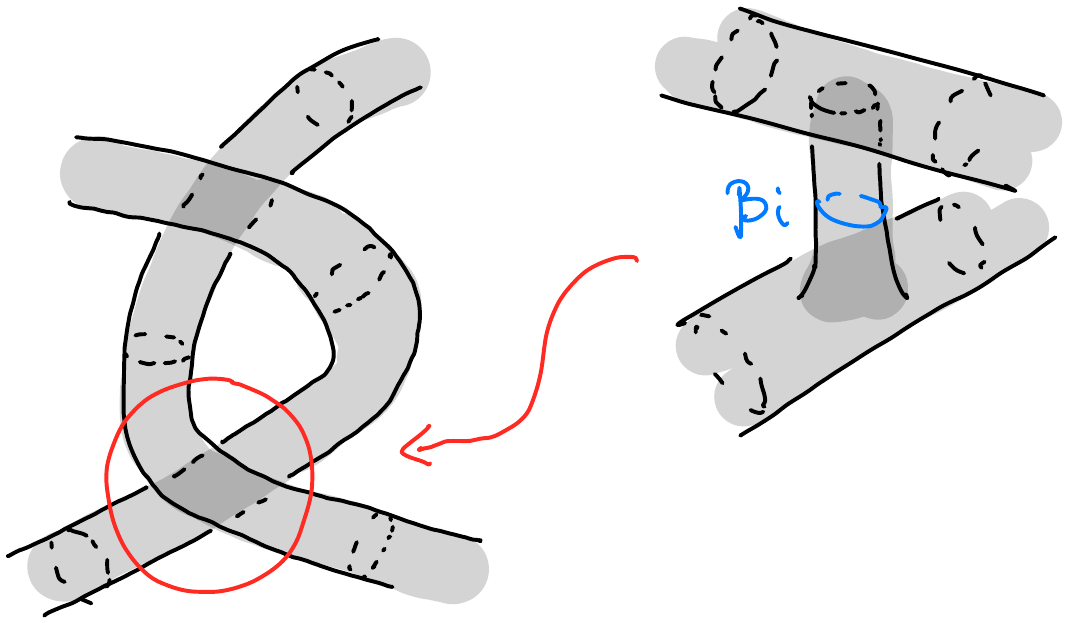}
\caption{\label{beta}}
\end{center}
\end{figure}

Notice that in the choice of the $\beta$-curves above we can substitute the meridians of the framed components with the framings. By replacing the other curves with small Hamiltonian translates of the remaining $\beta$-curves we get another $g$-tuple  of curves $\gamma$. This gives rise to other two doubly-pointed Heegaard diagrams: $\mathcal{H}_{\alpha\gamma}=(\Sigma, \alpha, \gamma, z, w)$ representing $K \subset Y(G)$, and $\mathcal{H}_{\beta\gamma}=(\Sigma, \beta, \gamma, z, w)$ representing the unknot in $\#^{g-\ell} S^1 \times S^2$.

Let $\Theta_{\beta\gamma} \in \Tt_\beta \cap \Tt_\gamma$ denote the intersection point representing the top-dimensional generator of $H_*(CF_t(\Tt_\beta, \Tt_\gamma))= \Lambda^*H_1(T^{g-\ell}) \otimes \mathcal{R}$. Given intersection points $\x \in \Tt_\alpha \cap \Tt_\beta$ and $\y \in \Tt_\alpha \cap \Tt_\gamma$ Ozsv\' ath and Szab\' o \cite[Proposition 8.4]{OS2}  built a map $\s_z : \pi_2(\x, \Theta_{\beta \gamma} , \y) \to \Spinc(X(G))$. Given a $\Spinc$ structure $\mathfrak{t}$ of the plumbing of spheres $X(G)$ we define a map 
$f_{\Gamma, \mathfrak{t}}: CF_t(\Tt_\alpha, \Tt_\beta)  \to CF_t(\Tt_\alpha, \Tt_\gamma)$  
setting
\[f_{\Gamma, \mathfrak{t}}(\x)=\sum_{\y \in \Tt_\alpha \cap \Tt_\gamma} \sum_{ \substack{ \Delta \in \pi_2(\x ,\Theta_{\beta\gamma}, \y) \\   \s_z(\Delta)=\mathfrak{t}, \ \mu( \Delta ) =0  }}\#\mathcal{M}(\Delta) \ \W^{tn_z(\Delta)+ (2-t) n_w(\Delta)} \cdot \y  \ .\]
Of course, this is just a $\Spinc$ refinement of the holomorphic triangle count of Section \ref{sectionanalytictriangle}. Inspecting the ends of moduli spaces of holomorphic triangles with Maslov index $\mu=1$ one  concludes that $f_{\Gamma, \mathfrak{t}}$ is a chain map. Thus given a characteristic vector $k \in \Char_\s(G)\subset \Spinc(X(G))$ we get a map 
$F_{\Gamma, k}: \mathcal{R}=H_*(CF_t(\Tt_\alpha, \Tt_\beta)) \to \Kt(K, Y(G), \s)$, $F_{\Gamma, k}= (f_{\Gamma, k})_*$.

\begin{exa}[Lens space cobordism]\label{lensspaces}
Let $D_{-p}$  denote the total space of the disk bundle $D_{-p}\to S^2$ with Euler number $e=-p$, and let $\Delta \subset D_p$ be a fibre disk. Then $\partial(D_{-p}, \Delta)= (L(p,1), K)$ is a Floer simple knot with plumbing 

\vspace{-0.3cm}
\[\xygraph{ 
!{<0cm,0cm>;<1cm,0cm>:<0cm,1cm>::}
!~-{@{-}@[|(2.5)]}
!{(0,0) }*+{\bullet}="x"
!{(-1.5,0) }*+{\bullet}="a1"
!{(0.5,0) }*+{K}
!{(-1.5,0.5) }*+{-p}
"x"-"a1"
} \ .\]

A doubly pointed Heegaard triple representing  $(D_{-p}, \Delta)$ is given by $(T^2, \alpha, \beta, \gamma, z, w)$, where $\alpha$ and $\gamma$ represent respectively a longitude and a meridian of the two torus $T^2$, and $\beta$ a type $(1,p)$ curve. The two base points $z$ and $w$ are chosen to lie on the two opposite sides of the $\beta$-curve.

Let $v$ represents the generator of $H_2(D_{-p}, \Z)$. If $k$ is a characteristic vector such that $2n=k(v)+v\cdot v$ then
\begin{itemize}
\item  $q^{2n+t}F_{\Gamma, \ k+2v^*}=F_{\Gamma,k}$ if $n\geq 0$, and 
\item  $F_{\Gamma, \  k+2v^*}= q^{-2n-t}F_{\Gamma,k}$ if $n\leq -1$.
\end{itemize}  
This is a direct computation along the line of \cite[Section 4.1]{correctiontermslens}.
\end{exa}

We define $\varphi_{\Gamma}: \Char_\s(G) \to \Kt(Y(G),K, \s)$ setting $\varphi_{\Gamma}(k)=F_{\Gamma, k}(1)$. 

\begin{lem} $\varphi_{\Gamma}$  descends to an $\mathcal{R}$-module homomorphism 
\[\Phi_{\Gamma}:t\mathbb{HFK}^-_0(\Gamma, \s)\to t\text{HFK}^-(Y(G),K, \s) \ .\]
\end{lem}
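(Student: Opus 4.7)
The plan is to verify that $\varphi_\Gamma$ descends through the equivalence relation of Lemma \ref{relations}; once this is established, $\mathcal{R}$-linearity automatically extends $\varphi_\Gamma$ to a module homomorphism $\Phi_\Gamma$ on the quotient. Thus one must check, for each vertex $v$ of $G$ and each characteristic vector $k \in \Char_\s(G)$ (with $2n = k(v) + v\cdot v$), the four elementary identities: $\varphi_\Gamma(k) = q^{2n}\varphi_\Gamma(k+2v^*)$ when $v$ is not adjacent to $v_0$ and $n\geq 0$, its mirror version when $n\leq 0$, and the analogous identities $\varphi_\Gamma(k) = q^{2n+t}\varphi_\Gamma(k+2v^*)$ (and its mirror) when $v$ is adjacent to $v_0$.

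First I would compare the two triangle counts defining $F_{\Gamma, k}(1)$ and $F_{\Gamma, k+2v^*}(1)$. Given any homotopy class of triangles $\Delta \in \pi_2(\x, \Theta_{\beta\gamma}, \y)$ contributing to $F_{\Gamma, k}$ at some $\y \in \Tt_\alpha \cap \Tt_\gamma$, one can add the triply-periodic domain $P_v$ corresponding to the vertex $v$ to obtain a new class $\Delta' = \Delta + P_v$ with $\s_z(\Delta') = \s_z(\Delta) + \mathrm{PD}[v]$; this assignment establishes a bijection between the contributions to $F_{\Gamma, k}$ and those to $F_{\Gamma, k+2v^*}$ at the same $\y$, and it changes the monomial coefficient by the multiplicative factor $q^{tn_z(P_v) + (2-t)n_w(P_v)}$.

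Next I would pin down the multiplicities $n_z(P_v)$ and $n_w(P_v)$ using the explicit doubly-pointed Heegaard triple $(\Sigma, \boa, \bob, \bog, z, w)$ built in this section. The periodic domain $P_v$ is supported on the annulus between the $\beta$-curve associated to $v$ and its small Hamiltonian translate, together with contributions from neighbouring vertices dictated by the connectivity of $\Gamma$. A direct inspection of the diagram yields $n_z(P_v) + n_w(P_v) = 2n$, while $n_z(P_v) - n_w(P_v)$ equals $\langle v_0^*, v\rangle$, which is $1$ if $v$ is adjacent to $v_0$ in $\Gamma$ and $0$ otherwise (the two basepoints $z,w$ lie on opposite sides of the meridian of $v_0$). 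Combining, the exponent $tn_z(P_v) + (2-t)n_w(P_v)$ equals $2n$ or $2n+t$, matching the exponents of Lemma \ref{relations} in the two cases with $n \geq 0$; reversing the roles of $k$ and $k+2v^*$ (using $-P_v$) handles the $n \leq 0$ cases.

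The main obstacle is the explicit identification of the periodic domain $P_v$ and the computation of its multiplicities at the two basepoints: this amounts to a local analysis on the Heegaard surface near the portion corresponding to the vertex $v$ and its incident edges, sensitive to whether one of those edges leads to $v_0$. The model calculation was carried out in Example \ref{lensspaces}, and the present general case reduces to that by locality. Once the four identities are verified, the presentation in Lemma \ref{relations} upgrades $\varphi_\Gamma$ to the desired well-defined $\mathcal{R}$-linear map $\Phi_\Gamma \from t\mathbb{HFK}^-_0(\Gamma,\s) \to \Kt(Y(G),K,\s)$.
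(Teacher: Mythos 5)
Your argument has a genuine gap at the ``bijection'' step, and the exponent bookkeeping already signals trouble. In the adjacent case you claim $n_z(P_v)+n_w(P_v)=2n$ and $n_z(P_v)-n_w(P_v)=1$: these have opposite parities (since $2n=k(v)+v^2$ is even for $k$ characteristic), so no integers $n_z(P_v),n_w(P_v)$ can satisfy both. Moreover $tn_z+(2-t)n_w=(n_z+n_w)+(t-1)(n_z-n_w)$, so your stated values would yield $2n+t-1$ rather than the $2n+t$ of Lemma~\ref{relations}. The source of the discrepancy is that adding the triply-periodic domain $P_v$ to a triangle class $\Delta$ changes the Maslov index by $\langle c_1(\s_z(\Delta)),[P_v]\rangle + [P_v]\cdot[P_v]=k(v)+v^2=2n$, so $\Delta+P_v$ is not an index-zero class; restoring $\mu=0$ requires also adding multiples of $[\Sigma]$, which shifts both basepoint multiplicities and invalidates the naive count. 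More fundamentally, even after the Maslov correction the assignment is a bijection of \emph{homotopy classes}, not of pseudo-holomorphic representatives: $\mathcal{M}(\Delta)$ and $\mathcal{M}(\Delta+P_v-n[\Sigma])$ are different moduli spaces with potentially different cardinalities, so no chain-level proportionality follows. The relations of Lemma~\ref{relations} hold in homology, between the classes $\varphi_\Gamma(k)=F_{\Gamma,k}(1)$, and they require a global argument.

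The paper carries out the reduction to the lens-space model that you gesture at in your last paragraph, but through functoriality rather than ``locality'': it factors the map $F_{\Gamma,k}$ through the disk-bundle piece $D_{-p}\subset X(G)$ containing the sphere of $v$, using the composition law for holomorphic triangle counting maps, and then invokes the explicit genus-one lens space computation of Example~\ref{lensspaces} in the adjacent case (and the standard unbased cobordism map for $D_{-p}\colon S^3\to L(p,1)$ otherwise). This confines all the delicate moduli-space analysis to those small models, where it can actually be carried out.
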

\begin{proof}A vertex $v$ of $G$ corresponds to a $(-p)$-framed sphere embedded in $X(G)$. This has a tubular neighbourhood $D_{-p}$ diffeomorphic to a disk bundle with base the two-sphere $S^2$, and Euler number $e=-p$. Let $\Delta \subset D_p$ be a fibre disk. 

There are two cases: the case when $v$ is linked to the unframed vertex $v_0$ and the case when it is not. In the case when $v$ is not linked to $v_0$ in $\Gamma$ one concludes that the map $F_{\Gamma, k}:\mathcal{R} \to tHFK^-(K, Y(G), \s)$ factors through the cobordism map associated to $D_{-p}: S^3 \to L(p,1)$. This implies the first set of relations of Lemma \ref{relations}. If $v$ is linked to $v_0$ on the other hand, the map $F_{\Gamma, k}:\mathcal{R} \to tHFK^-(K, Y(G), \s)$ factors through the cobordism map associated to the pair $(D_{-p}, \Delta)$ we computed in Example~\ref{lensspaces}. In this case we get the second set of relations.    
\end{proof}

\begin{prop}If $\Gamma$ has at most one bad point then there is an isomorphism of $\mathcal{R}$-modules $tHFK^-(Y(G), K) \simeq t\mathbb{HFK}^-_*(\Gamma)$.
\end{prop}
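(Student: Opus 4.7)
The plan is to induct on the number of bad vertices of $G=\Gamma-v_0$, the base case (zero bad vertices) being Theorem \ref{simpleknots}. First I would upgrade this base case to the sharper statement that the map $\Phi_\Gamma$ constructed in the previous lemma is itself an isomorphism when $G$ has no bad vertices, by inspecting the holomorphic triangle argument of \cite{Lspaceknots} suitably twisted by the parameter $t$.

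For the inductive step, suppose $\Gamma$ carries a single bad vertex $w$ that is not adjacent to $v_0$. Let $\Gamma'$ be the plumbing obtained from $\Gamma$ by decreasing the framing of $w$ by enough that $w$ is no longer bad; both $\Gamma'$ and $\Gamma-w$ then have zero bad vertices, so the base case supplies isomorphisms on the flanks of the ladder
\begin{equation*}
\begin{tikzcd}[column sep=small]
\cdots \arrow[r] & t\mathbb{HFK}^-_p(\Gamma-w) \arrow[r] \arrow[d, "\Phi"] & t\mathbb{HFK}^-_p(\Gamma') \arrow[r] \arrow[d, "\Phi"] & t\mathbb{HFK}^-_p(\Gamma'_{+1}(w)) \arrow[r] \arrow[d, "\Phi"] & \cdots\\
\cdots \arrow[r] & tHFK^-(Y(G-w),K) \arrow[r] & tHFK^-(Y(G'),K) \arrow[r] & tHFK^-(Y(G'_{+1}(w)),K) \arrow[r] & \cdots
\end{tikzcd}
\end{equation*}
obtained by stacking the combinatorial exact sequence of Theorem \ref{latticeexact} over the analytic exact triangle of Theorem \ref{exactanalyticthm} and connecting them by $\Phi$. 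Once the squares commute, the Five Lemma promotes the middle map to an isomorphism. Iterating this step, each time incrementing the framing of $w$ by one while $\Phi_{\Gamma-w}$ remains the constant isomorphism on the left, one eventually recovers $\Gamma$ and concludes that $\Phi_\Gamma$ is itself an isomorphism. The case in which $w$ happens to be adjacent to $v_0$ (so that the exact sequence at $v=w$ is not directly available) is handled by first blowing up $\Gamma$ to an equivalent plumbing $\Gamma''$ representing the same knot, in which the analogue of $w$ sits at greater distance from $v_0$; the invariance of both sides of the desired isomorphism under such moves reduces back to the previous case.

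The main obstacle, and the technical heart of the argument, is establishing commutativity of the squares of the ladder. This amounts to identifying the combinatorial chain maps $A_t=\psi_v$ and $B_t=P_t\circ\psi_e$ constructed in Section \ref{combinatorialexactsequence} with the triangle maps $F_{\bob,\bog}$ and $F_{\bog,\bod}$ underpinning the analytic surgery exact triangle. Concretely, I would model the two surgery cobordisms by a doubly-pointed Heegaard multi-diagram adapted to the plumbing $\Gamma$, and run a $\spinc$-by-$\spinc$ count of pseudo-holomorphic triangles along the lines of Example \ref{lensspaces} and the proof of Theorem \ref{exactanalyticthm}. Under the correspondence $\Char(G)\leftrightarrow\Spinc(X(G))$, this count must recover exactly the explicit algebraic formulae defining $\psi_v$ and $P_t$; verifying this matching is the step where the bulk of the analytic input is absorbed into the combinatorial framework, and is what ultimately justifies the inductive Five Lemma argument above.
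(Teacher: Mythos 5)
Your overall scheme --- ladder of exact sequences relating Theorem \ref{latticeexact} and Theorem \ref{exactanalyticthm}, vertical arrows given by $\Phi$, and an induction driving the framing of a single bad vertex from ``large and negative'' back up to its original value --- matches the paper's argument, which follows \cite[Theorem 2.1]{OS20}. However, the precise mechanism you use to close the induction has a genuine gap: you invoke the Five Lemma on the \emph{full} long exact sequences. This is not well-posed, because $\Phi_\Gamma$ is only constructed as a map out of $t\mathbb{HFK}^-_0(\Gamma,\s)$; for $p\neq 0$ the vertical arrows in your ladder do not exist, and there is no analytic $p$-grading to align them against. Moreover, even setting that aside, the Five Lemma would require $\Phi$ to intertwine the Snake-Lemma connecting homomorphism $\delta: t\mathbb{HFK}_p(\Gamma_{+1}(v))\to t\mathbb{HFK}_{p-1}(\Gamma)$ with the third triangle map on the analytic side, which is a harder commutativity statement than the ones you sketch.

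The paper circumvents both problems by reducing to \emph{short} exact sequences at lattice grading zero before any diagram chase. On the combinatorial side this rests on the vanishing $t\mathbb{HFK}_1(\Gamma_{+1}(v))=0$, which comes from Corollary \ref{badpoints}. On the analytic side it rests on showing the map $tHFK^-(Y(G),K)\to tHFK^-(Y(G_{+1}(v)),K')$ is surjective: one localises to $tHFK^\infty \cong HF^\infty\otimes\mathcal{R}_*\cong\mathcal{R}_*$ and observes that the triangle map $tHFK^\infty(Y(G_{+1}(v)),K')\to tHFK^\infty(Y(G-v),K_0)$ vanishes because the underlying two-handle cobordism is \emph{indefinite}. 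This localisation argument is the analytic input your proposal is missing; without it you have no way to promote the exact triangle to a short exact sequence and hence no way to run a Short Five Lemma purely at $p=0$. Finally, the statement concerns $t\mathbb{HFK}^-_*(\Gamma)$ and not merely $t\mathbb{HFK}^-_0(\Gamma)$, so you should also record explicitly (again from Corollary \ref{badpoints}) that for at most one bad point $t\mathbb{HFK}^-_p(\Gamma)=0$ for $p>0$.
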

\begin{proof} 
Putting the pieces together we get a commutative diagram with exact rows: 
\[\xymatrix{
& tHFK^-(Y(G-v), K_0) \ar[r]  &tHFK^-(Y(G), K) \ar[r]  &  tHFK^-(Y(G_{+1}(v)), K') \\ 
&t\mathbb{HFK}^-_0(\Gamma-v) \ar[r] \ar[u]^{\Phi_{\Gamma-v}}&t\mathbb{HFK}^-_0(\Gamma) \ar[r] \ar[u]^{\Phi_{\Gamma}} & t\mathbb{HFK}^-_0(\Gamma_{+1}(v))  \ar[u]^{\Phi_{\Gamma_{+1}(v)}} \\
}\]
as in \cite[Lemma 2.10]{OS20}. Note that the bottom row fits in a short exact sequence 
\begin{equation*} \xymatrix{
 0\ar[r] & t\mathbb{HFK}_0(\Gamma-v) \ar[r] &  t\mathbb{HFK}_0(\Gamma) \ar[r]  & t\mathbb{HFK}_0(\Gamma_{+1}(v))  \ar[r]  &0} \ ,
\end{equation*}  
provided that $t\mathbb{HFK}_1(\Gamma_{+1}(v))=0$. Indeed, one can show that similarly the top row fits in a short exact sequence 
\[0 \to tHFK^-(Y(G-v), K_0) \to tHFK^-(Y(G), K) \to  tHFK^-(Y(G_{+1}(v)), K')\to 0 \ ,\] 
provided that $G_{+1}(v)$ is negative definite and $G-v$ has no bad points. To see this we acknowledge that $tHFK^\infty(Y,K)$, the homology of the localisation $CF_t(Y,K)\otimes \mathcal{R}_*$, agrees with $HF^\infty(Y)\otimes \mathcal{R}_*\simeq \mathcal{R}_*$ and that the map 
\[
\xymatrix{
HF^\infty(Y(G_{+1}(v))) \otimes \mathcal{R}_*  \ar[r]  &  HF^\infty(Y(G-v)) \otimes \mathcal{R}_* \\ 
tHFK^\infty(Y(G_{+1}(v)), K') \ar[r] \ar[u]^{\cong}& tHFK^\infty(Y(G-v), K_0) \ar[u]^{\cong} \\
}
\]
is trivial being the underlying cobordism indefinite. 

With the above said one can run the very same argument of \cite[Theorem 2.1]{OS20} to show that there is an identification  $tHFK^-(Y(G), K) \simeq t\mathbb{HFK}^-_0(\Gamma)$. On the other hand, according to Lemma \ref{badpoints}, for a diagram with at most one bad point we have that $t\mathbb{HFK}^-_p(\Gamma)=0$ for $p>0$. Hence $t\mathbb{HFK}^-_*(\Gamma)=t\mathbb{HFK}^-_0(\Gamma)$, proving the claim.
\end{proof}

\subsection{$\Z/2\Z$-grading and extension to the case of two bad points} We now want to extend our main result to the case of two bad points. To this end we introduce the relative $\Z/2\Z$-grading on \textit{some} of the $t$-modified knot homologies.

First, note that $tHFK^-(Y,K)$ has a natural real-valued relative $\gr_t$-grading 
\[\gr_t(x,y)=\gr_t(x)-\gr_t(y) \ ,\]
characterised by the property that for every pair of intersection points $\x$ and $\y$
\[\gr_t(\x,\y)= \mu(\phi)+t\cdot n_z(\phi)+(2-t) \cdot n_w(\phi) \ , \]
where $\phi\in \pi_2(\x,\y)$ denotes a Whitney disk connecting $\x$ to $\y$. Analogously, in the combinatorial theory $t\mathbb{HFK}_*$ we have a relative $\gr_t$-grading
\begin{align*}
\gr_t(\Box,\Box')&= \dimm(\Box)- \dimm(\Box') \\ 
&+t \cdot (w_k(\Box)-w_k(\Box'))+ (2-t) \cdot (w_{k+2v_0^*}(\Box)-w_{k+2v_0^*}(\Box')) 
\end{align*}

Secondly, recall \cite[Section 3]{OSS4} that if $0\leq t=\frac{m}{n}\leq 2$ is a rational number then in the definition of  $tHFK^-(Y,K)$ the ring $\mathcal{R}$ can be replaced with the ring of polynomials with fractional exponents $\F[q^{1/n}] \subset \mathcal{R}_*$. In this case the relative $\text{gr}_t$-grading is rational valued. More specifically, it takes values in $\Z[\frac{1}{n}] \subset \Q$, the ring of fractions associated to the multiplicative set $S=\{n^k: k\geq 0\}\subset \Z$. Note that the ideals of $\Z[\frac{1}{n}]$ are in one-to-one correspondence with the ideals of $\Z$ that do not meet $S$. In particular if $n$ is \emph{odd} then $\Z[\frac{1}{n}]$ contains a (unique) maximal ideal $\mathfrak{m}$ with quotient field $\Z[\frac{1}{n}]/\mathfrak{m} \simeq \Z/2\Z$. Thus, if we choose $0\leq t\leq 2$ to be of the form $t=m/(2b+1)$ then there is a well defined $(\text{mod }2)$ reduction of the relative $\gr_t$-grading. The equivalence relation: 
\[\x \sim \y \ \  \Longleftrightarrow \ \ \gr_t(\x,\y)=0 \ (\text{mod }2)\]
partitions the generators of $tHFK^-(Y,K)$ into two equivalence classes, and determines a splitting of the chain 
group: $CF_t(Y,K)= CF^\text{even}_t(Y,K) \oplus CF^\text{odd}_t(Y,K)$. Since the differential $\partial_t$ flips the two summands, we have a decomposition of the groups 
\[tHFK^-(Y,K)= tHFK^-_\text{even}(Y,K) \oplus tHFK^-_\text{even}(Y,K) \ . \]
Similarly one has a splitting in the combinatorial theory
\[t\mathbb{HFK}_*(\Gamma)= t\mathbb{HFK}_\text{even}(\Gamma) \oplus t\mathbb{HFK}_\text{odd}(\Gamma) \ . \]

Thirdly, we observe that if we choose $t=m/(2b+1)$ with $m=2a$ \textit{even} then the situation greatly simplifies. In the analytic theory the relative $\Z/2\Z$-grading  $\gr_t(\x,\y)= \mu(\phi) \ (\text{mod }2)$ collapses on the "classical" Maslov grading, while in the combinatorial theory there are identifications:
\[t\mathbb{HFK}_\text{even}(\Gamma)= \bigoplus_{p \text{ even}}  t\mathbb{HFK}_{p}(\Gamma) \ ,   \text{ and } \  
t\mathbb{HFK}_\text{odd}(\Gamma)= \bigoplus_{p \text{ odd}} t\mathbb{HFK}_{p}(\Gamma) \ ,\] 
since $\gr_t(\Box, \Box')=\dimm(\Box)- \dimm(\Box') \ (\text{mod }2)$. Note that in the case when $\Gamma$ has at most one bad point we have a vanishing result for the odd homologies: $tHFK^-_\text{odd}(Y(G), K)\simeq t\mathbb{HFK}^-_\text{odd}(\Gamma)=0$.

\begin{prop}\label{close}Suppose that $0\leq t \leq 2$ is a rational number of the form $t=2a/(2b+1)$. If $\Gamma$ has at most two bad points then there is an isomorphism of $\mathcal{R}$-modules $tHFK^-_\text{even}(Y(G), K) \simeq t\mathbb{HFK}^-_0(\Gamma)$.
\end{prop}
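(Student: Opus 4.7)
The strategy is to bootstrap from the previous proposition by running a five--lemma comparison between the two surgery exact sequences, induction being on the number of bad vertices of $G$. The hypothesis that $t=2a/(2b+1)$ is essential precisely because in this range the relative $\gr_t$--grading modulo $2$ agrees with the parity of the classical Maslov (resp.\ combinatorial dimension) grading, so the even/odd decomposition is a genuine splitting of chain complexes and is preserved by all holomorphic polygon maps appearing below.

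Concretely, pick a bad vertex $v$ of $G=\Gamma-v_0$ which is not directly connected to $v_0$; removing $v$ or raising its weight strictly decreases the number of bad vertices, so both $G-v$ and $G_{+1}(v)$ are negative--definite plumbings with at most one bad vertex. Applying Corollary~\ref{badpoints} to $\Gamma-v$ and $\Gamma_{+1}(v)$ yields $t\mathbb{HFK}_p(\Gamma-v)=t\mathbb{HFK}_p(\Gamma_{+1}(v))=0$ for $p\geq 1$; substituting into Theorem~\ref{latticeexact} collapses the lattice long exact sequence into the vanishing $t\mathbb{HFK}_1(\Gamma)=0$ together with a short exact sequence
\begin{equation*}
0 \to t\mathbb{HFK}_0(\Gamma-v) \xrightarrow{\phi_*} t\mathbb{HFK}_0(\Gamma) \xrightarrow{\psi_*} t\mathbb{HFK}_0(\Gamma_{+1}(v)) \to 0.
\end{equation*}
In particular $t\mathbb{HFK}^-_{\text{even}}(\Gamma)=t\mathbb{HFK}^-_0(\Gamma)$ for type--$2$ plumbings.

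On the analytic side, Theorem~\ref{exactanalyticthm} applied to the knot $C\subset Y(G-v)$ corresponding to $v$ produces an exact triangle linking $tHFK^-$ of $(Y(G-v),K_0)$, $(Y(G),K)$ and $(Y(G_{+1}(v)),K')$. The same indefiniteness/$tHFK^\infty$ argument used in the proof of the preceding proposition -- namely that $tHFK^\infty\simeq HF^\infty\otimes \mathcal{R}_*$ and the cobordism map between $HF^\infty(Y(G-v))$ and $HF^\infty(Y(G_{+1}(v)))$ vanishes because the underlying four-manifold is indefinite -- splits this triangle into a short exact sequence. By the inductive hypothesis both outer analytic groups are concentrated in the even parity, so $tHFK^-_{\text{odd}}(Y(G),K)=0$ as well and the whole analytic short exact sequence is supported in the even summand.

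Assembling the two short exact sequences into a commutative ladder with vertical arrows $\Phi_{\Gamma-v}$, $\Phi_\Gamma$, $\Phi_{\Gamma_{+1}(v)}$ constructed in the previous section, the five lemma delivers the desired isomorphism $\Phi_\Gamma\colon t\mathbb{HFK}^-_0(\Gamma)\xrightarrow{\cong} tHFK^-_{\text{even}}(Y(G),K)$. The main obstacle is verifying commutativity of this ladder -- that is, that the maps $\Phi$ intertwine the combinatorial maps $\phi_*,\psi_*$ of Theorem~\ref{latticeexact} with the holomorphic--triangle maps appearing in Theorem~\ref{exactanalyticthm}; this requires a pseudo--holomorphic degeneration argument identifying the combinatorial surgery maps with the corresponding analytic cobordism maps, exactly in the spirit of \cite{OSGraphManifolds}. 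A secondary technical point is handling the edge case in which every bad vertex of $G$ is adjacent to $v_0$: one then replaces $\Gamma$ by $\Gamma'(v)$ (the blow--up adding a $(-1)$--framed vertex at $v$) before applying the exact triangle, which does not alter the isomorphism types of the groups at issue.
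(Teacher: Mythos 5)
Your overall strategy --- a commutative ladder comparing the analytic and combinatorial surgery triangles and a five--lemma argument, with the parity constraint on $t$ used to split into even/odd summands --- matches the paper's. However, there is a central error in the combinatorial bootstrapping that the paper does not make.

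You claim that \emph{raising the weight of a bad vertex strictly decreases the number of bad vertices}, so that $G_{+1}(v)$ has at most one bad vertex. This is false. The bad condition is $\deg(v) > -v^2$; replacing $v^2$ by $v^2+1$ replaces the right--hand side by $-v^2-1$, making the inequality \emph{easier} to satisfy. So raising the weight of a bad vertex keeps it bad, and $G_{+1}(v)$ has the same two bad vertices as $G$. Everything you deduce from this --- namely $t\mathbb{HFK}_p(\Gamma_{+1}(v))=0$ for $p\geq 1$, the collapse of the lattice long exact sequence, the vanishing $t\mathbb{HFK}_1(\Gamma)=0$ for type--$2$ plumbings, and the vanishing of $tHFK^-_\text{odd}$ for both $Y(G_{+1}(v))$ and $Y(G)$ --- is consequently unjustified. (Note that $t\mathbb{HFK}_1(\Gamma)=0$ cannot hold in general for type--$2$ graphs; were it true, the $\Z/2\Z$--grading apparatus introduced precisely for this case would be superfluous, and Corollary~\ref{badpoints} would already be giving type--$1$ information.) The paper instead appeals to the more delicate induction of \cite[Theorem 2.2]{OS20}, in which the quantity that decreases is not the number of bad vertices, and the analytic top row is \emph{not} asserted to be a short exact sequence supported in even degree. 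The device that makes the diagram chase go through is the observation that the connecting triangle map $tHFK^-(Y(G_{+1}(v)),K')\to tHFK^-(Y(G-v),K_0)$ flips the relative $\Z/2\Z$--grading and lands in the odd part of a group that is type--$1$ and hence has no odd part; this gives surjectivity at the right end of the top row even though odd homology of the middle term may be nonzero. Your last paragraph flags the adjacency--to--$v_0$ issue and the need to identify the combinatorial maps with cobordism maps --- both legitimate points also implicitly deferred in the paper --- but the induction as you set it up does not close.
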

\begin{proof} 
If we choose $v$ to be one of the two bad points we get a commutative diagram with exact rows: 
\[\xymatrix{
tHFK^-_\text{even}(Y(G-v), K_0) \ar[r]  &tHFK^-_\text{even}(Y(G), K) \ar[r]  &  tHFK^-_\text{even}(Y(G_{+1}(v)), K') \\ t\mathbb{HFK}^-_0(\Gamma-v) \ar[r] \ar[u]^{\Phi_{\Gamma-v}}&t\mathbb{HFK}^-_0(\Gamma) \ar[r] \ar[u]^{\Phi_{\Gamma}} & t\mathbb{HFK}^-_0(\Gamma_{+1}(v))  \ar[u]^{\Phi_{\Gamma_{+1}(v)}} \\
}\]
Again, we have that the bottom row fits in a short exact sequence 
\begin{equation*} \xymatrix{
 0\ar[r] & t\mathbb{HFK}_0(\Gamma-v) \ar[r] &  t\mathbb{HFK}_0(\Gamma) \ar[r]  & t\mathbb{HFK}_0(\Gamma_{+1}(v))  \ar[r]  &0} \ .
\end{equation*}  
Furthermore the last map on the top row is surjective since the triangle map 
\[tHFK^-(Y(G_{+1}(v)), K')\to tHFK^-(Y(G-v), K_0)\] 
flips the relative Maslov grading and $tHFK^-_\text{even}(Y(G-v), K_0)=0$. Like in \cite[Theorem 2.2]{OS20} this information is sufficient to conclude that the right-most map should be an isomorphism provided that the other two are. 
\end{proof}

\begin{proof}[Proof of Theorem \ref{maingoal}]
First suppose that $0\leq t \leq 2$ is a rational number of the form $t=2a/2b+1$. According to the computations of the degree shifts of the triangle counting maps performed by Zemke in \cite{zemkegradings} the isomorphism $tHFK^-_\text{even}(Y(G),K)\simeq  t\mathbb{HFK}_0(\Gamma)$ of Proposition \ref{close} preserves the $\gr_t$-grading. Since non-torsion elements are concentrated in even gradings we have that $\Upsilon_{K,\s}(t)=\Upsilon_{\Gamma, \s}(t)$ for all values of the parameter $t$ in 
\[C=\left\{\frac{2a}{2b+1} \ : \ a, b\in \Z_{\geq 0} \right\}  \cap [0,2]\ . \]
On the other hand $C$ is dense in $[0,2]$, the upsilon function is continuous \cite[Proposition 1.4]{OSS4},  and two continuous functions  that agree on a dense set agree everywhere.  
\end{proof}

\bibliography{Bibliotesi}
\bibliographystyle{siam}

\end{document}